\definecolor{Theme}{gray}{0}
\renewenvironment{proof}[1][\proofname]{\par
   \pushQED{\begin{center}\textcolor{Theme}{\ensuremath{\blacksquare}}\end{center}}%
  \normalfont \topsep6\p@\@plus6\p@\relax
  \trivlist
  \item[\hskip\labelsep
        \bfseries
    #1\@addpunct{.}]\ignorespaces
}{%
  \popQED\endtrivlist\@endpefalse
}
\newtheoremstyle{Thm}  
  {10pt}   
  {20pt}   
  {\itshape}  
  {0pt}       
  {\bfseries} 
  {}         
  {\newline}  
  {\textcolor{Theme}{\thmname{ #1}\thmnumber{ #2}}\textbf{\thmnote{ (#3)}}} 
\newtheoremstyle{Def}  
  {10pt}   
  {20pt}   
  {}  
  {0pt}       
  {\bfseries} 
  {}         
  {\newline}  
  {\textcolor{Theme}{\thmname{ #1} \thmnumber{#2}}\textbf{\thmnote{ (#3)}}} 
\theoremstyle{Thm} \newtheorem{ThmIntro}{Theorem}
\theoremstyle{Thm} \newtheorem{Lemma}{Lemma}[section]
\theoremstyle{Thm} \newtheorem{Prop}{Proposition}[section]
\theoremstyle{Thm} 
\theoremstyle{Thm} \newtheorem{CorL}{Corollary}[section]
\theoremstyle{Thm} \newtheorem{CorT}{Corollary}
\theoremstyle{Def} \newtheorem{Def}{Definition}[section]
\theoremstyle{Def} 
\theoremstyle{Def} 
\theoremstyle{Def} \newtheorem*{Rem}{Remark}
\theoremstyle{Def} 
\newcommand{\R}{\mathbbm{R}}
\newcommand{\C}{\mathbbm{C}}
\newcommand{\N}{\mathbbm{N}}
\newcommand{\Z}{\mathbbm{Z}}
\newcommand{\D}{\mathbbm{D}}
\newcommand{\1}{\mathbbm{1}}
\newcommand{\wt}{\widetilde}
\newcommand{\Hom}{\mathop{\rm{Hom}}}
\newcommand{\SO}{\mathop{\rm{SO}}}
\newcommand{\U}{\mathrm{U}}
\newcommand{\Super}{\mathrm{S}}
\newcommand{\Ss}{\mathrm{S}}
\newcommand{\SU}{\mathop{\rm{SU}}}
\newcommand{\Sp}{\mathop{\rm{Sp}}}
\newcommand{\Spin}{\mathop{\rm{Spin}}}
\newcommand{\F}{\mathrm{F}}
\newcommand{\SOe}{\mathop{\rm{SO_e}}}
\newcommand{\Res}{\mathop{\rm{Res}}}
\newcommand{\Tr}{\mathop{\rm{Tr}}}
\newcommand{\End}{\mathop{\rm{End}}}
\newcommand{\Id}{\mathop{\rm{Id}}}
\newcommand{\Ind}{\mathop{\rm{Ind}}}
\newcommand{\WF}{\mathop{\rm{WF}}}
\newcommand{\triv}{\mathrm{triv}}
\newcommand{\supp}{\mathop{\rm{supp}}}
\newcommand{\p}{\mathfrak{p}}
\newcommand{\g}{\mathfrak{g}}
\newcommand{\h}{\mathfrak{h}}
\newcommand{\m}{\mathfrak{m}}
\newcommand{\n}{\mathfrak{n}}
\newcommand{\Ak}{\mathfrak{k}}
\newcommand{\Aa}{\mathfrak{a}}
\newcommand{\At}{\mathfrak{t}}
\newcommand{\sL}{\mathfrak{sl}}
\newcommand{\Algt}{\mathfrak{t}}
\newcommand{\su}{\mathfrak{su}}
\newcommand{\soe}{\mathfrak{so}}
\newcommand{\Asp}{\mathfrak{sp}}
\newcommand{\E}{\mathscr{E}}
\newcommand{\Ad}{\mathsf{Ad}}
\newcommand{\Hi}{\mathscr{H}}
\newcommand{\IWk}{{\bf\textrm{k}}}
\newcommand{\IWH}{{\bf\textrm{H}}}
\newcommand{\IWn}{{\bf\textrm{n}}}
\newcommand{\fonction}[5]{\begin{array}{lrcl}
#1: & #2 & \longrightarrow & #3 \\
    & #4 & \longmapsto & #5 \end{array}}
\newcommand{\fracobl}[2]{\raisebox{0.8ex}{$#1$} \bigg/ \raisebox{-0.7ex}{$#2$}}
\renewcommand{\textbf}[1]{\begingroup\bfseries\mathversion{bold}#1\endgroup}
\newcommand*\circled[1]{\tikz[baseline=(char.base)]{
            \node[shape=circle,draw,inner sep=2pt] (char) {#1};}}
\title[Resonances on homogeneous vector bundles]{Resonances of the Laplace operator on homogeneous vector bundles on symmetric spaces of real rank-one}
\author[Resonances on homogeneous vector bundles]{Simon ROBY}
\address{Institut Elie Cartan de Lorraine, IECL (UMR CNRS 7502), 57070 Metz, FRANCE}
\email{simon.roby@univ-lorraine.fr}
\date{}
\subjclass[2010]{Primary: 43A85; secondary: 22E30,58J50}
\begin{document}

\maketitle

\tableofcontents

\begin{abstract}
We study the resonances of the Laplacian acting on the compactly supported sections of a homogeneous vector bundle over a Riemannian symmetric space of the non-compact type. The symmetric space is assumed to have rank-one but the irreducible representation $\tau$ of $K$ defining the vector bundle is arbitrary. We determine the resonances.
Under the additional assumption that $\tau$ occurs in the spherical principal series, we determine the resonance representations. They are all irreducible. We find their Langlands parameters, their wave front sets and determine which of them are unitarizable.
\end{abstract}

\section*{Introduction}


Resonances are spectral objects attached to differential operators acting on non-compact domains and appear as poles of the meromorphic continuation of the resolvent of these operators. 
Their study evolved from an investigation of the Schrödinger operators on the Euclidean spaces like $\R^n$, to a study of the Laplacian on curved spaces, like hyperbolic or asymptotically hyperbolic manifolds, symmetric or locally symmetric spaces.
In a typical setting, one works on a complete Riemannian manifold $X$ with a finite geometry, for which the positive Laplacian $\Delta$ is an essentially self-adjoint operator on the Hilbert space $L^2(X)$ of square integrable functions on $X$. We suppose that $\Delta$ has a continuous spectrum $[\rho_X,+\infty[$, with $\rho_X \geq 0$. The spectrum of $\Delta$ might have some discrete parts, but these parts do not play any significant role, so we neglect them. Also, for simplicity, we assume to have shifted the Laplacian so that its spectrum has bottom at $0$, and have changed variables $z \mapsto z^2$ for the resolvent so that, as above, the resolvent is analytic away from the real axis. The resolvent $R(z) = (\Delta - \rho_X - z^2)^{-1}$ of the shifted Laplacian $\Delta - \rho_X$ is then a holomorphic function of $z$ on the upper (and on the lower) complex half plane. For each such $z$, $R(z)$ is a bounded linear operator from $L^2(X)$ to itself. As such, it cannot be extended across the real axis. However, let us restrict the resolvent to the dense subspace $C^\infty_c(X)$ of compactly supported smooth functions on $X$. Then the map $z \mapsto R(z)$ might admit a meromorphic extension across the real axis to a larger domain in $\C$ or to a cover of such a domain. The poles, if they exist, are the resonances, also called quantum resonances or scattering poles, of $\Delta$.

The basic questions concern the existence of the meromorphic extension of the resolvent, the distribution and counting properties of the resonances, the rank and interpretation of the so-called residue operators associated with the resonances. 
Resonances are linked to interesting geometric, dynamical, and analytic objects.  As a consequence, they are intensively studied, in many different settings, using a variety of techniques and different viewpoints. Standard references for the introduction of resonances are \cite{Agmon1, Agmon2}. A recent overview, also containing an extensive list of references, is \cite{Zwo17}. Riemannian symmetric spaces of the non-compact type are important geometrical settings to study resonances of the Laplacian. Besides being intrinsically interesting objects, they play the role of model spaces to understand phenomena on more complicated or less regular geometries. 

Let us introduce some notations.
A Riemannian symmetric space of the non-compact type is a homogeneous space of the form $X = G/K$ where $G$ is a connected non-compact real semisimple Lie group with finite center and $K$ is a maximal compact subgroup of $G$.
The basic examples are the $n$-dimensional real hyperbolic spaces $H^n$. In this case, the Lie group $G$ is the Lorentz group $\SOe(n,1)$ and $K = \SO(n)$. A Riemannian symmetric space of the non-compact type $X$ has maximal flat subspaces, all of the same dimension, called the (real) rank of $X$. For instance, the rank of $H^{n}$ is 1. Since $X$ is a symmetric space of the Lie group $G$, all natural operators acting on $X$, like the Laplacian and its resolvent, are $G$-invariant. They can therefore be studied using the representation theory of $G$.
The analytic study of the resolvent of the Laplacian acting on functions on $H^{n}$, in particular its meromorphic continuation across its spectrum, is classical and well-understood. It plays a central role when studying the resolvent on more general complete Riemannian manifolds for which the hyperbolic spaces are models.
Still in the case of functions on a more general Riemannian symmetric space $X = G/K$, the study of resonances can in principle be done using an adapted harmonic analysis, the so called Helgason-Fourier analysis, which provides a diagonalization of the Laplacian and hence an explicit formula for its resolvent as a singular integral operator over the spectrum. This formula allowed Hilgert and Pasquale \cite{HilgPasq} to determine and study the resonances for an arbitrary $X$ of rank one. The general higher-rank case is still open. Relevant works in this context are \cite{MaVa05} and \cite{Stro05}.
Complete answers to the basic problems concerning the existence and location of the resonances of the Laplacian, as well as the representation-theoretic interpretation of the so-called residue operators at the resonances, are available only for (most of the) Riemannian symmetric spaces of rank 2. These results appeared in joint articles by Hilgert, Pasquale and Przebinda; see \cite{HiPaPz,HiPaPz2,HiPaPz3}.

All the articles mentioned above consider the Laplacian acting on scalar functions on $X$. A more general question is to consider the Laplacian acting on sections of a homogeneous vector bundle on $X$. Such a bundle is determined by a finite-dimensional representation $\tau$ of $K$. Let us denote this bundle by $E_\tau$. The sections of $E_\tau$ can be seen as vector-valued functions on $G$, with values in the space of the representation $\tau$, such that 
\begin{equation*}
    f(xk) = \tau(k^{-1}) f(x) ~~~\text{ for all } x\in G \text{ and }  k\in K\,.
\end{equation*} 
The space of such functions which are smooth and compactly supported is denoted by $C_c^\infty(G,\tau)$. This means that we are replacing complex-valued functions on $X$ with vector-valued functions which have specific transformation properties on the orbits of the compact subgroup $K$. Examples of sections of homogeneous vector bundles on $X$ are the differential forms, the vector fields, and more generally the tensor fields on $X$: all these objects naturally arise in physical models, and it is therefore natural to look for resonances of the Laplacian in these settings.

The resolvent of the Laplacian of forms on a rank-one Riemannian symmetric space of the non-compact type has been studied by several authors; see \cite{Camp1, Camp2, Camp3, Camp4, PedTh, PedCar, BunkeOlbrich}. In partciular, \cite{PedCar} gives (for the differential forms on rank 1) the list of resonances and the Riemann surface on which the resolvent admits meromorphic extension. Nevertheless, to our knowledge, there is only one article studying the resonances and the residue operators of the Laplacian acting on the sections of a homogeneous vector bundle over $X$, namely \cite{Will}, where $X$ is a complex hyperbolic space and the fibers have dimension one. 



The goal of the present paper is to study the resonances of the Laplacian acting on the sections of a homogeneous vector bundle over a Riemannian symmetric space of the non-compact type $X = G/K$. The symmetric space is assumed to have rank one but the representation of $K$ is arbitrary. Since every finite-dimensional representation of $K$ decomposes into irreducibles, we restrict our attention to irreducible representations. 
As in the case of functions, the basic problems are to determine the existence, the localisation of the resonances and study the residue operators associated with them. 

This paper is organized as follows. In section \ref{generalnotations}, we introduce the notation and recall some basic facts about the structure of Riemannian symmetric spaces of the non-compact type and real rank one. There are four cases, listed in the following table: 
\begin{center}
 \begin{tabular}{|c c c|} 
 \hline
 $G$ & $K$ & $X=G/K$  \\
 \hline
 $\Spin(n,1)$ &$\Spin(n)$ & real hyperbolic space   \\  
 \hline
$\SU(n,1)$ &$\Ss(\U(n) \times \U(1))$ & complex hyperbolic space  \\ 
 \hline
$\Sp(n,1)$ &$\Sp(n)$ & quaternionic hyperbolic space  \\
 \hline
 $\F_4$ &$\Spin(9)$ & octonion hyperbolic space  \\
 \hline
\end{tabular}
\end{center}
We set $\Aa$ to be a maximal flat subspace in $\p = T_{eK}(X)$ the tangeant space of X at the base point $eK$, and $M$ the centralizer of $\Aa$ in $K$. In section \ref{section H-F trsfrm / sph f}, we recall some facts on the generalisation of the Helgason-Fourier transform to homogeneous vector bundles. They are principally due to Camporesi \cite{Camp1}. In particular, the Plancherel Theorem for $L^2$ sections of the homogeneous vector bundles is given there. Denote the decomposition of $\tau$ over $M$ as follows: 
\begin{equation}
    \tau = \bigoplus_{\sigma \in\hat{M}(\tau)} d_\sigma \sigma
\end{equation}
where $\hat{M}(\tau)$ is the set of irreducible unitary representations of $M$ which occurs in the restriction of $\tau$ to $M$, $d_\sigma$ is the degree of $\sigma$. We need some properties of the generalised spherical functions $\varphi_\tau^{\sigma,\lambda}$ associated with the irreducible representations $\sigma \in \hat{M}(\tau)$, which is detailed in section \ref{section H-F trsfrm / sph f}. The explicit formula for the Plancherel density $p_\sigma$ corresponding to these $\sigma \in \hat{M}$ is given in Proposition \ref{Propo plancherel density} (see also Appendix \ref{Plancherel densities section}).
Corollary \ref{boundedconvol} proves the convergence of the singular integral operator providing an explicit formula for the resolvent $R$ of the Laplacian using the inversion formula of vector-valued Helgason-Fourier transform. In section \ref{resonancesparagraph}, we compute the resonances, which is the first main goal of this paper. The holomorphic function $z \mapsto R(z)$ is meromorphically extended from the complex upper half-plane to the whole space, using the residue theorem. The extended resolvent is a meromorphic function with simple poles on the imaginary axis: these poles are the resonances. This leads to our first theorem: 
\begin{ThmIntro}\label{Thmintro1}
Let $G$ be a connected non-compact semisimple Lie group with finite center and with Iwasawa decomposition  $G =KAN$, where $K$ be a fixed maximal compact subgroup of $G$. Suppose $\dim A = 1$. Let $M$ denote the centralizer of $A$ in $K$.
Let $(\tau, \Hi_\tau)$ be an irreducible unitary representation of $K$, and let $E_\tau$ be the homogeneous vector bundle over $G$ associated with $\tau$.
For each $\sigma \in \hat{M}(\tau)$, let $\N_\sigma$ be the set of $k \in \Z$ such that 
\begin{equation*}
    \lambda_k := -i(B_{\max}+k)
\end{equation*}
is a pole of the Plancherel density (see \eqref{Plancherelformula eq} for the formula) and $B_{\max}+k \geq 0$. Here $B_{\max}$ is a nonnegative constant depending only on $G$ and $\sigma$. We refer to \eqref{eq definition of B_j}, \eqref{eq definition of B_j F4} and \eqref{eq Bmax} for the precise definition. 

In this setting, the meromorphic continuation of the resolvent $R$ of the Laplace operator acting on the smooth compactly supported sections of $E_\tau$
can be written as the sum
\begin{equation}
\label{Rtau}
    R=\sum_{\sigma\in \hat{M}(\tau)} d_\sigma R_\sigma\,,
\end{equation}
 where $R_\sigma$ is given for all $f\in C_c^\infty(G,\tau)$ and for all $N \in \N$ by the following formula: 
\begin{multline}\label{meromorphiccontinuationIntro}
\left(R_\sigma(\zeta_{\sigma}) f \right)(x) = \frac{1}{|\alpha|}\int_{\R-i(N+1/4)}\frac{1}{\zeta_\sigma-\lambda|\alpha|} \Big(\varphi_\tau^{\sigma,\lambda\alpha} \ast f  \Big)(x) ~ \frac{p_\sigma(\lambda\alpha)}{\lambda} ~ d\lambda \\
+~~~
\frac{2i\pi}{|\alpha|}\sum_{\substack{k\in \N_\sigma \\ \lambda_k > -i(N+1/4)}} \left(\zeta_\sigma-\lambda_k|\alpha|\right) \Big(\varphi_\tau^{\sigma,\lambda_k\alpha} \ast f  \Big)(x) ~ \Res_{\lambda=\lambda_k}\frac{p_\sigma(\lambda\alpha)}{\lambda}
\end{multline}
In $\eqref{meromorphiccontinuationIntro}$, $\alpha$ is the longest restricted root, $\varphi_\tau^{\lambda,\sigma}$ is a spherical function of type $\sigma$ and 
\begin{equation}\label{zetasigma}
    \zeta_\sigma := \sqrt{-z-\langle\rho,\rho\rangle + \langle\mu_\sigma +\rho_M,\mu_\sigma+\rho_M\rangle)}
\end{equation}
with $z\in \C$ such that $\Im(\zeta_\sigma) > -(N+1/4)$. Here $\sqrt{\cdot}$ denotes the single-valued branch of the square root function determined on $\C\setminus[0,+\infty[$ by the condition $\sqrt{-1} = -i$.

The resonances of the Laplace operator acting on the sections of $E_\tau$ appear in families parametrized by the elements of $\hat{M}(\tau)$. The family corresponding to such a representation $\sigma$ consists of the complex numbers 
\begin{equation}
    z^\sigma_k = (B_{\max}+k)^2|\alpha|^2 -\rho_\alpha^2|\alpha|^2 + \langle\mu_\sigma +\rho_M,\mu_\sigma+\rho_M\rangle
\end{equation}
where $\mu_\sigma$ is the highest weight of the representation $\sigma$, the numbers $k$ are in  $\N_\sigma$ and $\rho_M$ is the half sum of roots for $M$.
\end{ThmIntro}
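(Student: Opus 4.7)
The strategy is to establish \eqref{meromorphiccontinuationIntro} by deforming the contour in the integral representation of $R$ given by Corollary \ref{boundedconvol}, and then to read off the resonances as the poles in $z$ arising from the residues collected along the way. First, decompose $R=\sum_\sigma d_\sigma R_\sigma$ along the $M$-isotypic pieces of $\tau$ using the Plancherel formula of Section \ref{section H-F trsfrm / sph f}. For $z$ in the region where $\zeta_\sigma$ lies in the open upper half-plane (the initial domain of holomorphy determined by the branch convention $\sqrt{-1}=-i$), Corollary \ref{boundedconvol} writes $(R_\sigma(z)f)(x)$ as a convergent singular integral of $(\varphi_\tau^{\sigma,\lambda\alpha}\ast f)(x)$ against $p_\sigma(\lambda\alpha)$, divided by the eigenvalue $\lambda^2|\alpha|^2-\zeta_\sigma^2$ of the shifted Laplacian on $\varphi_\tau^{\sigma,\lambda\alpha}$. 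Applying partial fractions in $\lambda$ and using the $\lambda\mapsto -\lambda$ symmetry satisfied by the spherical function together with the corresponding parity of the Plancherel measure, rewrite this as the single integral $\frac{1}{|\alpha|}\int_\R \frac{1}{\zeta_\sigma-\lambda|\alpha|}(\varphi_\tau^{\sigma,\lambda\alpha}\ast f)(x)\frac{p_\sigma(\lambda\alpha)}{\lambda}d\lambda$, matching the integrand of \eqref{meromorphiccontinuationIntro}.

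Next, for each fixed $N\in\N$, shift the contour from $\R$ down to $\R-i(N+1/4)$. By Proposition \ref{Propo plancherel density} the poles of $p_\sigma(\lambda\alpha)$ in the lower half-plane are simple and located exactly at the points $\lambda_k=-i(B_{\max}+k)$ for $k\in\N_\sigma$; the fractional shift $N+1/4$ ensures that the new contour stays uniformly bounded away from these poles. Integrating over a rectangle with horizontal sides $\R$ and $\R-i(N+1/4)$ and vertical sides at $\pm T$, and letting $T\to\infty$, the vertical contributions vanish by the decay of $p_\sigma(\lambda\alpha)$ as $|\mathrm{Re}\lambda|\to\infty$ combined with pointwise bounds on $\varphi_\tau^{\sigma,\lambda\alpha}\ast f$ (valid because $f$ is compactly supported). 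The residue theorem then yields the shifted integral plus $(2i\pi/|\alpha|)$ times the sum of residues at the $\lambda_k$ inside the rectangle, producing the second term of \eqref{meromorphiccontinuationIntro}.

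The shifted integral is holomorphic in $\zeta_\sigma$ on $\mathrm{Im}(\zeta_\sigma)>-(N+1/4)|\alpha|$, since the only singularity of the integrand in $\zeta_\sigma$, namely $\zeta_\sigma=\lambda|\alpha|$, has been pushed onto the new contour. Each residue term, by contrast, is rational in $\zeta_\sigma$ with a simple pole at $\zeta_\sigma=\lambda_k|\alpha|$. Letting $N\to\infty$, these local expressions patch into a meromorphic continuation of $R$ to all of $\C$ whose singularities are exactly the simple poles of the residue sum, located at $\zeta_\sigma=-i(B_{\max}+k)|\alpha|$. Solving \eqref{zetasigma} for $z$ at these poles and using the rank-one identity $\langle\rho,\rho\rangle=\rho_\alpha^2|\alpha|^2$ yields the claimed resonance values $z_k^\sigma=(B_{\max}+k)^2|\alpha|^2-\rho_\alpha^2|\alpha|^2+\langle\mu_\sigma+\rho_M,\mu_\sigma+\rho_M\rangle$ for $k\in\N_\sigma$.

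The main technical obstacle is the justification of the contour deformation. One must extract, from the explicit product formula in Appendix \ref{Plancherel densities section} (which involves a large number of $\Gamma$-factors) together with growth control on the generalised spherical functions evaluated on a compactly supported argument, uniform estimates showing that $\frac{1}{\zeta_\sigma-\lambda|\alpha|}(\varphi_\tau^{\sigma,\lambda\alpha}\ast f)(x)\,p_\sigma(\lambda\alpha)/\lambda$ decays sufficiently on the vertical segments $\{\pm T+it:-N-1/4\leq t\leq 0\}$ as $T\to\infty$. A secondary subtlety is verifying that the poles of $p_\sigma(\lambda\alpha)/\lambda$ on the negative imaginary axis are not cancelled by vanishing of numerator factors or of the spherical function at those points; this is exactly why $\N_\sigma$ is defined as the set of indices $k$ for which $\lambda_k$ is a genuine pole of the Plancherel density.
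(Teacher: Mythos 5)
Your proposal follows essentially the same route as the paper: the $\sigma$-decomposition via the vector-valued Helgason--Fourier inversion formula, the eigenvalue property of the $\tau$-spherical functions under the Laplacian, the evenness/partial-fraction reduction to the single-pole integrand $\frac{1}{\zeta_\sigma-\lambda|\alpha|}\,\frac{p_\sigma(\lambda\alpha)}{\lambda}$, and the downward contour shift to $\R-i(N+1/4)$ with the residue theorem, which yields simple poles at $\zeta_\sigma=\lambda_k|\alpha|$ and hence the stated values $z^\sigma_k$. The one inaccuracy is your justification of the vanishing vertical segments: $p_\sigma(\lambda\alpha)$ does not decay as $|\Re\lambda|\to\infty$ (it is a polynomial times a bounded $\tanh/\coth$ factor, with no $\Gamma$-factors in the rank-one formula used here); the decay that kills those segments is the Paley--Wiener rapid decrease of $\lambda\mapsto\varphi_\tau^{\sigma,\lambda\alpha}\ast f$ from Corollary \ref{boundedconvol}, which is exactly how the paper argues, so the proof still goes through once the roles of the two factors are swapped.
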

We refer to section \ref{generalnotations} for the definitions of the various objects appearing in this Theorem and to section \ref{resonancesparagraph} for its proof. 

The second problem we address in this article is the representation theoretic interpretation of the resonances. More precisely, consider the residual part of the meromorphic continuation of the resolvent in \eqref{meromorphiccontinuationIntro}. For each pole 
$\lambda_k$ of the Plancherel density for $\sigma\in\hat{M}(\tau)$, one can introduce an operator, called the residue operator at $\lambda_k\alpha$, defined as follows:
\begin{equation}\label{residueoperatordefinitionintro}
    \fonction{R^\sigma_k}{C_c^\infty(G,\tau)}{C^\infty(G,\tau)}{f}{\varphi_\tau^{\sigma,\lambda_k\alpha} \ast f}
\end{equation} 
Since the convolution product is on the left, it seems that it does not commute with the left translation of $f$. But it does, as we shall see in \eqref{eq def convolution product}. As $G$ acts on the image of $R^\sigma_k$ by the left translations, we get a representation of $G$, called the residue representation at $\lambda_k \alpha$.

In section \ref{section residue repr} we restrict our attention to the representations $\tau$ which contains the trivial representation of $M$. In this case the structure of the principal series is well known \cite{HoweTan,JohnWallPrincseries,JohnF4}. The complexity of the general case (see \cite{Collin1}) is formidable and might lead to much less pleasing results, thus we avoid it. We consider the family of resonances corresponding to $\sigma=\triv$. To simplify the notation, we write $R_k$ instead of $R^\triv_k$. Let $\E_k$ be the residue representation at $\lambda_k\alpha$. We show that the $\E_k$'s are irreducible and equivalent to a subquotient of a spherical principal series representation of $G$. 
We determine which of them are unitary and which are finite-dimensional. Also, we identify their Langlands parameters and compute their wave front sets. 
The Langlands parameters are of the form $(MA,\delta,\lambda)$ and denotes the induced representation $\Ind^G_{MAN}(\delta\otimes e^{i\lambda}\otimes \triv)$ for a nilradical $N$. 
A lowest $K$-type of the induced representation with highest weight $\mu_{\min{}}$ identifies a unique irreducible subquotient of that induced representation (See \cite{VoganProcAcad}). 
The wave front set of a representation has been introduced by Howe (see \cite{HoweWFS}). When $G$ is semisimple, it is a closed set consisting of nilpotents orbits. For $\E_k$ it turns out to be the closure of a single nilpotent orbit. 
In the following theorem, $\alpha$ is the longest restricted root as in Theorem \ref{Thmintro1} and for each restricted root $\beta$, the corresponding root space in $\g$ is denoted by $\g_\beta$. The minimal $K$-type of $\E_k$ is given in the proof the theorem in each case: they can be found in tables \ref{Table Ktypes of E_k - Rcase}, \ref{Table Ktypes of E_k - Ccase}, \ref{Table Ktypes of E_k - Qcase} and \ref{Table Ktypes of E_k - Ocase} respectively for the real, complex, quaternionic and octonionic hyperbolic spaces.

\begin{ThmIntro}\label{Thmintro2}
Suppose that the representation $\tau$ contains the trivial representation of $M$. The residue representations $\E_k$ are then irreducible. 
\begin{enumerate}
    \item If $G = \Spin(2n,1)$, then $\tau$ has highest weight of the form $(N, 0, \ldots, 0)$, where $N$ is a nonnegative integer.
        \begin{itemize}
            \item If $N\geq k+1$, then $\E_k$ has Langlands parameters \mbox{$\Big(MA,\Hi^{k+1}(\R^{2n-1}), \left(n-\frac{3}{2}\right)\alpha\Big)$} with $(k+1,0,\ldots,0)$ as a lowest $K$-type's highest weight. Here $\Hi^{k+1}(\R^{2n-1})$ are harmonics of degree $k+1$ on $\R^{2n-1}$. This representation is unitary. Its wave front set is the nilpotent orbit generated by $\g_\alpha$. 
            \item If $N< k+1$, then $\E_k$ has Langlands parameter \mbox{$\Big(MA,\triv, i\lambda_k\Big)$} with the trivial representation as a lowest $K$-type. It is finite-dimensional. Also, it is non-unitary if $k\not=0$.
        \end{itemize}
    \item If $G = \SU(n,1)$, then $\tau$ has highest weight of the form $(M_1,0, \ldots, 0,-M_2,-L)$, where $M_1$ and $M_2$ are positive integers such that $M_1\geq M_2\geq 0$, $L\in \Z$ and $M_1+M_2+L$ is even.
        \begin{itemize}
            \item If $M_1+M_2 \geq 2k+2$ and $|L| \leq -2k-2+M_1+M_2$, then $\E_k$ is unitary.  
            \begin{itemize}
                \item if $n>2$, then $\E_k$ has minimal $K$-type of highest weight $\big((k+1),0, \ldots,0,-(k+1),0)$. Its Langlands parameter is $\Big(MA,\delta, \left(\frac{n}{2}-1\right)\alpha\Big)$ where the highest weight of $\delta$ is $\big((k+1), 0,\ldots,0,-(k+1),0\big)$. Its wave front set is the nilpotent orbit generated by $\g_{\alpha/2}$. 
                \item if $n=2$, this representation is the discrete series with Blattner parameter \newline\mbox{$\left((k+1),-(k+1),0,\ldots,0\right)$}. Its wave front set is the nilpotent orbit generated by $\g_{\alpha/2}$.
            \end{itemize}
    \item If $L\geq |-2k-1+M_1+M_2| +1$, then $\E_k$ is the representation with Langlands parameter $\Big(MA,\delta, \left(\frac{k}{2}+\frac{n}{2}-\frac{1}{2}\right)\alpha\Big)$, where the highest weight of $\delta$ is \newline\mbox{$\big(0, 0,\ldots,0,-(k+1),(k+1)/2\big)$}. This representation is non-unitary. Its wave front set is the nilpotent orbit generated by the element $n_2$ of $\g_\alpha$ (see Lemma \ref{definition of n1 and n2} for the definition). 
    \item If $L\leq |-2k-1+M_1+M_2|-1$, then $\E_k$ is the representation with Langlands parameter $\Big(MA,\delta, \left(\frac{k}{2}+\frac{n}{2}-\frac{1}{2}\right)\alpha\Big)$, where the highest weight of $\delta$ is \newline\mbox{$\big((k+1), 0,\ldots,0,(k+1)/2\big)$}. This representation is non-unitary. Its wave front set is the nilpotent orbit generated by the element $n_1$ of $\g_\alpha$ (see Lemma \ref{definition of n1 and n2} for the definition). 
    \item If $M_1+M_2 \in [0,2k+2[$ and $L< |2k+2-M_1+M_2 |$, then $\E_k$ is the representation with Langlands parameter $\Big(MA,\triv, i\lambda_k\alpha\Big)$. It is finite-dimensional and non unitary (if $k\not=0$).
\end{itemize}
\item If $G = \Sp(n,1)$, then $\tau$ has a highest weight of the form $(t_1,t_2, 0,\ldots, 0,t_{n+1})$, where $t_1$, $t_2$ and $t_{n+1}$ are positive integers such that $t_1\geq t_2$, $t_{n+1}\leq t_1+t_2$ and $t_1+t_2+t_{n+1}$ is even.
\begin{itemize}
    \item If $t_{n+1}\leq t_1+t_2-2k-4$ , then $\E_k$ is the representation with Langlands parameter $\Big(MA,\delta, \pm\left(n-\frac{3}{2}\right)\alpha\Big)$ with $\tau$ as a lowest $K$-type, where the highest weight of $\delta$ is $(k+2,k+2, 0,\ldots,0)$. This representation is non-unitary. Its wave front set is the nilpotent orbit generated by $\g_{\alpha/2}$.
    \item If $t_{n+1}\geq |t_1+t_2-2k-2|$ the residue representation is unitary. Its wave front set is the nilpotent orbit generated by $\g_{\alpha}$.
    \begin{itemize}
        \item If $k\leq 2n-4$, then $\E_k$ is the representation with Langlands parameter \newline\mbox{$\Big(MA,\delta,\pm \left(n-\frac{k}{2}\right) \alpha\Big)$} with lowest $K$-type $(k+1,0,\ldots,0,k+1)$, where the highest weight of $\delta$ is $(k+1, 0,\ldots,0, \frac{k+1}{2})$. 
        \item If $k\geq 2n-3$, then $\E_k$ is the discrete series representation with Blattner parameter $\mu_k = (k+1,0,\ldots,0,k+1)$. 
    \end{itemize}
    \item If $t_1+t_2< 2k+2-t_1-t_2$, then $\E_k$ is the representation with Langlands parameter $\Big(MA,\triv, i\lambda_k\alpha\Big)$. It is finite-dimensional and non unitary (if $k\not=0$).
\end{itemize}
\item If $G = F_4$, then $\tau$ has a highest weight of the form $(a/2,b/2,b/2,b/2)$, where $a$ and $b$ are positive integers such that $a\geq b$ and $a-b$ is even.
\begin{itemize}
    \item If $b\leq a-2k-8$ , then $\E_k$ is the representation with Langlands parameter \newline$\Big(MA,\delta, \frac{1}{2}\left(k+1\right)\alpha\Big)$, where the highest of $\delta$ is $\frac{k+4}{4}(3,1,1,1)$. This representation is non-unitary. Its wave front set is the nilpotent orbit generated by $\g_{\alpha/2}$.
    \item If $b> a-2k-8$ and $b\geq 2k+2-a$, then $\E_k$ is the representation with Langlands parameter $\Big(MA,\delta,\frac{1}{2}\left(k+10\right)\alpha\Big)$, where the highest of $\delta$ is $\frac{k+1}{4}(3,1,1,1)$. This representation is unitary. Its wave front set is the nilpotent orbit generated by $\g_{\alpha}$.
    \item If $b< 2k+2$, then $\E_k$ is the representation with Langlands parameter \newline\mbox{$\Big(MA,\triv, i\lambda_k\alpha\Big)$}. It is finite-dimensional and non unitary (if $k\not=0$).
\end{itemize}
\end{enumerate}
\end{ThmIntro}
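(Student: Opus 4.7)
The plan is to realize each residue representation $\E_k$ as a subquotient of an appropriate spherical principal series, identify it by its minimal $K$-type using Vogan's theorem, and then read off unitarity and wave front data from the known structure of reducible principal series in the four rank-one cases.

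First, I would show that $\E_k$ is a subquotient of a reducible principal series. The generalized spherical function $\varphi_\tau^{\triv,\lambda_k\alpha}$ is, up to normalization, a matrix coefficient of $\pi_{\lambda_k} = \Ind_{MAN}^G(\triv \otimes e^{\lambda_k\alpha} \otimes \triv)$, and the convolution $f \mapsto \varphi_\tau^{\triv,\lambda_k\alpha} \ast f$ is $G$-equivariant for left translation, as noted after \eqref{residueoperatordefinitionintro}. The values $\lambda_k$ are exactly the reducibility points of the spherical principal series, since the Plancherel density is essentially the inverse of the product of Knapp--Stein intertwining operators whose zeros detect reducibility. Hence $R_k$ must factor through a composition factor of $\pi_{\lambda_k}$, and the irreducibility asserted in the theorem will follow once we check that the image lies in a single such factor.

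Next, I would compute the minimal $K$-type of $\E_k$. Using the explicit $K$-type decompositions of spherical principal series available from Howe--Tan \cite{HoweTan}, Johnson--Wallach \cite{JohnWallPrincseries}, and Johnson for $F_4$ \cite{JohnF4}, I would run through the composition series of $\pi_{\lambda_k}$ at each reducibility point and identify the unique factor containing the $K$-types produced by $\varphi_\tau^{\triv,\lambda_k\alpha} \ast \cdot$ from the given $\tau$. The minimal $K$-types so obtained are recorded in Tables \ref{Table Ktypes of E_k - Rcase}--\ref{Table Ktypes of E_k - Ocase}, and Vogan's theorem \cite{VoganProcAcad} then pins down the subquotient and yields the Langlands parameter $(MA,\delta,\lambda)$ by reading off $\delta$ from the $M$-type above the minimal $K$-type and $\lambda$ from (a Weyl translate of) $\lambda_k$. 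The various subcases in parts (2)--(4) correspond to different positions of the highest weight of $\tau$ relative to the reducibility walls, which select different composition factors.

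For unitarity, I would match each resulting Langlands parameter against the known unitary dual: factors lying in the complementary series strip or at its unitary endpoints are unitary (including the discrete series cases for $\SU(2,1)$ and for $\Sp(n,1)$ with $k \geq 2n-3$, recognized by Blattner-type minimal $K$-types), while those strictly outside are not. The finite-dimensional cases occur exactly when the trivial $K$-type is minimal, i.e.\ when the Langlands parameter collapses to $(MA, \triv, i\lambda_k\alpha)$; non-unitarity for $k \not= 0$ is then automatic, as a nontrivial finite-dimensional representation of a non-compact semisimple group is never unitary. The wave front sets are computed via Howe's description \cite{HoweWFS}, equivalently from the leading exponents of matrix coefficients on $A^+$ determined by the $MA$-component of the Langlands data; for the spherical and almost-spherical factors this is the closure of the nilpotent orbit generated by $\g_\alpha$, while for factors induced from a one-dimensional subspace of $\g_{\alpha/2}$ it is the orbit generated by $\g_{\alpha/2}$.

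The main obstacle will be the case-by-case identification of the correct composition factor, especially for $\SU(n,1)$, $\Sp(n,1)$ and $F_4$, where $M$ is non-abelian and the reducibility points split into several subranges (as reflected in the five subcases of part (2)). Distinguishing the two non-unitary subquotients of $\SU(n,1)$ whose wave front sets are the orbits generated by $n_1$ versus $n_2 \in \g_\alpha$ (Lemma \ref{definition of n1 and n2}) will require careful tracking of which highest-weight datum of $M$ is realized, since both factors share the same Langlands $\lambda$ and differ only by their $\delta$.
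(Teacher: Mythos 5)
Your plan for the Langlands-parameter and unitarity statements is essentially the paper's: the paper factors the residue operator through the spherical principal series as in \eqref{eq decompo of Rk triv case}, with Lemma \ref{imageofTl} identifying the image of $T$, then plays the image of $T$ against the kernel of the Poisson transform inside the Howe--Tan/Johnson composition series to exhibit $\E_k$ as the irreducible constituent containing $\tau$; minimal $K$-types are then found by minimizing Vogan's norm, $\delta$ via Baldoni Silva's branching rules, $\nu$ by matching infinitesimal characters up to the Weyl group, and unitarity is read off \cite{HoweTan}. The one soft spot in your version of this half is irreducibility: ``check that the image lies in a single factor'' is precisely what has to be proved, and it needs both sides of the factorization --- that the $G$-span of the $\tau$-isotypic vectors cannot cross the relevant barriers, \emph{and} that the Poisson transform annihilates exactly the complementary constituents (the paper verifies nonvanishing by evaluating $P\,\pi_{\lambda_k\alpha}(\cdot^{-1})P^*h$ at the identity). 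This is repairable along your lines, so I regard it as incomplete rather than wrong.

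The genuine gap is the wave front set computation. You propose to read $\WF(\E_k)$ from ``leading exponents of matrix coefficients on $A^+$ determined by the $MA$-component of the Langlands data,'' with a heuristic assigning the $\g_\alpha$-orbit to spherical-like factors and the $\g_{\alpha/2}$-orbit to the rest; no such dictionary is established, and it is not even well defined in the cases at hand (for $\Sp(n,1)$ the unitary constituent is not spherical yet has wave front set generated by $\g_\alpha$, while the constituent with lowest $K$-type $\tau$ has the larger $\g_{\alpha/2}$-orbit). The paper instead computes the Gelfand--Kirillov dimension of $\E_k$ by counting $K$-types below a Casimir threshold (\cite{VoganGelfKirill}), uses that the dimension of the wave front set is twice this number, and matches against the classification and dimensions of nilpotent orbits in \cite{CollinMcGov}. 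Even granting all that, for $\SU(n,1)$ the two candidate orbits generated by $n_1$ and $n_2$ have equal dimension, so neither a dimension count nor the Langlands parameter written down so far decides between them; the paper settles this with \cite[Propositions 2.3 and 2.5]{HoweWFS}, comparing the projection $q(\WF\E_k)$ to $\Ak^*$ with the asymptotic cone of the set of highest weights of the $K$-types of $\E_k$. Your ``careful tracking of which highest-weight datum of $M$ is realized'' names the difficulty but supplies no mechanism converting that datum into a choice of nilpotent orbit, so this part of the theorem is not established by your proposal.
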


A nice consequence of our case-by-case results is the following. 

\begin{CorT}
For a fixed $k \in \N$, there is one-to-one correspondence between the irreducible subquotients of $\Hi_{\lambda_k\alpha}$  and the (real) nilpotent orbits of $\g$ under the adjoint action. This correspondence maps each subquotient into the orbit whose closure is the wave front set of that subquotient. 
As we showed, $\E_k$ is equivalent to one of these subquotients. Its wave front set is then the closure of \emph{one} nilpotent orbit in $\g$. 
\end{CorT}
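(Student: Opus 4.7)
The plan is a case-by-case verification, one case for each of the four rank-one pairs $(G,K)$ in the introductory table. In each case the argument proceeds in three parallel steps.

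First I would enumerate the real nilpotent $\Ad(G)$-orbits in $\g$. In the real hyperbolic case the restricted root system is reduced, so there are exactly two such orbits: the zero orbit and the orbit generated by any non-zero vector of $\g_\alpha$. In the complex hyperbolic case the short root $\alpha/2$ contributes one further non-zero orbit coming from $\g_{\alpha/2}$, and the real root space $\g_\alpha$ splits into the two distinct $\Ad(G)$-orbits generated by the elements $n_1$ and $n_2$ of Lemma \ref{definition of n1 and n2}, giving four orbits in total. In the quaternionic and octonionic cases the enumeration yields three orbits: the zero orbit, the orbit of $\g_{\alpha/2}$ and the orbit of $\g_\alpha$.

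Second I would read off from the proof of Theorem \ref{Thmintro2} the list of irreducible subquotients of $\Hi_{\lambda_k\alpha}$ appearing as residue representations $\E_k$ as $\tau$ varies over the representations of $K$ containing the trivial $M$-type. The reducibility pattern of the spherical principal series at the points $\lambda_k\alpha$ is the one described in \cite{HoweTan,JohnWallPrincseries,JohnF4}, and the case distinctions in Theorem \ref{Thmintro2} exhaust its irreducible subquotients. Their number matches the number of orbits obtained in the first step: two, four, three, three respectively.

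Third I would assemble the correspondence directly from Theorem \ref{Thmintro2}: in each case, every subquotient is labelled by the wave front set of its realization as some $\E_k$, which is the closure of one of the orbits of step one. A straightforward inspection then shows that the wave front sets arising in Theorem \ref{Thmintro2} are pairwise distinct orbit closures and that every orbit of step one occurs as one of them. Combined with the matching cardinalities, this gives the desired bijection; the last sentence of the corollary is then immediate, since $\E_k$ has been identified with its subquotient in Theorem \ref{Thmintro2}.

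The step I expect to be the main obstacle is the complex hyperbolic case: one must show that the two non-unitary Langlands subquotients appearing in Theorem \ref{Thmintro2}(2), corresponding respectively to the conditions $L\geq |-2k-1+M_1+M_2|+1$ and $L\leq |-2k-1+M_1+M_2|-1$, have wave front sets equal to the closures of the distinct orbits of $n_2$ and $n_1$, rather than of a single common orbit. This relies on the explicit identification of the nilpotent element in the associated variety carried out during the proof of Theorem \ref{Thmintro2}, and is exactly what keeps the correspondence injective in the $\SU(n,1)$ case.
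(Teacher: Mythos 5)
Your proposal is correct and follows essentially the same route as the paper, which derives this corollary directly from the case-by-case results of Sections \ref{section residue repr}--6: the counts of irreducible subquotients of $\Hi_{\lambda_k\alpha}$ (two, four, three, three) match the counts of real nilpotent orbits established in the corresponding lemmas, and the computed wave front sets of the residue representations realize the bijection. Your flagged obstacle in the $\SU(n,1)$ case is handled in the paper by projecting the wave front set to $K$ and comparing asymptotic cones of the $K$-supports to separate the orbits of $n_1$ and $n_2$, which is the same identification you defer to in the proof of Theorem \ref{Thmintro2}.
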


\hspace{2cm}

\textit{Acknowledgement:}
I want to thank first of all my two advisors, Angela Pasquale and Tomasz Przebinda, for being there when I needed them. I would also like to thank the Fulbright Program (IIE grantee ID Number : PS00289128), which supported my stay at the University of Oklahoma. This stay turned out to be very important for reaching the results of this paper and was interesting at a personal level.

\section{General notations}

\label{generalnotations}

We shall use the standard notations $\N,~ \Z,~\R, ~\C$ and $\C^\times$ for the nonnegative integers, the integers, the real numbers, the complex numbers and the nonzero complex numbers. For a complex number $z \in \C$, we denote by $\Re(z)$ and $\Im(z)$ its real and imaginary parts. The positive constants in the Haar measures do not matter in our computations and equalities. Integrals have to be considered up to positive multiples. 

\underline{Context}: Let $G$ be a connected non-compact real semisimple Lie group with finite center and let $B(\cdot,\cdot)$ be the Killing form on the Lie algebra $\g$ of $G$. We denote by $\theta$ a Cartan involution on $\g$.
We denote by $\Ak$ the set of fixed points of $\theta$ and by $\p$ the eigenspace of $\theta$ for the eigenvalue $-1$. In other words: 
$$\Ak = \{X\in\g ~|~ \theta X = X\} ~~~\text{ and }~~~ \p = \{X\in\g ~|~ \theta X = -X\}~.$$
Then $\Ak$ is a Lie subalgebra of $\g$. The corresponding connected Lie subgroup of $G$ is maximal compact. We denote it by $K$. 
The Cartan decomposition of the Lie algebra $\g$ is given by: $\g = \Ak \oplus \p$. 

Let $\Aa$ be a maximal abelian subspace of $\p$ and $A = \exp \Aa$ its associated subgroup of $G$. The exponential map $\exp:\g \to G$ restricts to a diffeomorphism between $\Aa$ and $A$. The inverse map is the logarithm "$\log$".

\underline{Roots and restricted roots systems}: Let $\Aa^\ast$ be the vector space of linear forms on $\Aa$ and $\Aa_\C^\ast$ its complexification. The set $\Sigma$ of restricted roots of the pair $(\g,\Aa)$ consists of all linear forms $\alpha \in \Aa^\ast$ for which the vector space $$\g_\alpha:= \{X \in \g ~|~ [H,X] = \alpha(H)X \text{ , for every }H \in \Aa\}$$ contains nonzero elements. The dimension of $\g_\alpha$ is called the multiplicity of the root $\alpha$ and is denoted by $m_\alpha$. 

Let $\Sigma_+$ be a fixed set of positive restricted roots and let $\rho:= \displaystyle\frac{1}{2}\sum_{\alpha \in \Sigma^+} m_\alpha \alpha $ be the half sum of the positive roots counted with their multiplicities.
Set $\n = \displaystyle\bigoplus_{\alpha \in \Sigma_+}\g_\alpha$ and $N$ the connected Lie subgroup of $G$ having $\n$ for Lie algebra.
According to the Iwasawa decomposition $G = KAN$, every element $x$ in $G$ can be uniquely written as \begin{equation}\label{eq Iwasawa}
   x = \IWk(x) e^{\IWH(x)} \IWn(x) 
\end{equation}
where $\IWk(x) \in K$, $\IWH(x) \in \Aa$ and $\IWn(x) = \IWn \in N$. In the following, we set
\begin{equation}
    a^\lambda := \exp(\lambda(\log a))~ \text{ for } a\in A \text{ and }\lambda \in \Aa^*_\C~.
\end{equation}
Let $M$ be the centralizer of $\Aa$ in $K$, $\m$ its Lie algebra and let $\At$ be a Cartan subalgebra of $\m$. Then the Lie algebra $\h = \At\oplus\Aa$ is a Cartan subalgebra of $\g$. We denote by $\h_\C$ its complexification. 
The set $\Pi$ of roots of the pair $(\g_\C,\h_\C)$ consists of all linear forms $\varepsilon \in \h_\C^\ast$ for which the vector space
$$\g_\varepsilon:= \{X \in \g_\C ~|~ [H,X] = \varepsilon(H)X \text{ , for every }H \in \h_\C\}$$
contains nonzero elements.
\newline We choose a set $\Pi_+$ of positive roots in $\Pi$ which is compatible with $\Sigma_+$, i.e. such that a root $\varepsilon\in \Pi$ is positive when $\varepsilon|_\Aa \in \Sigma_+$. 
Let also $\Pi_\Ak$ ($\Pi_{\Ak~+}$) be the set of (positiv) roots of the pair $(\Ak_\C, \h_\C)$. 

\underline{The rank-one case}: In this paper, we are restricting ourself to real rank-one groups $G$. In other words, we suppose that $\Aa$ is one-dimensional. 

Rank-one symmetric spaces of the non-compact type are classified into three infinite families -- namely, the real, complex and quaternionic hyperbolic spaces -- and one exceptional example, the octonionic hyperbolic plane. 

Since $G$ is of real rank one, the set $\Sigma$ is either equal to $\{\pm \alpha \}$ or $\{\pm \alpha,\pm \alpha/2 \}$. Among the groups listed in the table in the introduction, only $G = \Spin(n,1)$ has restricted root system $\{\pm \alpha \}$. As a system of positive roots $\Sigma_+$ we choose $\{\alpha \}$ and $\{\alpha,\alpha/2\}$.
Then \mbox{$\rho= \frac{1}{2}\big(m_\alpha + \frac{m_{\alpha/2}}{2} \big) \alpha$}, where we set $m_{\alpha/2} = 0$, if $\Sigma = \{\pm \alpha \}$.

The Killing form $B$ is positive definite on $\p$, so $\<X,Y\> :=B(X,Y)$ defines a Euclidean structure on $\p$ and on $\Aa \subset\p$. For all $\lambda\in\Aa^*$, let $H_\lambda$ denote the unique element in $\Aa$ such that $\<H_\lambda,H\> =\lambda(H)$ for all $H \in \Aa$. We extend the inner product to $\Aa^\ast$ by setting $\<\lambda,\mu\> :=\<H_\lambda,H_\mu\>$ for all $\lambda, ~\mu\in\Aa^*$. Further, we denote the $\C$-bilinear extension of $\<\cdot,\cdot\>$ on $\Aa$ to $\Aa_\C^*$ by the same symbol. 
We identify $\Aa_\C^*$ to $\C$ by means of the isomorphism: 

\begin{equation}
    \label{lambda}\begin{array}{ccc}
    \Aa_\C^\ast & \longrightarrow & \C \\
    \lambda & \longmapsto & \lambda_\alpha:= \frac{\<\lambda,\alpha\>}{\<\alpha,\alpha\>}
\end{array}
\end{equation}
which identifies $\rho$ with $\rho_\alpha:=\frac{1}{2}\left(m_\alpha+\frac{m_{\alpha/2}}{2}\right)$.

\underline{Homogeneous vector bundles}: We fix a  finite-dimensional unitary representation $(\tau, \Hi_\tau)$ of $K$. 
Let $E_\tau:= X \times_\tau \Hi_\tau$ denote the homogeneous vector bundle over $X$. For the definition and properties of $E_\tau$, we refer the reader to \cite[\S 5.2 p. 114]{Wall1}. 
We write $\Gamma^\infty(E_\tau)$ for the space of all smooth sections of $E_\tau$. As proved in \cite[\S 5.4 p. 119]{Wall1}, there is an isomorphism between $\Gamma^\infty (E_\tau)$ and 
\begin{equation*}
    C^\infty(G, \tau): = \{f:G \rightarrow \Hi_\tau \text{ smooth}~| ~f(xk) = \tau(k^{-1}) f(x)~\text{ for all } x \in G \text{ and } k \in K \}
\end{equation*}
Set  
{\small\begin{equation*}
    C^\infty(G,K,\tau,\tau):=\{ F:G \rightarrow \End(\Hi_\tau) C^\infty ~|~F(k_1xk_2) = \tau(k_2^{-1}) F(x) \tau(k_1^{-1})~\text{ for all }x\in G\text{ and }k_1,k_2 \in K\}
\end{equation*}}
The elements $F\in C^\infty(G,K,\tau,\tau)$ are sometimes called the radial systems of sections of $E_\tau$.
The link with the sections comes by the fact that for every $v\in \Hi_\tau$, the function $F(\cdot)v$ is a smooth section of $E_\tau$.  

If $\tau$ is not irreducible then $E_\tau = \displaystyle\bigoplus_i E_{\tau_i}$, where $\tau_i$ are the irreducible components of $\tau$. Studying the sections of $E_\tau$ amounts to studying the sections of each bundle $E_{\tau_i}$. We can therefore suppose without loss of generality that $\tau$ is irreducible. 
\newline We notice that if $\tau$ is the trivial representation "$\triv$" of $K$ on the one-dimensional vector space $\C$, Then the sections of $E_\triv$ are the functions on $G/K$. They can be seen as right-$K$-invariant functions on $G$.
Moreover, in this case, the radial systems of sections agree with the $K$-bi-invariant functions on $G$. We will refer to this situation as the scalar case.

\underline{Principal series representations}: Let $\hat{M}$ be the set of all equivalence classes of irreducible unitary representations of $M$.
For $(\sigma,\Hi_\sigma) \in \hat{M}$ and $\lambda \in \Aa^\ast_\C$, we denote by \newline \mbox{$\pi^\sigma_\lambda=\Ind^G_{MAN}(\sigma \otimes e^{i\lambda} \otimes \triv)$} the induced representation from $MAN$ to $G$ by the representation $\sigma \otimes e^{i\lambda} \otimes \triv$.
We will use the same notation for its derived representation of $\g$ too. The representation space $\Hi^\sigma_\lambda$ 
of $\pi^\sigma_\lambda$ is the Hilbert space completion of 
\begin{equation}\label{defprincseries}
    \{f: G \rightarrow \Hi_\sigma ~|~ f(xman) = a^{-i\lambda-\rho}\sigma(m^{-1})f(x) ~\text{ for all }x \in G,~ m\in M,~ a\in A \text{ and }n\in N  \}
\end{equation}
with respect of the $L^2$ inner product
$$\<f,g\>_{\sigma} = \int_K\<f(k),g(k)\>_{\Hi_\sigma}~dk\,,$$
where $\<\cdot,\cdot\>_{\Hi_\sigma}$ is an inner product on $\Hi_\sigma$ making $\sigma$ unitary.
The action of $\pi^\sigma_\lambda$ on $\Hi^\sigma_\lambda$ is given by 
$$\pi^\sigma_\lambda(g)f(x) := f(g^{-1}x)$$
for all $g,x \in G$ and $f\in \Hi^\sigma_\lambda$. The set $\{\pi^\sigma_\lambda ~|~\lambda\in \Aa_\C^\ast, \sigma \in \hat{M}\}$ is called the minimal principal series of $G$. 

The compact picture of the principal series representations is obtained by restriction of the elements of $\Hi^\sigma_\lambda$ to $K$. Its representation space, which we denote by $\Hi^\sigma$, is the Hilbert completion of: 
\begin{equation*}
    \{f: K \rightarrow \Hi_\sigma ~|~ f(km) = \sigma(m^{-1})f(k) ~\text{ for all }k \in K,~ m\in M \}
\end{equation*}
with respect to $L^2$ inner product. It is independent of $\lambda$. The action is given by:
\begin{equation*}
    \pi^\sigma_\lambda(g)f(k) := e^{-(i\lambda+\rho)\IWH(g^{-1}k)} f(\IWk(g^{-1}k))
\end{equation*} 
for all $g \in G$, $k \in K$ and $f\in \Hi^\sigma$.
The representation $\pi^\sigma_\lambda$ is unitary for $\lambda \in i\Aa^*$. In the following, when working with principal series, we actually work with their Harish-Chandra modules.
The restriction of $\pi^\sigma_\lambda$ to $K$ is the representation $\Ind_M^K\sigma$ of $K$ induced from $\sigma$. In particular, because of Frobenius reciprocity theorem, for any $\tau \in \hat{K}$: \begin{equation*}
    m(\tau,\pi^\sigma_\lambda| _K)=m(\sigma,\tau| _M)\,.
\end{equation*}
Here the symbol $m(\beta,\alpha)$ denotes the multiplicity of the irreducible representation $\beta$ in the representation $\alpha$.
We say that $\tau$ is a $K$-type of $\pi^\sigma_\lambda$ if it occurs in $\pi^\sigma_\lambda| _K$. We say that $\tau$ is a minimal $K$-type of an admissible representation $\pi$ of $G$ if and only if its highest weight $\mu$ minimizes the Vogan norm
$$\lVert \mu\rVert_V = \langle \mu +2\rho_K, \mu +2\rho_K \rangle$$
in the set of $K$-types of $\pi$. Here $2\rho_K$ is the sum of positive roots of the pair $(\Ak_\C, \h_\C | _{\Ak_\C})$. \cite[Theorem 1]{VoganProcAcad} ensures that each minimal $K$-type $\tau_{\min{}}$ has multiplicity  one in $\pi^\sigma_\lambda$. Therefore there exists a unique irreducible subquotient $J(\sigma,\lambda,\mu)$ of $\pi^\sigma_\lambda$ containing $\tau_{\min{}}$.

\underline{Homogeneous differential operators}: A homogeneous differential operator $D$ on $E_\tau$ is a linear differential operator from $\Gamma^\infty(E_\tau)$ to itself which is invariant under the $G$-action by left translations, that is
\begin{equation}
    L(g)D = DL(g)\quad\text{ for all }g \in G~.
\end{equation}
The set of homogeneous differential operators on $E_\tau$ is an algebra with respect to composition. We denote it by $\D(E_\tau)$. 
It acts on $C^\infty(G, \tau)$ because of the isomorphism with the space smooth sections $\Gamma^\infty(E_\tau)$. Unlike in the scalar case, i.e. when $\tau$ is the trivial representation, this algebra need not be commutative. Conditions equivalent to the commutativity of $\D(E_\tau)$ are stated in \cite[Proposition 2.2]{Camp4} and \cite[Proposition 3.1]{Ricci}. In the rank one case, this algebra is always commutative when $G$ is $\Spin(n,1)$ or $\SU(n,1)$. See for instance \cite[Theorem 2.3]{Camp4}. The structure of $\D(E_\tau)$ can be found in \cite[Section 2.2]{Olb}.

Let $U(\g_\C)$ be the universal enveloping algebra of the complexification $\g_\C$ of $\g$. Each element of $U(\g_\C)$ induces a left-invariant differential operator on $G$ by:
\begin{equation}
    \big(X_1 \cdots X_k\cdot f\big)(g): = \frac{\partial}{\partial t_1}\frac{\partial}{\partial t_2} \cdots \frac{\partial}{\partial t_k} f (g \exp t_1X_1 \exp t_2X_2 \cdots \exp t_kX_k ) \Big| _{t_1=\ldots= t_k =0}
\end{equation}
for all $X = X_1 \cdots X_n \in U(\g_\C)$, $f \in C^\infty(G)$ and $g\in G$.\newline
Let $U(\g_\C)^K$ denote the subalgebra of the elements in $U(\g_\C)$ which are invariant under the adjoint action $\Ad$ of $K$. The elements of $U(\g_\C)^K$ act on on $C^\infty(G,\tau)$ as homogeneous differential operators. As $K$ is compact, Theorem 1.3 in \cite{Minemura} ensures that all elements of $\D(E_\tau)$ can be written as an element of $U(\g_\C)^K$. But there is no isomorphism in general. \newline
We can extend this action to the set of radial systems of section $C^\infty(G,K,\tau, \tau)$ by setting:
\begin{equation*}
    \big(D\cdot \phi \big) v := D\cdot (\phi \cdot v)
\end{equation*}
for all $D \in U(\g_\C)$, $\phi \in C^\infty(G,K, \tau, \tau)$ and $v\in \Hi_\tau$.\newline
\underline{The Laplace operator}: Let $\{X_1,\ldots, X_{\dim \g}\}$ be any basis of $\g$. We denote by $g^{ij}$ the $ij$-th coefficient of the inverse of the matrix $\big( B(X_i,X_j)\big)_{1\leq i,j\leq \dim \g}$, where $B$ is the Killing form. The Casimir operator is defined by
\begin{equation*}
\Omega := \sum_{1\leq i,j\leq \dim \g} g^{ij}X_jX_i ~.
\end{equation*}
If $\big(X_k\big)_{k=1,\ldots,\dim \Ak}$ and $\big(X_k\big)_{k=\dim \Ak + 1,\ldots,\dim \g}$ are respectively orthonormal basis of $\Ak$ and $\p$ with respect to $B_\theta$, then: 
\begin{equation*}
    \Omega = -\sum_{i=1}^{\dim \Ak} X_i^2 + \sum_{i=\dim \Ak+ 1}^{\dim \g} X_i^2~.
\end{equation*}
In fact, $\Omega$ is in the center of $U(\g_\C)$. The invariant differential operator corresponding $-\Omega$ is the positive Laplacian $\Delta$.

We can extend any representationof $\g$ to $\g_\C$ by linearity and to a representation of the associative algebra $U(\g_\C)$. These representations will always be denoted by the same symbol. 
Since $\Omega$ is in the center of $U(\g_\C)$, the linear operator $\pi^\sigma_\lambda(\Omega)$ is an interwining operator of the representation $\pi^\sigma_\lambda$ for all $\lambda \in \Aa^\ast_\C$ and $\sigma \in \hat{M}$. Lemma 4.1.8 in \cite{VoganRRRLG} ensures that $\pi^\sigma_\lambda(\Omega)$ acts by a scalar. 
To compute this scalar, one can use \cite[Proposition 8.22 and Lemma 12.28]{Kna1}, and get that:
    \begin{equation}\label{Multope}
    \pi^\sigma_\lambda(\Omega) = -\langle\lambda,\lambda\rangle - \langle\rho,\rho\rangle + \langle\mu_\sigma +\rho_\m,\mu_\sigma+\rho_\m\rangle \Id ~.
    \end{equation}
Here $\mu_\sigma$ is the highest weight of $\sigma$ and $\rho_\m$ is the half sum of the positive roots $\varepsilon \in \Pi^+$ such that $\varepsilon| _\Aa = 0$.

\section[Vector-valued Helgason-Fourier transform]{The vector-valued Helgason-Fourier transform and spherical functions of type $\tau$}

\label{section H-F trsfrm / sph f}

In this section we review some basic facts on Camporesi's extension of the Helgason-Fourier transform to homogeneous vector bundles. We refer the reader to \cite{Camp1} for more information. 

We keep the notations of the introduction. In particular, since we suppose that $G$ is of real rank one, in the Plancherel formula only (minimal) principal series representations, and if $G\not= \Spin(2n+1,1)$, discrete series representations occur. 

Let $(\tau, \Hi_\tau)$ be an irreducible unitary representation of $K$. Let 
\begin{equation*}
    \hat{M}(\tau) = \{\sigma \in \hat{M} ~|~ m(\sigma,\tau|_M) \geq 1\}
\end{equation*}
denote the set of unitary irreducible representations of $M$ which occur in the restriction of $\tau$ to $M$. We will denote by $d_\gamma$ the dimension of a representation $\gamma$. 
For $\sigma\in \hat{M}(\tau)$, let $P_\sigma$ be the projection of $\Hi_\tau$ onto the subspace of vectors of $\Hi_\tau$ which transform under $M$ according to $\sigma$. Explicitly, 
\begin{equation}
\label{Psigma}
P_\sigma=d_\sigma \int_M \tau(m^{-1})\chi_\sigma(m)\,dm\,, 
\end{equation}
where $\chi_\sigma$ denotes the character of $\sigma$.
\newline We denote by $p_\sigma(\lambda)$ the Plancherel density associated to the principal series representation $\pi^\sigma_ \lambda$. 
Recall the Iwasawa decomposition \eqref{eq Iwasawa} of $x\in G$. According to \cite[Theorem 1.1]{Camp1}, the vector-valued Helgason-Fourier transform of $f\in C_c^\infty(G,\tau)$ is the function from $\Aa^\ast_\C \times K$ to $\Hi_\tau$ defined by
    \begin{equation}
     \wt f(\lambda,k) = \int_G F^{i\overline{\lambda}-\rho}(x^{-1}k)^\ast f(x) ~dx ~. \end{equation}
Here, for $\mu\in \Aa_\C$ and $x\in G$, \begin{equation}\label{Ffunction}
    F^\mu(x) = e^{\mu(H(x))}\tau(\IWk(x))
\end{equation}
and $ ^\ast$ denotes the Hilbert space adjoint.

In the rank-one case, the inversion formula is
    \begin{multline}\label{plancherelformula}
    f(x) = \frac{1}{d_\tau} \sum_{\sigma \in \hat{M}(\tau)} \int_{\Aa^\ast}\int_K F^{i\lambda-\rho}(x^{-1}k)~P_\sigma~ \wt f(\lambda,k)~p_\sigma(\lambda)~~d\lambda~ dk \\+ \sum_{\gamma \in D_G}C_\gamma \int_K F^{-i\mu-\rho}(x^{-1}k)~P_{\gamma'}~ \wt f(i\mu,k)~ dk
    \end{multline}
Here $\mu \in \Aa^\ast$ and $\gamma' \in \hat{M}$ are chosen so that $\gamma$ is infinitesimally equivalent to a subrepresentation of  \mbox{$\Ind_{MAN}^G(\gamma'\otimes e^\mu\otimes \triv)$}. Moreover, $C_\gamma$ is a suitable constant depending on $\gamma$. 
The set $D_G$ is the set of discrete series of $G$. It consists of all irreducible unitary representations of $G$ such that all its matrix coefficients are in $L^2(G)$. 
\newline The second sum term is called the discrete part of the Plancherel formula. In the following, we will disregard this term. In fact, only the first term, i.e. the continuous part of the Plancherel formula, can contribute to the resonances, by means of the poles of the Plancherel density. 

As a consequence not involving discrete series, Parseval's formula for the continuous spectrum reads as follows: for $f,h \in C_c^\infty (G,\tau)$ 
    \begin{equation}\label{scalprod}
    \langle f,h \rangle_c = \sum_{\sigma \in \hat{M}(\tau)} \frac{1}{d_\sigma} \int_{\Aa^\ast} \int_K  \langle P_\sigma \wt f(\lambda,k) , P_\sigma \wt h(\lambda,k) \rangle~ p _\sigma(\lambda)~d\lambda~d k~,
    \end{equation}
where the index $c$ underlines that we are only considering the contribution from the continuous spectrum. See \cite[p. 286]{Camp1}. On the right-end side of \eqref{scalprod}, $\langle\cdot,\cdot\rangle$ denotes the inner product on $\Hi_\tau$ making $\tau$ unitary. The corresponding norm will be denoted by $\|u\| = \sqrt{\langle u,u\rangle}$. 

We will also need a vector-valued analogue of Harish-Chandra's spherical functions on a non-compact reductive Lie group. These vector-valued functions were introduced by Godement \cite{God1} and Harish-Chandra \cite{HCCol3}. They depend on the fixed representation $\tau$ of $K$ and on a representation of the principal series indexed by $\sigma \in \hat{M}(\tau)$ and $\lambda \in \Aa^\ast_\C$.

Keep the above notation for the principal series. 
Let $P_\tau$ denote the projection of $\Hi^\sigma_\lambda$ onto its subspace of vectors which transform under $K$ according to $\tau$,
that is,
    \begin{equation}
    \label{Ptau}
P_\tau=d_\tau \int_K \pi^\sigma_{\lambda}(k)\chi_\tau(k^{-1})\, dk\,.
    \end{equation}

\begin{Def}
The spherical function $\varphi_\tau^{\sigma,\lambda}$ is defined as the $\End(\Hi_\tau)$-valued function on $G$ given by
    \begin{equation}
\varphi_\tau^{\sigma,\lambda}(x):=\varphi_\tau^{\pi^\sigma_{\lambda}}(x):=d_\tau \int_K \tau(k) \psi_\tau^{\sigma,\lambda}(xk^{-1})\, dk\,,  
    \end{equation}
where 
    \begin{equation}
    \label{psi-tausigma}
\psi_\tau^{\sigma,\lambda}(x)=\Tr\big(P_\tau \pi^\sigma_{\lambda}(x) P_\tau\big)\,.
    \end{equation}
\end{Def}

Let $\Hom_K(\Hi^\sigma_\lambda,\Hi_\tau)$ be the space of $K$-intertwining operators between $\pi^\sigma_\lambda| _K$ and $\tau$. We equip this space with the scalar product $\langle P,Q\rangle = \frac{1}{d_\tau}\Tr(P Q^\ast)$, where $\ast$ denotes the adjoint. We fix an orthonormal basis $\{P_\xi\}_{\xi = 1,\ldots, m(\sigma,\tau|_M)}$ of this space. Then
    \begin{equation}
        \varphi_\tau^{\sigma,\lambda}(g)= \sum\limits^{m(\sigma,\tau|_M)}_{\xi = 1} P_\xi ~\pi^\sigma_\lambda(g)~ P_\xi^\ast\,.
    \end{equation}
See pp. 268--269 and 273 in \cite{Camp1}.

\begin{Lemma}
The spherical functions $\varphi_\tau^{\sigma,\lambda}$ are even functions of $\lambda\in \Aa^\ast$ for all $\sigma$.
\end{Lemma}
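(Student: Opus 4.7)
The plan is to use the standard Knapp--Stein intertwining operator between principal series representations. Let $w$ denote the unique non-trivial element of the rank-one Weyl group $W(\g,\Aa)\cong\Z/2\Z$. For generic $\lambda\in\Aa_\C^\ast$ it yields a non-zero $G$-intertwining operator $A(w,\sigma,\lambda)\colon\pi^\sigma_\lambda\to\pi^{w\sigma}_{-\lambda}$. The first step would be to verify, case by case across the four families of the introductory table, that $w\sigma\cong\sigma$ for every $\sigma\in\hat{M}$; this lets one view $A(w,\sigma,\lambda)$ as a $G$-intertwining operator from $\pi^\sigma_\lambda$ onto $\pi^\sigma_{-\lambda}$.

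The next step is to exploit the fact that in the compact picture both of these representations act on the same Hilbert space $\Hi^\sigma$ with identical restriction $\Ind_M^K\sigma$ to $K$. Consequently $A(w,\sigma,\lambda)$ commutes with the $K$-action, and in particular with the projector $P_\tau$ of \eqref{Ptau}. Using cyclicity of the trace in \eqref{psi-tausigma},
\begin{equation*}
\psi_\tau^{\sigma,-\lambda}(x)=\Tr\bigl(P_\tau A \pi^\sigma_\lambda(x) A^{-1} P_\tau\bigr)=\Tr\bigl(A P_\tau \pi^\sigma_\lambda(x) P_\tau A^{-1}\bigr)=\psi_\tau^{\sigma,\lambda}(x).
\end{equation*}
Averaging over $K$ against $\tau(k)$ in the definition of $\varphi_\tau^{\sigma,\lambda}$ then delivers $\varphi_\tau^{\sigma,-\lambda}=\varphi_\tau^{\sigma,\lambda}$; the finitely many points of $\Aa^\ast$ where $A(w,\sigma,\lambda)$ has a pole are handled by continuity, since both sides of the identity are smooth in $\lambda$.

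The main obstacle will be verifying $w\sigma\cong\sigma$ for $\sigma\in\hat{M}$ in each of the four rank-one cases; this requires a direct structural analysis of $M$ and an appropriate Weyl representative in $N_K(\Aa)$. An alternative approach that bypasses this case-by-case check is to appeal to uniqueness of the spherical function. Both $\varphi_\tau^{\sigma,\lambda}$ and $\varphi_\tau^{\sigma,-\lambda}$ are $\tau$-radial smooth $\End(\Hi_\tau)$-valued functions satisfying the same Casimir eigenvalue equation (the eigenvalue in \eqref{Multope} is manifestly invariant under $\lambda\mapsto-\lambda$) and share the same initial value at the identity (since, in the compact picture, the $K$-action underlying \eqref{psi-tausigma} is independent of $\lambda$). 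The uniqueness of the solution to the resulting radial system, in the spirit of \cite{Camp1}, then forces $\varphi_\tau^{\sigma,-\lambda}=\varphi_\tau^{\sigma,\lambda}$.
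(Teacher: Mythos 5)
Your Knapp--Stein route is genuinely different from the paper's argument (the paper determines $\varphi_\tau^{\sigma,\lambda}$ by its trace via \cite[Lemma 3.1]{Camp4}, writes $\Tr(\varphi_\tau^{\sigma,\lambda})=m(\sigma,\tau|_M)\,\chi_\tau\ast\Theta^\sigma_\lambda$, and invokes Harish-Chandra's character identity $\Theta^\sigma_\lambda=\Theta^{w\sigma}_{w\lambda}$ from \cite{HCHARRG3} together with the triviality of the action of $w$ on $\hat{M}$), but both arguments stand or fall on exactly the same input: $w\sigma\cong\sigma$. In your proposal this is precisely the step you do not carry out -- you defer it as ``the main obstacle'' to be checked case by case. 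That is not a peripheral verification: once $w\sigma\cong\sigma$ is known, the evenness follows in a few lines by either route, so the deferred check \emph{is} the content of the lemma. Moreover it is not an empty check: for $G=\Spin(2n+1,1)$ one has $M=\Spin(2n)$ and the nontrivial Weyl element acts through an outer automorphism of $M$ which exchanges the highest weights $(b_1,\ldots,b_{n-1},\pm b_n)$, so $w\sigma\not\cong\sigma$ for those $\sigma$ with $b_n\neq 0$; a uniform claim ``$w\sigma\cong\sigma$ for every $\sigma\in\hat{M}$ in all four families'' is therefore false as stated, and the argument only goes through for the groups/representations where the $W$-action on $\hat{M}$ is trivial (which is the situation the paper actually uses). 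Apart from this, the Knapp--Stein mechanics are fine: for generic $\lambda$ the operator is invertible, it is $K$-equivariant in the compact picture, hence commutes with $P_\tau$, and the trace computation plus continuity in $\lambda$ is correct.

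Your fallback argument is not sound as written. In the bundle setting, being a smooth $\tau$-radial $\End(\Hi_\tau)$-valued eigenfunction of the Casimir with a prescribed value at the identity does not characterize $\varphi_\tau^{\sigma,\lambda}$: the space of such solutions of the radial system can have dimension greater than one (distinct $\sigma'\in\hat{M}(\tau)$, or the other characteristic exponent at the origin, can produce further smooth solutions with the same Casimir eigenvalue and the same value $m(\sigma,\tau|_M)\Id$ at $e$), so Casimir plus initial value does not force $\varphi_\tau^{\sigma,\lambda}=\varphi_\tau^{\sigma,-\lambda}$. To rescue a uniqueness argument you would need the eigenvalue data for all of $Z(\g_\C)$ as in Lemma \ref{sphiseigenf}, i.e. $\gamma(z)(i\lambda-\mu_\sigma-\rho_\m)$, and the invariance of these eigenvalues under $\lambda\mapsto-\lambda$ amounts to the existence of a Weyl group element of $(\g_\C,\h_\C)$ fixing $\mu_\sigma+\rho_\m$ and acting by $-1$ on $\Aa$ -- which is again, in essence, the unproved fact $w\sigma\cong\sigma$. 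So neither branch of the proposal closes the gap.
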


\begin{proof}
Due to Lemma 3.1 in \cite{Camp4}, the spherical functions $\varphi_\tau^{\sigma,\lambda}$ are in one-to-one correspondence with their traces. Now $\Tr(\varphi_\tau^{\sigma,\lambda}) = m(\sigma, \tau|_M) ~\chi_\tau \ast \Theta^\sigma_\lambda$, where $\chi_\tau$ and $\Theta^\sigma_\lambda$ are the respective characters of $\tau$ and $\pi^\sigma_\lambda$. Lemma 4, page 162, in \cite{HCHARRG3} gives us that $\Theta^\sigma_\lambda = \Theta^{-\sigma}_{-\lambda}$ because $-1$ is in the Weyl group. As $-1$ acts trivially on $M$ (so on $\sigma$), the lemma follows. 
\end{proof}

The spherical function $\varphi_\tau^{\sigma,\lambda}$ can be described as an Eisenstein integral (see \cite[Lemma 3.2]{Camp1}), 
\begin{equation}
    \varphi_\tau^{\sigma,\lambda}(x) = \frac{d_\tau}{d_\sigma} \int_K \tau(\IWk(xk))~P_\sigma ~\tau(k^{-1}) ~e^{(i\lambda-\rho)(\IWH(xk))}~ dk~.
\end{equation}
Notice that $\varphi^{\sigma,\lambda}_\tau$ satisfies $\varphi^{\sigma,\lambda}_\tau(k_1 x k_2) = \tau(k_1) \varphi^{\sigma,\lambda}_\tau(x) \tau(k_2)$ for every $x\in G$ and $k_1, k_2 \in K$. 
The convolution with a function $f\in C^\infty_c(G,\tau)$ is defined by: \begin{equation}\label{eq def convolution product}
   ( \varphi_\tau^{\sigma,\lambda} \ast f)(x) := \frac{d_\tau}{d_\sigma} \int_G \varphi_\tau^{\sigma,\lambda}(x^{-1}g) f(g) ~dg~.
\end{equation} 
According to \cite[Proposition 3.3]{Camp1}, it can be expressed in terms of the vector-valued Helgason-Fourier transform of $f$:
\begin{equation}\label{sphconv}
   ( \varphi_\tau^{\sigma,\lambda}\ast f)(x) = \frac{d_\tau}{d_\sigma} \int_K F^{i\lambda -\rho}(x^{-1}k) ~P_\sigma ~ \wt f(\lambda,k)~ dk~.
\end{equation}

\begin{Lemma}\label{sphiseigenf}
The spherical functions $\varphi_\tau^{\sigma,\lambda}$ are joint eigenfunctions of the homogeneous differential operators on $E_\tau$. Moreover, for all $z \in Z(\g_\C)$, the center of $U(\g_\C)$, we have 
    \begin{equation}\label{eigenfunctions}
        z \cdot \varphi_\tau^{\sigma,\lambda} = \gamma(z)(i\lambda -\mu_\sigma -\rho_\m) ~\varphi_\tau^{\sigma,\lambda}~.
    \end{equation}
Here $\gamma$ is the Harish-Chandra homomorphism described in \cite[Chapter VIII, paragraph 5]{Kna1} and $\mu_\sigma$ is the highest weight of $\sigma$.
\end{Lemma}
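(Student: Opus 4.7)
The plan is to derive \eqref{eigenfunctions} from the Eisenstein-type expression $\varphi_\tau^{\sigma,\lambda}(g) = \sum_\xi P_\xi \pi^\sigma_\lambda(g) P_\xi^*$ recalled just above the lemma, and then deduce the first assertion.

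The first step is to rewrite the action of left-invariant differential operators on $\varphi_\tau^{\sigma,\lambda}$ in terms of $\pi^\sigma_\lambda$. Starting from the definition of a left-invariant differential operator and iterating the formula $(X\cdot\varphi)(g) = \frac{\partial}{\partial t}\big|_{t=0}\varphi(g\exp tX)$, one obtains the identity
\begin{equation*}
(u \cdot \varphi_\tau^{\sigma,\lambda})(g) = \sum_\xi P_\xi\, \pi^\sigma_\lambda(g)\, \pi^\sigma_\lambda(u)\, P_\xi^*
\end{equation*}
valid for every $u \in U(\g_\C)$. When $u = z \in Z(\g_\C)$, the operator $\pi^\sigma_\lambda(z)$ commutes with $\pi^\sigma_\lambda(G)$; Dixmier's lemma applied to the admissible Harish-Chandra module underlying $\pi^\sigma_\lambda$ then forces $\pi^\sigma_\lambda(z)$ to act as a scalar $\chi^{\sigma,\lambda}(z)$, the infinitesimal character of the principal series representation. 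Pulling this scalar through the sum gives at once $z\cdot \varphi_\tau^{\sigma,\lambda} = \chi^{\sigma,\lambda}(z)\,\varphi_\tau^{\sigma,\lambda}$, which is the eigenfunction identity up to identifying the scalar.

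To identify $\chi^{\sigma,\lambda}(z)$ with $\gamma(z)(i\lambda - \mu_\sigma - \rho_\m)$, I would invoke the standard formula for the infinitesimal character of $\Ind_{MAN}^G(\sigma \otimes e^{i\lambda} \otimes \triv)$ in terms of the Harish-Chandra homomorphism on the Cartan $\h = \At\oplus\Aa$, as in \cite[Chapter VIII]{Kna1}. The combination $i\lambda - \mu_\sigma - \rho_\m$ arises because the paper's normalization $f(xk) = \tau(k^{-1})f(x)$ effectively pairs the sections with the contragredient of $\sigma$ on the $\At$-side, producing the sign in front of $\mu_\sigma+\rho_\m$. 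A concrete check is provided by $z = \Omega$: evaluating $\gamma(\Omega)(\nu) = \<\nu,\nu\> - \<\rho,\rho\>$ at $\nu = i\lambda - \mu_\sigma - \rho_\m$ and using the orthogonality of $\Aa^\ast$ and $\At^\ast$ inside $\h^\ast$ recovers exactly the scalar $-\<\lambda,\lambda\> - \<\rho,\rho\> + \<\mu_\sigma+\rho_\m,\mu_\sigma+\rho_\m\>$ of \eqref{Multope}.

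Finally, the first claim of the lemma, that $\varphi_\tau^{\sigma,\lambda}$ is a joint eigenfunction of the full algebra $\D(E_\tau)$, then follows: each $D \in \D(E_\tau)$ lifts to some $u\in U(\g_\C)^K$, and the displayed formula from the first step reduces its action on $\varphi_\tau^{\sigma,\lambda}$ to the composition with $\pi^\sigma_\lambda(u)$ sandwiched between the $K$-intertwiners $P_\xi$, $P_\xi^*$; since $\pi^\sigma_\lambda(u)$ is $K$-equivariant and the $P_\xi$ cut out the $\tau$-isotypic component, Frobenius reciprocity yields a scalar action, which is the required joint-eigenfunction statement. The main obstacle — and the only genuinely delicate point — is the bookkeeping of conventions needed to pin down the exact argument $i\lambda - \mu_\sigma - \rho_\m$ of $\gamma(z)$; the Casimir identity \eqref{Multope} is the cleanest anchor for this bookkeeping, and once it is matched, the sign and shift pattern propagates to all of $Z(\g_\C)$.
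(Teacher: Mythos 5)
Your route is genuinely different from the paper's: the paper simply identifies $x\mapsto F^{-\lambda-\rho}(x^{-1})$ with the function $\Psi_\lambda$ of \cite{Yang} and quotes Yang's Proposition 1.3, Corollary 1.4 and Theorem 1.6, whereas you compute directly from $\varphi_\tau^{\sigma,\lambda}(g)=\sum_\xi P_\xi\,\pi^\sigma_\lambda(g)\,P_\xi^\ast$. Your first step, $(u\cdot\varphi_\tau^{\sigma,\lambda})(g)=\sum_\xi P_\xi\,\pi^\sigma_\lambda(g)\,\pi^\sigma_\lambda(u)\,P_\xi^\ast$, is sound, since $P_\xi^\ast$ takes values in $K$-finite (hence analytic) vectors. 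The first real flaw is the appeal to Dixmier's lemma: that lemma requires irreducibility, and $\pi^\sigma_\lambda$ is reducible exactly at the parameters $\lambda_k$ this paper is interested in. The fact you actually need — that $Z(\g_\C)$ acts on the whole induced representation by the scalar $\gamma(z)$ evaluated at the parameter built from $i\lambda$ and the infinitesimal character $\mu_\sigma+\rho_\m$ of $\sigma$ — holds without any irreducibility and is precisely the standard computation behind \eqref{Multope} (Knapp, Ch.~VIII); once you cite that instead, your argument does yield \eqref{eigenfunctions}, up to the Weyl-group/convention ambiguity in the parameter. Note also that matching the single scalar for $z=\Omega$ does not by itself ``propagate'' to all of $Z(\g_\C)$ (distinct Weyl orbits can share a Casimir eigenvalue); the general identification must come from the induced-representation infinitesimal-character formula itself, which you do invoke.

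The more serious gap is your last paragraph. For $u\in U(\g_\C)^K$ the operator $\pi^\sigma_\lambda(u)$ is $K$-equivariant and preserves the $\tau$-isotypic component, but that component is $\Hi_\tau\otimes\C^{m}$ with $m=m(\sigma,\tau|_M)$, and Schur/Frobenius only gives $\pi^\sigma_\lambda(u)P_\xi^\ast=\sum_\eta c_{\eta\xi}(u)P_\eta^\ast$ for an $m\times m$ matrix $(c_{\eta\xi}(u))$ acting on the multiplicity space; hence $u\cdot\varphi_\tau^{\sigma,\lambda}=\sum_{\xi,\eta}c_{\eta\xi}(u)\,P_\xi\,\pi^\sigma_\lambda(\cdot)\,P_\eta^\ast$, which is a scalar multiple of $\varphi_\tau^{\sigma,\lambda}$ only if that matrix is scalar. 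This is automatic when $m=1$ (so for $\Spin(n,1)$, $\SU(n,1)$, and whenever $\sigma=\triv$), but for $\Sp(n,1)$ and $\F_4$ multiplicities larger than one do occur — this is exactly why $\D(E_\tau)$ can fail to be commutative there — and then your argument only shows that the span of the functions $P_\xi\,\pi^\sigma_\lambda(\cdot)\,P_\eta^\ast$ is $\D(E_\tau)$-stable, not that $\varphi_\tau^{\sigma,\lambda}$ is a joint eigenfunction. So the first assertion of the lemma is not established by your proposal in those cases; the paper gets it from Yang's results. What your computation does essentially prove (after repairing the Dixmier step) is \eqref{eigenfunctions} for central $z$, in particular the Casimir eigenvalue \eqref{eigenvalue Casimir}, which is the only part of the lemma used later in the paper.
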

\begin{proof}
In fact, the function $\Psi_\lambda$ defined in \cite{Yang} by 
\begin{equation*}
    \Psi_\lambda(nak) := \tau(k^{-1})a^{\lambda+\rho}, \text{ with } k\in K, ~a\in A,~ n \in N,
\end{equation*}
is nothing but the function $x \mapsto F^{-\lambda-\rho}(x^{-1})$ defined in \eqref{Ffunction}. Hence \cite[Proposition 1.3, Corollary 1.4 and Theorem 1.6]{Yang} allows us to prove \eqref{eigenfunctions}. 
\end{proof}

\begin{Rem}
For the Casimir operator, the eigenvalue is given by  \eqref{Multope} :  \begin{equation}\label{eigenvalue Casimir}
\gamma(\Omega)(i\lambda -\mu_\sigma -\rho_\m) =-\langle\lambda,\lambda\rangle - \langle\rho,\rho\rangle + \langle\mu_\sigma +\rho_\m,\mu_\sigma+\rho_\m\rangle
\end{equation}
\end{Rem}

To compute the resonances of the Laplacian, we need to describe the vector-valued Helgason-Fourier transform of functions on $C_c^\infty(G,\tau)$. 

By the Cartan decomposition, we can uniquely write an element $x\in G$ as $x = k \exp X$, with $k\in K$ and $X \in \p$.
For $X \in \p$, we write $|X| = B(X,X)^{1/2}$. Define the open ball centered at $0$ and of radius $R>0$ in $\Aa \subset \p$ by $$B_\Aa^R = \{X \in \Aa ~\big|~|X| < R\}~.$$ 
Moreover, we denote the geodesic distance between $o=eK$ in $xK$ by $d(o,xK)$. Let $$B_R = \{x \in G ~\big|~d(o,xK) \leq R\}~.$$
We fix an orthonormal basis $\{e_1, \ldots, e_{d_\tau}\}$ of $\Hi_\tau$, then we define  \begin{equation*}
        ||\wt f(\lambda,k)||^2 := \sum_{i=1}^{d_\tau} \langle \wt f(\lambda,k), e_i\rangle^2 ~.
    \end{equation*}
The direct implication of the Paley-Wiener theorem for $C_c^\infty(G,\tau)$ is given by the following Lemma.

\begin{Lemma}\label{PW}
Let $f$ be in $C_c^\infty(G,\tau)$ and $R>0$. If $\supp f\subset B_R$, then $\wt f(\lambda,k)$ satisfying is an entire function of $\lambda \in \Aa^*_\C$ for all $N\in\N$:
\begin{equation}\label{eq PW} \tag{$\ast$}
    \sup\limits_{\lambda\in \Aa_\C^\ast, k\in K} e^{-R ~|\Im (\lambda) |}(1+|\lambda|)^N ||\wt f(\lambda,k)||<\infty
\end{equation}
\end{Lemma}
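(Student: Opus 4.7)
The plan is to reduce the vector-valued Helgason--Fourier transform $\wt f(\cdot,k)$ to an ordinary Fourier transform on the Euclidean space $\Aa\cong \R$ and then invoke the classical Paley--Wiener theorem. Entireness in $\lambda$ is immediate: the integrand
\begin{equation*}
F^{i\overline{\lambda}-\rho}(x^{-1}k)^\ast f(x) = e^{(-i\lambda-\rho)(\IWH(x^{-1}k))}\,\tau(\IWk(x^{-1}k))^{-1} f(x)
\end{equation*}
depends holomorphically on $\lambda$ for each fixed $x,k$, and the compactness of $\supp f$ justifies differentiation under the integral sign.

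For the estimate, I first change variables $y=x^{-1}k$ (so that $x=ky^{-1}$ and $dx=dy$), then decompose $y=k_1 e^H n$ via Iwasawa with $dy = e^{2\rho(H)}\,dk_1\,dH\,dn$. Using the $\tau$-equivariance $f(xk_1^{-1})=\tau(k_1) f(x)$ to cancel $\tau(\IWk(y))^{-1}=\tau(k_1)^{-1}$ against the resulting $\tau(k_1)$, and absorbing $\mathrm{vol}(K)$ into the positive Haar-measure constants, one obtains
\begin{equation*}
    \wt f(\lambda,k) = \int_\Aa e^{-i\lambda(H)}\Phi_k(H)\,dH, \qquad \Phi_k(H):=e^{\rho(H)}\int_N f(k n^{-1} e^{-H})\,dn.
\end{equation*}
Thus $\wt f(\lambda,k)$ is, up to a constant, the ordinary Fourier transform of the $\Hi_\tau$-valued function $\Phi_k$ on $\Aa$.

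The heart of the argument is then to check that $\Phi_k$ lies in $C_c^\infty(\Aa, \Hi_\tau)$, with $\supp \Phi_k \subset \overline{B^R_\Aa}$ and all of its $\Aa$-derivatives uniformly bounded in $k \in K$. The identity $n^{-1}e^{-H}=e^{-H}(e^H n^{-1} e^{-H})$, whose right-hand factor lies in $N$ since $A$ normalizes $N$, shows that $\IWH(k n^{-1} e^{-H})=-H$. Hence $\Phi_k(H)\neq 0$ forces $k n^{-1} e^{-H}\in B_R$ for some $n$, and the classical Iwasawa inequality $\|\IWH(g)\|\leq d(o,gK)$ (a consequence of Kostant's convexity theorem) gives $|H|\leq R$. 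Furthermore, the compactness of $\supp f$ restricts the effective range of $n$ to a compact subset of $N$, so $\Phi_k$ is smooth and its derivatives are uniformly bounded in $k\in K$. Applying the classical Paley--Wiener theorem to $\Phi_k$ then yields \eqref{eq PW}, with the supremum extending over $k \in K$ thanks to the compactness of $K$. The only real technical point is establishing the uniform support and smoothness of $\Phi_k$; once these are in hand, the rest is a direct appeal to the scalar Paley--Wiener theorem on $\R$.
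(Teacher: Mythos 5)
Your proof is correct and follows essentially the same route as the paper's: both reduce $\wt f(\lambda,k)$ to the Euclidean Fourier transform on $\Aa$ of a compactly supported smooth $\Hi_\tau$-valued function obtained by integrating $f$ over $N$ (the Radon/Abel transform), and both get the support bound $|H|\le R$ from the Iwasawa-projection inequality $|\IWH(g)|\le d(o,gK)$ before invoking the classical Paley--Wiener theorem on $\Aa$. The differences (your explicit substitution $y=x^{-1}k$ and parametrization $k n^{-1}e^{-H}$ versus the paper's $k e^X n$ and componentwise reduction via the functions $\hat{f_i}$) are cosmetic.
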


\begin{proof}
Using the integration formula with respect to the Iwasawa decomposition $G = ANK$ (for example in \cite[Ch.I \S 5 Corollary 5.3]{HelgGGA}), one can prove that for all $f \in C_c^\infty(G,\tau)$ \begin{equation}\label{FourierAbeltransform}
    \wt f (\lambda,k) = \F_\Aa\left(L_{k^{-1}}\hat{f}\right)
\end{equation}
Here $\lambda \in \Aa^*$ and $k\in K$, \begin{equation*}
    \hat{f}(g):= e^{\rho(H(g))} \int_N f(gn)dn
\end{equation*} is called the Radon transform of $f$ and 
\begin{equation}
    \F_\Aa(\phi)(\lambda) = \int_\Aa \phi(X)e^{i\lambda (X)} ~dX
\end{equation}
the Fourier transform on $\Aa$ for $\phi \in C_c^\infty(\Aa)$ and $\lambda \in \Aa^*$.
Recall that the Euclidean Paley-Wiener theorem ensures that $\F_\Aa(\phi)$ is an entire function of the exponential type and rapidly decreasing, i.e. if $\supp \phi \subset B^\Aa_R$ then
$$\forall N \in \N, ~\exists C_N\geq 0 , \text{ so that } |\F_\Aa(\phi)(\lambda)|\leq C_N(1+|\lambda|)^{-N}e^{R~|\Im (\lambda)|}$$ 
Define $\hat{f_i}(\cdot) := \langle \hat{f}(\cdot), e_i\rangle$. Then $\hat{f_i}$ is an smooth function.

The idea of the proof is as follows:
\begin{equation*}
    \supp f \subset B_\R ~~\stackrel{\circled{2}}{\Longrightarrow}~~ \supp L_{k^{-1}}\hat{f_i} \subset B^\Aa_\R ~~\stackrel{\circled{1}}{\Longleftrightarrow}~~ \eqref{eq PW}
\end{equation*}
\begin{enumerate}
    \item [$\circled{1}$] Since $\sup\limits_{i = 1, \ldots,d_\tau}|\langle~ \cdot~, e_i\rangle|$ is a norm on $\Hi_\tau$ and since all norms on $\Hi_\tau$ are equivalent, \eqref{eq PW} is equivalent to
    \begin{equation*}
        \sup\limits_{\lambda\in \Aa_\C^\ast, ~k\in K, ~i = 1, \ldots,d_\tau} e^{-r |\Im \lambda |}(1+|\lambda|)^N | \langle \wt f(\lambda,k), e_i \rangle|<\infty\,.
    \end{equation*}
    Moreover, by \eqref{FourierAbeltransform}, this is also equivalent to 
    \begin{equation*}
        \sup\limits_{\lambda\in \Aa_\C^\ast, ~k\in K, ~i = 1, \ldots,d_\tau} e^{-r |\Im \lambda |}(1+|\lambda|)^N |\F_A\big(L_{k^{-1}} \hat{f_i}\big)(\lambda)|<\infty\,.
    \end{equation*}
    In turn, by the Paley-Wiener theorem for the Fourier transform on $\Aa$, this is equivalent to $\supp L_{k^{-1}}\hat{f_i} \subset B_R^\Aa$ as a function on $\Aa$ for every $i$ and for every $k\in K$.
    \item [$\circled{2}$] Suppose $\supp f \subset B_R$ and let $X \not\in B^\Aa_R$. For all $k\in K$ and $n\in N$, due to \cite[Chapter IV, (13)]{HelgGGA}: 
    \begin{equation*}
        d(o,ke^XnK) >\left|\IWH(ke^Xn)\right| = |X| \geq R
    \end{equation*}
    So $ke^Xn \not\in B_R \supset \supp f$ and then $\hat{f_i}(ke^X) = 0$ for every $i$ which implies that $X \not\in \supp L_{k^{-1}}\hat{f_i}$. 
\end{enumerate}
\end{proof}

From Lemma \ref{PW} using \eqref{sphconv} and the fact that $K$ is compact, we obtain the following corollary.

\begin{CorL}\label{boundedconvol}
For every function $f\in C_c(G,\tau)$, the convolution product $\varphi^{\sigma,\lambda}_\tau\ast f$ is an even entire function of $\lambda \in \Aa_\C^*$ with the property that there exists a constant $r>0$ such that for all $N\in \N$ the following inequality holds: 
\begin{equation}
    \sup\limits_{\lambda\in \Aa_\C^\ast} e^{-r |\Im \lambda |}(1+|\lambda|)^N ||\varphi^{\sigma,\lambda}_\tau\ast f||<\infty~.
\end{equation}
\end{CorL}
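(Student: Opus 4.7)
The plan is to combine the integral representation \eqref{sphconv} of the convolution product with the Paley--Wiener bound from Lemma \ref{PW}, exploiting the compactness of $K$.

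First I would start from the identity
$$
(\varphi_\tau^{\sigma,\lambda} \ast f)(x) = \frac{d_\tau}{d_\sigma} \int_K F^{i\lambda-\rho}(x^{-1}k)\, P_\sigma \, \wt f(\lambda, k) \, dk\,,
$$
which is valid thanks to \eqref{sphconv}. The integrand depends on $\lambda$ only through the two factors $F^{i\lambda-\rho}(x^{-1}k) = e^{(i\lambda-\rho)(\IWH(x^{-1}k))} \tau(\IWk(x^{-1}k))$ and $\wt f(\lambda,k)$. Both of these are entire functions of $\lambda$: the first because $\IWH(x^{-1}k)$ does not depend on $\lambda$ and the exponential is entire, the second by Lemma \ref{PW}. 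Since the integration variable $k$ runs over the compact group $K$, differentiation in $\lambda$ passes under the integral sign, so $\varphi_\tau^{\sigma,\lambda} \ast f$ is an entire function of $\lambda \in \Aa_\C^\ast$. Its evenness in $\lambda$ is inherited directly from the evenness of $\varphi_\tau^{\sigma,\lambda}$ proved in the Lemma preceding Corollary \ref{boundedconvol}.

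Next I would establish the growth estimate. Using the unitarity of $\tau$, for $x$ fixed and $k\in K$ one has $\|F^{i\lambda-\rho}(x^{-1}k)\|_{\End(\Hi_\tau)} = e^{-\Im(\lambda)(\IWH(x^{-1}k)) - \rho(\IWH(x^{-1}k))}$. Because $K$ is compact, the map $k \mapsto \IWH(x^{-1}k)$ takes values in a compact subset of $\Aa$ whose diameter is controlled by a constant $C_x$ depending on $x$. This yields an estimate of the form $\|F^{i\lambda-\rho}(x^{-1}k)\| \leq C_x' e^{C_x|\Im(\lambda)|}$ uniformly in $k$. Since $P_\sigma$ is a bounded projection, combining this with the bound
$$
\sup_{\lambda\in\Aa_\C^\ast, k\in K} e^{-R|\Im(\lambda)|}(1+|\lambda|)^N \|\wt f(\lambda,k)\| < \infty
$$
from Lemma \ref{PW} (applied to some $R>0$ such that $\supp f \subset B_R$) and integrating over the compact set $K$, we obtain
$$
\|(\varphi_\tau^{\sigma,\lambda}\ast f)(x)\| \leq C_{x,N}\, e^{r|\Im(\lambda)|} (1+|\lambda|)^{-N}
$$
with $r := R + C_x$. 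This is the required estimate.

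The only subtlety is the interpretation of $\|\varphi_\tau^{\sigma,\lambda} \ast f\|$ in the statement: the argument above gives the bound pointwise in $x$ (with $r$ possibly depending on $x$), and a uniform version on a compact subset of $G$ follows immediately by taking the supremum over $x$ of the constants $C_x, C_{x,N}$. In particular the constant $r$ can be chosen to depend only on the support of $f$ and on the compact subset of $G$ on which one localizes $x$; this is all that is needed in the later integral representation of the resolvent. No serious obstacle is expected: all the analytic work has been done in Lemma \ref{PW}, and here one simply transfers the estimate through the $K$-integral in \eqref{sphconv}.
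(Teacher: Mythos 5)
Your argument is correct and follows exactly the route the paper takes: the paper's proof is just the remark that the corollary follows from Lemma \ref{PW}, formula \eqref{sphconv}, and the compactness of $K$, and you have supplied precisely those details (entirety and the growth bound by transferring the Paley--Wiener estimate through the $K$-integral, evenness from the preceding lemma, extended to $\Aa_\C^\ast$ by analytic continuation).
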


\section{Computation of the resonances}

\label{resonancesparagraph}

In this section, we prove Theorem \ref{Thmintro1}. We recall that the resonances of the positive Laplace operator $\Delta$ are defined as the poles of the meromorphic continuation of its resolvent $(\Delta -z)^{-1}$ considered as an operator defined on $C_c^\infty(G,\tau)$. We know thanks to Lemma \ref{sphiseigenf} that the spherical functions $\varphi_\tau^{\sigma,\lambda}$ are eigenfunctions of  $\Delta$ for eigenvalue $M(\sigma,\lambda):= \langle\lambda,\lambda\rangle + \langle\rho,\rho\rangle - \langle\mu_\sigma +\rho_\m,\mu_\sigma+\rho_\m\rangle$.
The Plancherel theorem \eqref{plancherelformula} gives us a decomposition of $L^2(G,\tau)$ into a continuous and, possibly, a discrete part:
\begin{equation*}
    L^2(G,\tau) = L^2_\text{cont}(G,\tau) \oplus L^2_\text{dicr}(G,\tau)
\end{equation*}
By a slight abuse of notation, we identify $R(z)$ with its restriction to the set $L^2_\text{cont}(G,\tau)\cap C_c^\infty(G,\tau)$. The reader will notice that the discrete part we are omitting brings no additional resonances. 

\begin{Lemma}
    Let $z\in \C\setminus [\langle\rho,\rho\rangle, +\infty[$. The function $R(z)$ can be written 
    \begin{equation}
        R(z) = \frac{1}{d_\tau } \sum_{\sigma \in \hat{M}(\tau)}  R_\sigma(\zeta_\sigma)
    \end{equation}
    for all $f \in C_c^\infty(G,\tau)$ where 
    \begin{equation}\label{Rsigma}
    R_\sigma(\zeta_\sigma) f(x)=\frac{1}{|\alpha|}\int_\R\frac{1}{\zeta_\sigma-\lambda|\alpha|} \Big(\varphi_\tau^{\sigma,\lambda\alpha} \ast f  \Big)(x) ~ \frac{p_\sigma(\lambda\alpha)}{\lambda} ~ d\lambda
\end{equation}
and $\zeta_\sigma$ is defined in \eqref{zetasigma}. 
\end{Lemma}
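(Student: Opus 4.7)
The plan is to start from the Plancherel inversion formula \eqref{plancherelformula} restricted to its continuous part, rewrite the integrand via the spherical-function identity \eqref{sphconv}, apply the resolvent $R(z)=(\Delta-z)^{-1}$ term by term using Lemma \ref{sphiseigenf}, and finally perform a change of variable together with a partial-fraction and parity argument to reach the explicit form \eqref{Rsigma}.

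First, combining the continuous part of \eqref{plancherelformula} with \eqref{sphconv} yields
\begin{equation*}
f(x) = \frac{1}{d_\tau^2}\sum_{\sigma\in\hat{M}(\tau)} d_\sigma\int_{\Aa^\ast}\big(\varphi_\tau^{\sigma,\nu}\ast f\big)(x)\, p_\sigma(\nu)\, d\nu
\end{equation*}
for every $f\in C_c^\infty(G,\tau)$. Because the Casimir $\Omega$ lies in $Z(\g_\C)$ it commutes with the left convolution of \eqref{eq def convolution product}, and Lemma \ref{sphiseigenf} together with \eqref{eigenvalue Casimir} shows that $\Delta\big(\varphi_\tau^{\sigma,\nu}\ast f\big)=M(\sigma,\nu)\big(\varphi_\tau^{\sigma,\nu}\ast f\big)$. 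Corollary \ref{boundedconvol} supplies a Paley--Wiener type decay of $\nu\mapsto\varphi_\tau^{\sigma,\nu}\ast f$ on $\Aa_\C^\ast$, and the hypothesis $z\notin[\langle\rho,\rho\rangle,+\infty[$ gives a uniform positive lower bound on $|M(\sigma,\nu)-z|$ for $\nu\in\Aa^\ast$. Together these estimates justify moving $R(z)$ inside the Plancherel integral, producing
\begin{equation*}
R(z)f(x) = \frac{1}{d_\tau^2}\sum_\sigma d_\sigma\int_{\Aa^\ast}\frac{(\varphi_\tau^{\sigma,\nu}\ast f)(x)\, p_\sigma(\nu)}{M(\sigma,\nu)-z}\, d\nu\,.
\end{equation*}

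Next I would identify $\Aa^\ast$ with $\R$ through $\nu=\lambda\alpha$, so that $d\nu=|\alpha|\,d\lambda$ and, using \eqref{zetasigma}, $M(\sigma,\lambda\alpha)-z=\lambda^2|\alpha|^2-\zeta_\sigma^2$. The factorisation $\lambda^2|\alpha|^2-\zeta_\sigma^2=(\lambda|\alpha|-\zeta_\sigma)(\lambda|\alpha|+\zeta_\sigma)$ together with the partial-fraction identity
\begin{equation*}
\frac{1}{\lambda^2|\alpha|^2-\zeta_\sigma^2} = -\frac{1}{2\zeta_\sigma}\left[\frac{1}{\zeta_\sigma-\lambda|\alpha|}+\frac{1}{\zeta_\sigma+\lambda|\alpha|}\right],
\end{equation*}
the evenness in $\lambda$ of $\varphi_\tau^{\sigma,\lambda\alpha}\ast f$ (from the parity lemma preceding Corollary \ref{boundedconvol}), and the parity of $p_\sigma(\lambda\alpha)/\lambda$ read off from the explicit expression of $p_\sigma$ in Proposition \ref{Propo plancherel density}, let me fold the two summands onto each other via the substitution $\lambda\mapsto -\lambda$. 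Regrouping the resulting constants together with the Jacobian $|\alpha|$ delivers the expression $\tfrac{1}{d_\tau}R_\sigma(\zeta_\sigma)f$ in the form \eqref{Rsigma}, which establishes the claim.

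The only non-routine point in this plan is the exchange between $R(z)$ and the Plancherel integral. It will rest on the Paley--Wiener bound of Corollary \ref{boundedconvol} for the integrand in $\nu$, together with the resolvent bound coming from the location of $z$ off the spectrum; the remainder of the argument is a purely algebraic manipulation.
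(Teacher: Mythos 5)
Your argument is correct and follows essentially the same route as the paper's proof: expand $f$ via the continuous part of the Plancherel inversion combined with \eqref{sphconv}, pull $(\Delta-z)^{-1}$ inside the integral using the Casimir eigenvalue \eqref{eigenvalue Casimir} of $\varphi_\tau^{\sigma,\lambda}$ justified by the Paley--Wiener decay of Corollary \ref{boundedconvol}, then pass to the real variable $\lambda$ through \eqref{lambda} and fold the integral by parity as in Lemma 2.4 of \cite{HilgPasq}. The only cosmetic differences are bookkeeping: to land literally on the form \eqref{Rsigma}, with the factor $p_\sigma(\lambda\alpha)/\lambda$ and prefactor $1/|\alpha|$, one should split $\frac{1}{\lambda^2|\alpha|^2-\zeta_\sigma^2}$ as $\frac{1}{2\lambda|\alpha|}\bigl[\frac{1}{\lambda|\alpha|-\zeta_\sigma}+\frac{1}{\lambda|\alpha|+\zeta_\sigma}\bigr]$ rather than extracting $-\frac{1}{2\zeta_\sigma}$ (the two rewritings are equal after the $\lambda\mapsto-\lambda$ folding), and your constant $d_\sigma/d_\tau^2$ matches the paper's normalization up to its declared indifference to positive multiples.
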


\begin{proof}
Let $f \in C_c^\infty(G,\tau)$. Due to the inversion of the vector-valued Helgason-Fourier transform \eqref{plancherelformula} and the formula \eqref{sphconv} for the convolution product between $\varphi_\tau^{\sigma,\lambda}$ and $f \in L^2_\text{cont}(G,\tau)\cap C_c^\infty(G,\tau)$, we have
\begin{equation*}
    f(x) = \frac{1}{d_\tau} \sum_{\sigma \in \hat{M}(\tau)} \int_{\Aa^\ast}\Big(\varphi_\tau^{\sigma,\lambda} \ast f  \Big)(x)~p_\sigma(\lambda)~~d\lambda
\end{equation*}
Because of Lemma \ref{PW}, we know that $~\varphi_\tau^{\sigma,\lambda} \ast f~$ is a rapidly decreasing smooth function. So by Lemma 2.2,
\begin{align*}
    R(z)f &= \frac{1}{d_\tau} \sum_{\sigma \in \hat{M}(\tau)} \int_{\Aa^\ast}\Big((\Delta -z)^{-1}\varphi_\tau^{\sigma,\lambda} \ast f  \Big)(x)~p_\sigma(\lambda)~~d\lambda\\
         &= \frac{1}{d_\tau } \sum_{\sigma \in \hat{M}(\tau)}  \int_{\Aa^*}  (M(\sigma,\lambda) - z)^{-1} \Big(\varphi_\tau^{\sigma,\lambda} \ast f  \Big)(x) ~ p_\sigma(\lambda) ~ d\lambda~.
\end{align*}
Computing $R(z)$ is then equivalent  to compute for each $\sigma\in \hat{M}(\tau)$ \begin{equation*}R_\sigma(z) := \int_{\Aa^*}  (M(\sigma,\lambda) - z)^{-1} \Big(\varphi_\tau^{\sigma,\lambda} \ast f  \Big)(x) ~ p_\sigma(\lambda) ~ d\lambda \end{equation*}
 As the values of $\mu_\sigma$, $\rho$ and $\rho_M$ are constant we can introduce the variable $\zeta_\sigma$ defined in \eqref{zetasigma}. Finally, changing the variable with the isomorphism \eqref{lambda} between $\Aa^*_\C$ and $\C$:
\begin{equation*}
    R_\sigma(\zeta_\sigma) f(x):= \int_{\R}  (\zeta_\sigma^2 -\lambda^2|\alpha|^2)^{-1} \Big(\varphi_\tau^{\sigma,\lambda\alpha} \ast f  \Big)(x) ~ p_\sigma(\lambda\alpha) ~ d\lambda
\end{equation*}
Notice that we are using the same symbol $\lambda$ for the the variable in $\Aa^*$ and the corresponding value $\lambda_\alpha \in \R$.
Since the function $\varphi^{\sigma,\lambda}_\tau$ is even in $\lambda$ the same computation as in \cite[Lemma 2.4]{HilgPasq} yields the formula \eqref{Rsigma}. 
\end{proof}

Morera's theorem ensures that the function $R_\sigma(~\cdot~) f(x)$ is holomorphic in $\zeta_\sigma \in \C\setminus \R$. We want to extend meromorphically $R_\sigma(~\cdot~) f(x)$ from one half-plane to the other. 
To fix notation, we will consider $R_\sigma(~\cdot~) f(x)$ as a function defined on the upper half plane  $\Im (\zeta)>0$. We will find the meromorphic continuation of this function to $\C$ by shifting the contour of integration in the direction of the negative imaginary axis and by applying the residue theorem. The poles of the Plancherel density give then poles of the meromorphic continuation. That is why, if the Plancherel density has no poles, so does the meromorphic continuation, and the Laplacian has no resonances (as for $G=\Spin(2n+1,1)$). 

The formula of Plancherel density is given in Appendix \ref{Plancherel densities section} (equations \eqref{eq Plancherel density Rcase}, \eqref{eq Plancherel density Ccase}, \eqref{eq Plancherel density Qcase} and \eqref{eq Plancherel density Ocase}) for each of the rank-one groups $G$. The following proposition unifies the case-by-case formulas found in the literature. The proof is a direct computation. 

\begin{Prop}\label{Propo plancherel density}
Let $G$ be of real rank-one. Set \begin{equation*}
    m^\alpha := \frac{1}{2}(m_{\alpha/2} + m_\alpha -1)~.
\end{equation*}
Then the Plancherel density is given by the following formula:
\begin{equation}\label{Plancherelformula eq}
  p_\sigma(\lambda\alpha) =
         (-1)^s ~ \lambda ~ \tanh\left(\pi\lambda + \frac{3\pi si}{2}\right)  \prod_{j=1}^{m^\alpha} \left(\lambda^2 +\left(B_j +\rho-j \right)^2 \right)
\end{equation}
where 
\begin{equation}\label{eq definition of s}
    s= \left\{\begin{array}{cl}
    2 b_1                          & \text{ if } G = \Spin(2n,1)  \\
     2b_0 + n - 1                  & \text{ if } G = \SU(n,1)\\
     2b_0                          & \text{ if } G = \Sp(n,1)\\
     2 b_1                          & \text{ if } G = \F_4
\end{array}\right.\end{equation} and \begin{equation}\label{eq definition of B_j}
    B_j = \left\{\begin{array}{cl}
      b_j                     & \text{ if } G = \Spin(2n,1)  \\
     b_{j+1} - b_0                & \text{ if } G = \SU(n,1)\\
     \left(\begin{array}{cl}
         b_{1+j} - b_0 -1& \text{ for } j=1,\ldots,n \\
         b_0 - b_{2n+1-j} -1 & \text{ for } j=n+1,\ldots,2n-1
     \end{array}\right)                       & \text{ if } G = \Sp(n,1)
\end{array}\right.
\end{equation}

If $G = \F_4$ the exceptional case, then
\begin{equation}\label{eq definition of B_j F4}
    (B_1, \ldots, B_7) = (b_1+b_2+b_3, ~b_1+b_2-b_3,~ b_1/2, ~b_2/2,~ b_3/2, ~-b_1+b_2+b_3,~ -b_1+b_2-b_3)
\end{equation}
\end{Prop}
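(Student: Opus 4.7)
The strategy is to verify the unified formula \eqref{Plancherelformula eq} case by case against the four explicit Plancherel densities recalled in Appendix \ref{Plancherel densities section}. Since each factor in the proposed formula has a simple dependence on the group (sign, hyperbolic factor, number and shape of quadratic factors), the verification splits into three bookkeeping sub-tasks that can be carried out in parallel for the four groups $\Spin(n,1)$, $\SU(n,1)$, $\Sp(n,1)$, $\F_4$.

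First I would reduce the trigonometric piece $\tanh(\pi\lambda + 3\pi s i /2)$. Using that $\tanh$ has period $i\pi$ and satisfies $\tanh(z + i\pi/2)=\coth z$, one obtains
\begin{equation*}
(-1)^s\tanh\!\left(\pi\lambda+\tfrac{3\pi s i}{2}\right)
=\begin{cases}\tanh(\pi\lambda) & s\text{ even},\\[2pt] -\coth(\pi\lambda) & s\text{ odd.}\end{cases}
\end{equation*}
This already matches the classical ``tangent vs.\ cotangent'' dichotomy of rank-one Plancherel densities; the content is to check that the parity of $s$ given by \eqref{eq definition of s} coincides, in each case, with the parity criterion appearing in the literature (related to whether $\sigma$ lifts from $M_0$ or from a non-trivial component of $M$). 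For $\SU(n,1)$ the extra shift by $n-1$ comes from the fact that $-I \in M$ contributes a sign $(-1)^{n-1}$; for $\Sp(n,1)$ and the even real hyperbolic case, $M$ is connected and the dichotomy is governed entirely by the integer $b_0$ or $b_1$.

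Second, I would check that the number of quadratic factors in $\prod_{j=1}^{m^\alpha}$ agrees with the number of $B_j$'s in \eqref{eq definition of B_j}--\eqref{eq definition of B_j F4}. A direct computation of $m^\alpha=\tfrac{1}{2}(m_{\alpha/2}+m_\alpha-1)$ yields $\tfrac{n-2}{2}$ for $\Spin(2n,1)$ (with $m_{\alpha/2}=0$, $m_\alpha=2n-1$), $n-1$ for $\SU(n,1)$, $2n-1$ for $\Sp(n,1)$ and $7$ for $\F_4$. In each case this matches exactly the length of the corresponding tuple $(B_1,\dots,B_{m^\alpha})$ defined in the statement.

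Third -- and this is where the bulk of the work lies -- I would match the zeros of the polynomial $\prod_{j=1}^{m^\alpha}\bigl(\lambda^2+(B_j+\rho_\alpha-j)^2\bigr)$ with those of the case-by-case density in the appendix. Writing $\rho_\alpha=\tfrac{1}{2}(m_\alpha+m_{\alpha/2}/2)$, this is a purely combinatorial identity: one plugs in the definition \eqref{eq definition of B_j} of the $B_j$'s in terms of the highest weight coordinates $(b_0,b_1,\dots)$ of $\sigma$ and observes that the resulting shifts $B_j+\rho_\alpha - j$ recover precisely the poles of the Harish-Chandra $c$-function (or equivalently the imaginary parts indexing the Kostant $c$-function poles) appearing in the appendix formulas \eqref{eq Plancherel density Rcase}--\eqref{eq Plancherel density Ocase}. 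I would expect the main obstacle to be the $\F_4$ case: the seven $B_j$'s given by \eqref{eq definition of B_j F4} are linear combinations of $b_1,b_2,b_3$ reflecting the seven positive short roots of the $\mathfrak{so}(9)$ root system restricted by $M$-invariance, and lining them up with the explicit Plancherel density from \cite{JohnF4} requires carefully tracking a Weyl-group reordering. For the other three families the matching is essentially a shift of indexing and can be done uniformly. Once all three sub-tasks are verified, the unified formula \eqref{Plancherelformula eq} follows.
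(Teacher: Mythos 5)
Your proposal follows essentially the same route as the paper: the paper's proof of Proposition \ref{Propo plancherel density} is exactly a direct case-by-case verification of \eqref{Plancherelformula eq} against the explicit densities \eqref{eq Plancherel density Rcase}--\eqref{eq Plancherel density Ocase} derived in Appendix \ref{Plancherel densities section}, and your three bookkeeping steps (parity of $s$ versus the $\tanh$/$\coth$ dichotomy, the count $m^\alpha$, and matching the shifts $B_j+\rho_\alpha-j$ with the appendix factors) are precisely that computation. One slip to fix: for $G=\Spin(2n,1)$, with $m_{\alpha/2}=0$ and $m_\alpha=2n-1$ one gets $m^\alpha=n-1$ (not $\tfrac{n-2}{2}$), which is what matches the $n-1$ coordinates $b_1,\dots,b_{n-1}$ of $\mu_\sigma$ and the product over $j=1,\dots,n-1$ in \eqref{eq Plancherel density Rcase}.
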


Here the $b_j$'s are the coefficients of the highest weight of $\sigma$. We refer the reader to Appendix \ref{Plancherel densities section} to see the computations of this formula.

\begin{Rem}
To make the computations in each of the four cases, one needs the following table:

\renewcommand{\arraystretch}{1.5}

\begin{center}
 \begin{tabular}{|c|c|c|c|c|c|c|} 
 \hline
 $G$ & $K$ & $\Sigma^+$ & $m_{\alpha/2}$ & $m_\alpha$ & $\rho_\alpha$ & $m^\alpha$ \\
 \hline
 $\Spin(2n,1)$ &$\Spin(2n)$ & $\{\alpha\}$ & $0$ & $2n-1$ & $n-\frac{1}{2}$ & $n-1$ \\  
 \hline
$\SU(n,1)$ &$\Ss(\U(n) \times \U(1))$ & $\{\alpha/2,\alpha\}$ & $2n-2$ & $1$ & $\frac{n}{2}$ & $n-1$\\ 
 \hline
$\Sp(n,1)$ &$\Sp(n)$ & $\{\alpha/2,\alpha\}$ & $4n-4$ & $3$ & $n+\frac{1}{2}$ & $2n-1$\\
 \hline
 $\F_4$ &$\Spin(9)$ & $\{\alpha/2,\alpha\}$ & $8$ & $7$ & $\frac{11}{2}$ & $7$\\
 \hline
\end{tabular}
\end{center}
\end{Rem}

\vspace{3mm}
\begin{minipage}{0.6\textwidth}
\underline{Suppose $s$ even}: The formula above goes in $\tanh(\pi\lambda)$, which has first order poles at $\lambda \in i\left(\Z+\frac12\right)$ however the singular points of this function are the imaginary numbers of the form $i\left(\Z + \frac{1}{2}\right)$. Since $s$ is even, the $B_j + \rho - j$ are in $\Z + \frac{1}{2}$. 
Hence the zeros of the polynomial part of $\frac{p_\sigma(\lambda\alpha)}{\lambda}$ are the element
\begin{equation}\label{eq zero polyn Plch density}
    \left\{\pm i(B_j +\rho -j) ~|~ j=1,\ldots, m^\alpha \right\}~.
\end{equation}
Thus the Plancherel density has simples poles in the complement of this set in $i(\Z +\frac{1}{2})$.

\vspace{0.5cm}

\underline{Suppose $s$ odd}: The formula above goes in $\coth(\pi\lambda)$, with simple poles at $\lambda\in i\Z$. Since $s$ is odd, the $B_j+ \rho - j$ are in $\Z$. 
Thus the poles of the Plancherel density are simple and located in the complementary of the set \eqref{eq zero polyn Plch density} in $i\Z $.
\end{minipage}
\begin{minipage}{0.04\textwidth}
\hspace{1mm}
\end{minipage}
\begin{minipage}{0.3\textwidth}
\begin{center}
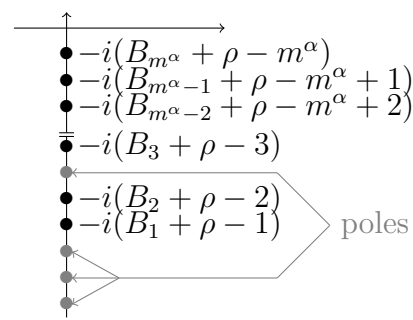

\begin{tikzpicture}[scale=0.7]
\draw [->](-1,0) to (3,0);
\draw [<-|](0,0.3) to (0,-2);
\draw [-|](0,-5.5) to (0,-2.05);
\draw (0,-3.75) node[right] {$-i(B_1+\rho-1)$} node{$\bullet$};
\draw (0,-3.25) node[right] {$-i(B_2+\rho-2)$} node{$\bullet$};
\draw (0,-2.75)  node{\textcolor{gray}{$\bullet$}};
\draw (0,-2.25) node[right] {$-i(B_3+\rho-3)$} node{$\bullet$};
\draw (0,-1.5) node[right] {$-i(B_{m^\alpha-2}+\rho-m^\alpha+2)$} node{$\bullet$};
\draw (0,-1) node[right] {$-i(B_{m^\alpha-1}+\rho-m^\alpha +1)$} node{$\bullet$};
\draw (0,-0.5) node[right] {$-i(B_{m^\alpha}+\rho-m^\alpha)$} node{$\bullet$};
\draw (0,-4.25)  node{\textcolor{gray}{$\bullet$}};
\draw (0,-4.75)  node{\textcolor{gray}{$\bullet$}};
\draw (0,-5.25)  node{\textcolor{gray}{$\bullet$}};
\draw [gray,<-](0.1,-2.75) to (4,-2.75);
\draw [gray,<-](0.1,-4.75) to (1,-4.75);
\draw [gray,<-](0.1,-4.25) to (1,-4.75);
\draw [gray,<-](0.1,-5.25) to (1,-4.75);
\draw [gray,-](4,-4.75) to (5,-3.75);
\draw [gray,-](4,-2.75) to (5,-3.75);
\draw [gray,-](4,-4.75) to (1,-4.75);
\draw (5,-3.75) node[right] {\textcolor{gray}{poles}};
\end{tikzpicture}
\captionof{figure}{Example of poles with $B_2 = B_3+1$ for $\SO(2n,1)$}
\end{center}
\end{minipage}

\vspace{3mm}


Let   \begin{equation}\label{eq Bmax}
     B_{\max} = \max(|B_1+\rho-1|,|B_{m^\alpha}+\rho-m^\alpha|)~.
  \end{equation} 
  All the values of the form $-i(B_{\max} +k)$, with $k\in \N$, are poles of the Plancherel density. 
Let us view $R_\sigma$ as the $D'(X)$-valued holomorphic function on $\Im(\zeta_\sigma) >0$ defined in \eqref{Rsigma}. We want determine the meromorphic continuation of this function through the real axis.
For this we are shifting the contour of integration in the direction of the negative imaginary axis as in Figure \ref{shift} below. Due to Corollary \ref{boundedconvol}, the convolution product is rapidly decreasing in $\lambda$. Then we just need to upper bound the expression $\left|(\zeta_\sigma-\lambda|\alpha|)^{-1} ~ \frac{p_\sigma(\lambda\alpha)}{\lambda}\right|$ by a polynomial in $|\lambda|$, to make the two integrals along the vertical segments between $-R$ and $-R-i(\rho_1+N+1/2)$ and between $R$ and $R-i(\rho_1+N+1/2)$ tend to $0$ when $R$ goes to infinity.
For $|\Re(\lambda)|$ near to infinity and $\Im (\zeta_\sigma)>0$, we have  $\left|(\zeta_\sigma-\lambda|\alpha|)^{-1} \right|< \Im(\zeta_\sigma)^{-1}$ and $\left|\frac{p_\sigma(\lambda\alpha)}{\lambda}\right| < (1+|\lambda|)^{\deg(p_\alpha)+1}$, where $p_\alpha$ is the polynomial part of $p_\sigma$. So the shift is allowed for all $N \in \N$. 

\definecolor{light-gray}{gray}{0.75}
\begin{center}
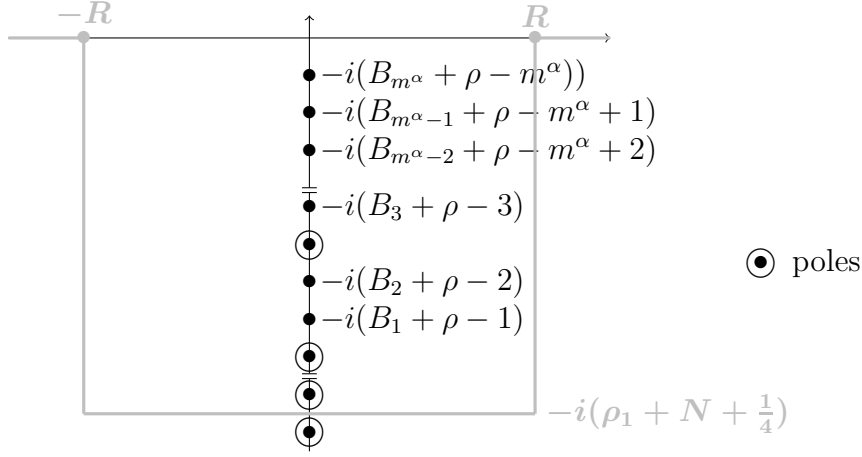

\begin{tikzpicture}[scale=1]
\draw [->](-4,0) to (4,0);
\draw [<-|](0,0.3) to (0,-2);
\draw [|-|](0,-4.475) to (0,-2.05);
\draw [|-](0,-4.525) to (0,-5.5);
\draw (0,-3.75) node[right] {$-i(B_1+\rho-1)$} node{$\bullet$};
\draw (0,-3.25) node[right] {$-i(B_2+\rho-2)$} node{$\bullet$};
\draw (0,-2.75)  node{$\bullet$} node{$\bigcirc$};
\draw (0,-2.25) node[right] {$-i(B_3+\rho-3)$} node{$\bullet$};
\draw (0,-1.5) node[right] {$-i(B_{m^\alpha-2}+\rho-m^\alpha+2)$} node{$\bullet$};
\draw (0,-1) node[right] {$-i(B_{m^\alpha-1}+\rho-m^\alpha+1)$} node{$\bullet$};
\draw (0,-0.5) node[right] {$-i(B_{m^\alpha}+\rho-m^\alpha)$)} node{$\bullet$};
\draw (0,-4.25)  node{$\bullet$} node{$\bigcirc$};
\draw (0,-4.75) node{$\bullet$} node{$\bigcirc$};
\draw (0,-5.25)  node{$\bullet$} node{$\bigcirc$};
\draw (3,0) node[above] {\textcolor{light-gray}{\textbf{$R$}}} node{\textcolor{light-gray}{$\bullet$}};
\draw (-3,0) node[above] {\textcolor{light-gray}{\textbf{$-R$}}} node{\textcolor{light-gray}{$\bullet$}};
\draw [light-gray,very thick,-](-4,0) to (-3,0);
\draw [light-gray,very thick,-](-3,0) to (-3,-5);
\draw [light-gray,very thick,-](-3,-5) to (3,-5);
\draw [light-gray,very thick,-](3,-5) to (3,0);
\draw [light-gray,very thick,-](4,0) to (3,0);
\draw (3,-5) node[right] {\textcolor{light-gray}{\textbf{$-i(\rho_1+N+\frac{1}{4})$}}} node{};
\draw (6,-3)  node{$\bullet$} node{$\bigcirc$} node[right]{~~poles};
\end{tikzpicture}
\captionof{figure}{Shift of contour and residue theorem for $\SO(2n,1)$}\label{shift}
\end{center}

Let $\N_\sigma$ be the set of $k \in \Z$ such that 
\begin{equation*}
    \lambda_k := -i(B_{\max}+k)
\end{equation*}
is a pole of the Plancherel density \eqref{Plancherelformula eq} and $B_{\max}+k \geq 0$. The residue theorem ensures us that for all $N\in \N$ and $\zeta_\sigma$ with $\Im(\zeta) > 0$ : 
\begin{multline}\label{meromorphiccontinuation}
\left(R_\sigma(\zeta_\sigma)f\right)(x) = 
\frac{1}{|\alpha|}\int_{\R-i(N+1/4)}\frac{1}{\zeta_\sigma-\lambda|\alpha|} \Big(\varphi_\tau^{\sigma,\lambda\alpha} \ast f  \Big)(x) ~ \frac{p_\sigma(\lambda\alpha)}{\lambda} ~ d\lambda \\
+~~~
\frac{2i\pi}{|\alpha|}\sum_{\substack{k\in \N_\sigma \\ \lambda_k > -i(N+1/4)}} \frac{1}{\zeta_\sigma-\lambda_k|\alpha|} \Big(\varphi_\tau^{\sigma,\lambda_k\alpha} \ast f  \Big)(x) ~ \Res_{\lambda=\lambda_k}\frac{p_\sigma(\lambda\alpha)}{\lambda}
\end{multline}
The right-hand side of this formula yields a meromorphic continuation of the resolvant of the Laplace operator on $\Im(\zeta) > -(N+1/4)$. The singular values of the Plancherel density induce then poles of the meromorphic continuation of the resolvent of the Laplace operator. If we resume the notation in the expression \eqref{zetasigma}: $z = -\zeta_\sigma^2 -\langle\rho,\rho\rangle + \langle\mu_\sigma +\rho_M,\mu_\sigma+\rho_M\rangle$. This proves Theorem \ref{Thmintro1}.


\section{Residue representations}
\label{section residue repr}

We consider $C^\infty(G,\tau)$ as a $G$-module by left-translations. One can see that for each $\sigma \in \hat{M}(\tau)$ and for each $k \in N_\sigma$, the residues at $\lambda_k$ in the meromorphic continuation \eqref{meromorphiccontinuation} span a $G$-invariant subspace of $C^\infty(G,\tau)$ if $f \in C_c^\infty(G,\tau)$. This is exactly the image of the $G$-intertwining map :
\begin{equation}\label{residueoperatordefinition}
    \fonction{R^\sigma_k}{C_c^\infty(G,\tau)}{C^\infty(G,\tau)}{f}{\varphi^{\sigma,\lambda_k}_\tau \ast f}~.
\end{equation}
We denote this space by
\begin{equation}\label{residuerepresentationdefinition}
    \E_k^\sigma := \{\varphi^{\sigma,\lambda_k}_\tau \ast f ~|~ f \in  C_c^\infty(G,\tau)\}~.
\end{equation}
We want to identify these representations in terms of Langlands parameters, decide which of them are unitarizable and compute their wave front sets. The idea is to decompose $R^\sigma_k$ as follows: 
\newline 
Recall the notation $(\pi^\sigma_{\lambda_k}, \Hi^\sigma_{\lambda_k\alpha})$ for the principal series representation corresponding to $\sigma$ and $\lambda_k\alpha$. For all $l = 1,\ldots,m(\sigma,\tau|_M)$, we denote by $P_l$ the projection of $\Hi_{\lambda_k\alpha}^\sigma$ on its $l$-th $K$-isotypic component, which we identify with $(\tau, \Hi_\tau)$. We get the following decomposition of the residue operator $R^\sigma_k$ as a composition of two $G$-intertwining map:
\begin{equation}\label{eq decompo of Rk}
    \begin{array}{cll}
        R^\sigma_k : C_c^\infty(G,\tau) & \rightarrow \Hi^\sigma_{\lambda_k\alpha} &\rightarrow C^\infty(G,\tau)\\
         ~~f                  & \mapsto ~T_l(f)                     &\mapsto \displaystyle\sum_l P_l ~\pi^\sigma_{\lambda_k\alpha}(~\cdot~^{-1})\big(T_l(f)\big)
    \end{array}
\end{equation}
where $T_l$ is the map from $C^\infty_c(G,\tau)$ to $\Hi_{\lambda_k\alpha}^\sigma$ defined by 
\begin{equation}\label{mapT}
    T_l(f) = \int_G \pi_{\lambda_k}^\sigma(g)\big(P_l^\ast f(g)\big) ~dg~. 
\end{equation}
Here $\ast$ denotes the Hermitian adjoint. Hence, $P_l^*$ maps $\Hi_\tau$ into the principal series and is $K$-equivariant. 
\begin{Lemma}\label{imageofTl} $T_l$ is an intertwining operator between the left regular representation on $C^\infty_c(G,\tau)$ and the principal series representation $(\pi_{\lambda_k}^\sigma, \Hi_{\lambda_k\alpha}^\sigma)$. Moreover, for each $l$ the range of the map $T_l$ is the closed subspace of $\Hi^\sigma_{\lambda_k\alpha}$ spanned by the left translates of $P_l^*\Hi_\tau$. We will denote this space by $\<\pi_{\lambda_k}^\sigma(G) P_l^*\Hi_\tau\>$.
\end{Lemma}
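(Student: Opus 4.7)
I split the argument into two parts: proving that $T_l$ intertwines the two $G$-actions, then identifying its range.

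For the intertwining property, my plan is a direct change of variables. Given $g_0\in G$ and $f\in C_c^\infty(G,\tau)$, starting from
\[
T_l(L(g_0) f) = \int_G \pi_{\lambda_k}^\sigma(g)\bigl(P_l^\ast f(g_0^{-1}g)\bigr)\,dg
\]
and substituting $g \mapsto g_0 g$ by left-invariance of the Haar measure gives $T_l(L(g_0)f) = \pi_{\lambda_k}^\sigma(g_0)\, T_l(f)$, which is exactly the required intertwining relation.

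For the range identification, one inclusion is essentially by definition: the integrand $\pi_{\lambda_k}^\sigma(g) P_l^\ast f(g)$ lies in $\pi_{\lambda_k}^\sigma(g) P_l^\ast\Hi_\tau$ for each $g\in G$, and the closed linear span is preserved under vector-valued integration, so $T_l(f)\in\<\pi_{\lambda_k}^\sigma(G)P_l^\ast\Hi_\tau\>$. For the reverse inclusion, I plan an approximate-identity construction: given $g_0\in G$ and $v\in\Hi_\tau$, fix a nonnegative sequence $\phi_n\in C_c^\infty(G)$ concentrating at $g_0$ with $\int_G \phi_n\,dg=1$, and define
\[
f_n(x) := \int_K \tau(k)\,\phi_n(xk)\,v\,dk,
\]
which a short verification shows lies in $C_c^\infty(G,\tau)$. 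The key computation is $T_l(f_n)$: applying Fubini, the $K$-equivariance of $P_l^\ast$ (which, since $P_l^\ast$ intertwines $\tau$ with the $l$-th $K$-isotypic component of the compact picture of $\pi_{\lambda_k}^\sigma$, satisfies $P_l^\ast\tau(k)=\pi_{\lambda_k}^\sigma(k)P_l^\ast$), and the substitution $g\mapsto gk^{-1}$ in the inner integral, the double integral collapses to $\int_G \phi_n(g)\,\pi_{\lambda_k}^\sigma(g)P_l^\ast v\,dg$, which converges to $\pi_{\lambda_k}^\sigma(g_0)P_l^\ast v$ by strong continuity of the principal series on $P_l^\ast v$.

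The main subtlety is the passage from density to equality between the range of $T_l$ and the closed span. The first part already makes the range $G$-stable; the approximate-identity argument applied with $g_0=e$ shows that $P_l^\ast\Hi_\tau$ lies in the closure of the range, and $G$-invariance propagates this to all of $\pi_{\lambda_k}^\sigma(G)P_l^\ast\Hi_\tau$. Interpreting the statement's equality as the equality of closed linear spans, the two inclusions combine to yield the lemma.
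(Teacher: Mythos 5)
Your proof is correct and follows essentially the same route as the paper: the paper establishes the inclusion of the range in $\<\pi_{\lambda_k}^\sigma(G)P_l^\ast\Hi_\tau\>$ by definition and then hits $\pi_{\lambda_k}^\sigma(g_0)P_l^\ast v$ by applying $T_l$ to $\int_K\tau(k)\delta_{g_0,v}(\cdot\,k)\,dk$ and approximating the Dirac delta by smooth compactly supported functions, which is exactly your approximate-identity computation (using the same $K$-equivariance $P_l^\ast\tau(k)=\pi_{\lambda_k}^\sigma(k)P_l^\ast$ and the collapse of the double integral). Your explicit change-of-variables check of the intertwining property and your remark that the identification of the range is really an equality of closed spans are faithful to, and slightly more careful than, the paper's own argument.
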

\begin{proof}
First of all, by definition, $T_l\big( C^\infty_c(G,\tau)\big)$ is contained in $\<\pi_{\lambda_k}^\sigma(G) P_l^*\Hi_\tau\>$. 
Fix an element $g_0$ of $G$ and a vector $v_0$ in $\Hi_\tau$. 
Let $\delta_{g_0}\in C_c(G)^*$ be the (scalar) Dirac delta at $g_0$ and set $\delta_{g_0, v_0} = \delta_{g_0}v_0$.
Consider the operator on $C_c^\infty (G,\tau)$ defined by as the distribution $f \mapsto \int_G\int_K\tau(k)\delta_{g_0, v_0}(gk)dk ~f(g)dg$. Since $P_l^*$ is a $K$-intertwining operator between $\tau$ and $\pi_{\lambda_k}$, calculations show that \begin{equation*}
    T_l\left(\int_K\tau(k)\delta_{g_0, v_0}(~\cdot~k)~dk\right) = \pi_{\lambda_k}^\sigma(g_0)P_l^\ast v_0 ~.
\end{equation*}
Since the Dirac delta can be approximated by smooth compactly supported functions, each element of $\pi_{\lambda_k}^\sigma(G)\big( P_l^*\Hi_\tau\big)$ can be written as limit of elements of $T_l\big( C^\infty_c(G,\tau)\big)$. This proves the lemma.
\end{proof}

\begin{Rem}
The map from $\Hi_{\lambda_k\alpha}^\sigma$ to $C^\infty_c(G,\tau)$ defined by $\phi \mapsto  P_l\pi_{\lambda_k}^\sigma(~\cdot~^{-1}) \phi$ is a $G$-intertwining operator between $(\pi_{\lambda_k}^\sigma, \Hi_{\lambda_k\alpha}^\sigma)$ and the left regular representation on $C^\infty_c(G,\tau)$. It is known as the Poisson transform (see \cite{Olb, Yang}).
\end{Rem}

The idea how to identify $\E_k^\sigma$ is to compute the range of the map $T_l$, for each $l$, using Lemma \ref{imageofTl}. Knowing the composition series of $\Hi_{\lambda_k\alpha}^\sigma$, we can identify this range in this principal series and then project back with the Poisson transform, the second part of the map in \eqref{eq decompo of Rk}.
The main issue is that,even in rank-one the structure of the principal series representations is very complicated in general. See \cite{Collin1}.
In this paper we will consider only the case when $\sigma$ is the trivial representation of $M$. In this case the composition series is more transparent and the result have a pleasant uniform form. For example, $\tau$ occurs in $\pi^\sigma_{\lambda_k}$ with multiplicity one. So, from now on, we fix an irreducible representation $\tau \in \hat{K}$ which contains the trivial representation of $M$. We study only the residue representations which arise from $\sigma = \triv_M$ and we will denote it $\E_k:=\E_k^{\triv_M}$. Similarly, we set $R_k := R_k^\triv$ et write $T$ and $P$ instead of $T_l$ and $P_l$ respectively. The map in \eqref{eq decompo of Rk} becomes:
\begin{equation}\label{eq decompo of Rk triv case}
    \begin{array}{cll}
        R_k : C_c^\infty(G,\tau) & \rightarrow \Hi^\triv_{\lambda_k\alpha} &\rightarrow C^\infty(G,\tau)\\
         ~~f                  & \mapsto ~~~~~~~~T(f)                     &\mapsto  P ~\pi^\triv_{\lambda_k\alpha}(~\cdot~^{-1})\big(T(f)\big)
    \end{array}
\end{equation}
where $P$ is the projection onto the $\tau$-isotypic component. 
The structure of the spherical principal series representations $(\pi_\lambda,\Hi_\lambda) := (\pi_\lambda^\triv,\Hi^\triv_\lambda)$ of our groups $G$ has been studied by different authors (see e.g. \cite{HoweTan,JohnWallPrincseries, JohnF4, Molchanov}) for every $G$ we are studying. Our main reference will be the paper of Howe and Tan \cite{HoweTan} which provides an explicit description of the subquotients of this principal series representation.
We will treat the residue representations case by case. For each case, we compute the Langlands parameters of $\E_k$, $k\in \N_\sigma$. Then to have more information about the size of the infinite-dimensional ones, we compute the wave front set of the representations. For semisimple Lie groups, this object is a closed union of nilpotent orbits of $\g$  (see \cite[Proposition 2.4]{HoweWFS}). Its computation needs some additional results about Gelfand-Kirillov dimension \cite[Theorem 1.2]{VoganGelfKirill} and on nilpotent orbits in semisimple Lie algebras \cite{CollinMcGov}.

\subsection{Case of $\SO(2n,1)$} \label{section E_k study - Rcase}

Let $G = \SO(2n,1)$. The $K$-types of the spherical principal series representations are parametrized by $m\in\N$ and known as the space of spherical harmonics on $\R^{2n}$ of homogeneous degree $m$, denoted by $\Hi^m(\R^{2n})$. 
Their highest weight of the form $m\epsilon_1$ with respect to the fundamental weights described in section \ref{Section Plancherel density Rcase}. 
A representation $\tau$ containing the trivial representations of $M$ is of this form. From now on, let $\tau$ act on the harmonic polynomials on $\R^{2n}$ of fixed homogeneous degree $N$: \begin{equation*}
    \Hi_\tau \simeq \Hi^N(\R^{2n})~.
\end{equation*}

As $\sigma$ is trivial the poles of the Plancherel density $p_{\triv_M}$ are the $\lambda_k = -i(\rho_\alpha+k) = -i(n+k-1/2)$ with $k\in \N$. The composition series of $\Hi_{\lambda_k\alpha}$ described in \cite{HoweTan} is the following:
\begin{equation}\label{eq composition series real}
    \Hi_{\lambda_k \alpha}\simeq \overbrace{\sum_{m=0}^{k} P_m^* \big(\Hi^m(\R^{2n})\big)}^{\circled{1}} \oplus \overbrace{\sum_{m>k} P_m^*\big(\Hi^m(\R^{2n})\big)}^{\circled{2}}
\end{equation}
where $P_m$ is the projection of $\Hi_{\lambda_k \alpha}$ onto the $K$-isotypic component isomorphic to $\Hi^m(\R^{2n})$.
The action of $G$ cannot send a $K$-type from the second summand to the first one.


\begin{proof}[Langlands parameters of $\E_k$]
In Figure \ref{realKtypes} below, each bullet corresponds to one $K$-type of the representation $\Hi_{\lambda_k\alpha}$, the abscissa of the bullet being the coefficient appearing in the highest weight of the $K$-type. The figure describes the two cases $N\geq k$ and $N< k$. The barrier at $k+1$ and the arrows mean that the action of $\g$ cannot send a $K$-type which is on the right of $k+1$ to a $K$-type which on the left of $k+1$.

\begin{center}
\begin{tikzpicture}[scale=1]
\draw [|->](-5,0) to (5,0);
\draw (-5,0.25) node [above]{$0$};
\draw (-5,0)node[left] {$\triv_K$} node{$\bullet$};
\draw (-4.5,0) node[above] {} node{$\bullet$};
\draw (-4,0) node[above] {} node{$\bullet$};
\draw (-3.5,0) node[above] {} node{$\bullet$};
\draw (-3,0) node[above] {} node{$\bullet$};
\draw (-2.5,0) node[above] {} node{$\bullet$};
\draw (-2,0) node[above] {} node{$\bullet$};
\draw (-1.5,0) node[above] {} node{$\bullet$};
\draw (-1,0)  node{$\bullet$};
\draw (-1,-1.5) node[below] {$k+1$};
\draw (-0.5,0) node[above] {} node{$\bullet$};
\draw (0,0) node[above] {} node{$\bullet$};
\draw (5,0) node[above] {} node{$\bullet$};
\draw (4.5,0) node[above] {} node{$\bullet$};
\draw (4,0) node{$\bullet$} node{$\bigcirc$};
\draw (4,0.5) node[above] {\textbf{$N$}};
\draw (3.5,0) node[above] {} node{$\bullet$};
\draw (3,0) node[above] {} node{$\bullet$};
\draw (2.5,0) node[above] {} node{$\bullet$};
\draw (2,0) node[above] {} node{$\bullet$};
\draw (1.5,0) node[above] {} node{$\bullet$};
\draw (1,0) node[above] {} node{$\bullet$};
\draw (0.5,0) node[above] {} node{$\bullet$};
\draw [-](-1,-1.5) to (-1,1.5);
\draw [->](-1,1) to (0,1);
\draw [->](-1,-1) to (0,-1);

\draw [very thick,light-gray,-](-1.1,-0.5) to (-1.1,0.5);
\draw [very thick,light-gray,-](-1.1,-0.5) to (4,-0.5);
\draw [very thick,light-gray,-,dashed](5.5,-0.5) to (4,-0.5);
\draw [very thick,light-gray,-](-1.1,0.5) to (4,0.5);
\draw [very thick,light-gray,-,dashed](5.5,0.5) to (4,0.5);
\draw (3,-1.5) node[above] {\color{light-gray}\textbf{Image of $T$}} node{};

\draw [|->](-5,-4) to (5,-4);
\draw (-5,-4) node[left] {$\triv_K$} node{$\bullet$};
\draw (-4.5,-4) node[above] {} node{$\bullet$};
\draw (-4,-4) node[above] {} node{$\bullet$};
\draw (-3.5,-4) node[above] {} node{$\bullet$};
\draw (-3,-4) node{$\bullet$}node{$\bigcirc$};

\draw (-2.5,-4) node[above] {} node{$\bullet$};
\draw (-2,-4) node[above] {} node{$\bullet$};
\draw (-1.5,-4) node[above] {} node{$\bullet$};
\draw (-1,-4) node{$\bullet$};
\draw (-1,-5.5) node[below] {$k+1$};
\draw (-0.5,-4) node[above] {} node{$\bullet$};
\draw (0,-4) node[above] {} node{$\bullet$};
\draw (5,-4) node[above] {} node{$\bullet$};
\draw (4.5,-4) node[above] {} node{$\bullet$};
\draw (4,-4) node[above] {} node{$\bullet$};
\draw (3.5,-4) node[above] {} node{$\bullet$};
\draw (3,-4) node[above] {} node{$\bullet$};
\draw (2.5,-4) node[above] {} node{$\bullet$};
\draw (2,-4) node[above] {} node{$\bullet$};
\draw (1.5,-4) node[above] {} node{$\bullet$};
\draw (1,-4) node[above] {} node{$\bullet$};
\draw (0.5,-4) node[above] {} node{$\bullet$};
\draw [-](-1,-5.5) to (-1,-2.5);
\draw [->](-1,-3) to (0,-3);
\draw [->](-1,-5) to (0,-5);

\draw [very thick,light-gray,-](-5,-4.5) to (-5,-3.5);
\draw [very thick,light-gray,-](-5,-4.5) to (4,-4.5);
\draw [very thick,light-gray,-,dashed](5.5,-4.5) to (4,-4.5);
\draw [very thick,light-gray,-](-5,-3.5) to (4,-3.5);
\draw [very thick,light-gray,-,dashed](5.5,-3.5) to (4,-3.5);
\draw (3,-3.5) node[above] {\color{light-gray}\textbf{Image of $T$}};
\draw (-3,-3.5) node[above] {\textbf{$N$}};
\draw (-5,-3.5) node [above]{$0$};

\draw [-](4,-4.25) to (-1.1,-4.25) to (-1.1,-3.75) to (4,-3.75);
\draw [-,dashed](5.5,-4.25) to (4,-4.25);
\draw [-,dashed](5.5,-3.75) to (4,-3.75);
\fill [pattern=north east lines,pattern color=light-gray] (5.5,-4.25) to (-1.1,-4.25) to (-1.1,-3.75) to (5.5,-3.75) to (5.5,-4.25);
\draw (-6,-2) node[above] { Kernel of the};
\draw (-6,-2) node[below] { Poisson transform};
\fill [pattern=north east lines,pattern color=light-gray] (-8,-2.25) rectangle (-9,-1.75);
\draw (-8,-2.25) rectangle (-9,-1.75);

\end{tikzpicture}

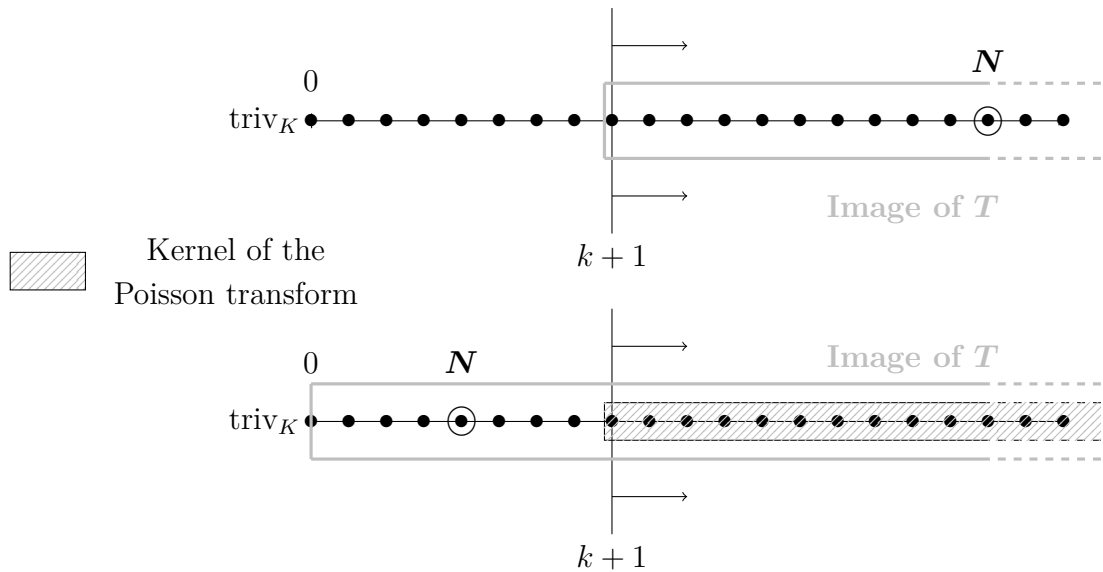
\captionof{figure}{$K$-types in $\Hi_{\lambda_k\alpha}$ and composition series for $k=7$}\label{realKtypes}
\end{center} 

\underline{Suppose that $N\geq k+1$}: The image of $T$ is the space spanned by the action of $\g$ on $P_m^*\big(\Hi^N(\R^{2n})\big)$. Since that action cannot cross the barrier at $k+1$, the image of $T$ is the infinite-dimensional irreducible representation 
\begin{equation*}
    \sum_{m>k} j_{-2n - k+1}\big(\Hi^m(\R^{2n})\big)
\end{equation*}
The second map in \eqref{eq decompo of Rk} (the Poisson transform) is an intertwining operator. So its kernel is either $0$ or the entire representation. Choosing a nonzero $h \in\Hi^N(\R^{2n})$ nonzero, the function $P_{\tau_N} ~\pi_{\lambda_k\alpha}(~\cdot~^{-1})j_{-2n - k+1}\big(h\big)$ has value $h$ at $e_G$. So the kernel of the Poisson transform is not the entire space, thus it is $0$. 
Consequently $\E_k$ is $\circled{2}$ in \eqref{eq composition series real}.

\underline{Suppose that $N\leq k$}: Here the image of $T$ is the entire representation $\Hi_{\lambda_k\alpha}$ because the action of $\g$ can cross the barrier at $k+1$.
The kernel of the second intertwining map in \eqref{eq decompo of Rk triv case} is $\circled{2}$ in \eqref{eq composition series real} by the same argument as before. It follows that $\E_k$ is the subquotient isomorphic to $\circled{1}$ in \eqref{eq composition series real}. 

The unitarity is given in \cite[Diagram 3.15]{HoweTan}. To identify the representations we determined, we compute their Langlands parameters. 
They are collected in Table \ref{Table Ktypes of E_k - Rcase}.

\begin{center}\begin{tabular}{|c|c|c|c|c|}
\hline
    Case        & Minimal $K$-type      & $\delta$                    & values of $\nu$\\
    \hline
    $N>k+1$     & $\Hi^{k+1}(\R^{2n})$  & $\Hi^{k+1}(\R^{2n-1})$    & $\pm \left(n-\frac{3}{2}\right)\alpha$\\
    \hline
    $N\leq k$   & $\triv_K $            & $\triv_M$                 & $i\lambda_k$\\
    \hline
\end{tabular}\captionof{table}{Langlands parameters of $\E_k$ when $G = \SO(2n,1)$}\label{Table Ktypes of E_k - Rcase}
\end{center}
The entries of Table \ref{Table Ktypes of E_k - Rcase} are computed as follows. Let $\mu_l$ be the highest weight of $\Hi^l(\R^{2n})$. A minimal $K$-type minimizes the Vogan norm of the highest weight in $\E_k$:
\begin{equation*}
    \rVert\mu_l\lVert_V = \<\mu_l+2\rho_K,\mu_l+2\rho_K\>
\end{equation*}
where $\rho_K$ is the half sum of positive roots in $S^{\Ak_\C}_+$ (see Appendix \ref{Section Plancherel density Rcase} for the definition).
One can check that $\rVert\mu_l\lVert_V$ is minimal when $l$ is minimal. This yields the first column of the table. 
\newline
Finally, to find $\delta$, one can use \cite[Theorem 3.4]{Bald1}. The $M$-types in $\Hi^l(\R^{2n})$ have highest weight
\begin{equation*}
    \mu_\delta(a) = a \epsilon_1
\end{equation*}
where $a \in [\![0,k+1 ]\!]$. The only one which has the same minimal $K$-type is $\left\{\begin{array}{cc}
    \mu_\delta(0)  & \text{, for } \circled{1} \\
    \mu_\delta(k+1)  & \text{, for } \circled{2}
\end{array}\right.$.

To find $\nu$ one has to compare the infinitesimal character of of $\Ind_{MAN}^G(\triv\otimes e^{i\lambda_k\alpha}\otimes \triv)$ and $\Ind_{MAN}^G(\delta \otimes e^{i\nu}\otimes \triv)$. They have to agree up to the action of the Weyl group of $(\g_\C, \Ak_\C)$. Theorem \ref{Thmintro2}, 1.,  follows from these computations.
\end{proof}

\subsection{Case of  $\SU(n,1)$} \label{section E_k study - Ccase}

Let now $G = \SU(n,1)$. The structure of the spherical principal series is described for $\U(n,1)$  in \cite{HoweTan} but Molchanov find the same result for $\SU(n,1)$ in \cite{Molchanov}.  Choose a basis $\{z_1, \ldots, z_n, z_{n+1}\}$ of $\C^{n+1}$ as complex vector space and $\{z_1, \ldots, z_n, z_{n+1},\overline{z_1}, \ldots, \overline{z_n},\overline{z_{n+1}}\}$ the corresponding basis of $\R^{2n+2}$. The $K$-types of the spherical principal series representations are the spaces \begin{equation*}
    \Hi^{m_1,m_2}(\C^{n})\otimes\Hi^l(\C)
\end{equation*} where $\Hi^{m_1,m_2}(\C^{n})$ are the spherical harmonics on $\C^{2n}$ of homogeneous degree $m_1$ and $m_2$ in the variables $z_1, \ldots, z_{n+1}$ and $\overline{z_1}, \ldots, \overline{z_n},\overline{z_{n+1}}$ respectively. 
Moreover, the space $\Hi^l(\C)$ is defined for all integer $l$ by:
\begin{equation*}
    \Hi^l(\C)= \left\{\begin{array}{cc}
    \Hi^{l,0}(\C) & \text{ if } l\geq0 \\
    \Hi^{0,-l}(\C) & \text{ if } l\leq0
\end{array}\right.
\end{equation*}
Their highest weights are of the form $m_1 \epsilon_1 - m_2\epsilon_n - l \epsilon_{n+1}$ with respect to the fundamental weights described in section \ref{Section Plancherel density Ccase}. 
From now on, let\begin{equation}
    \Hi_\tau \simeq \Hi^{M_1,M_2}(\C^{n}) \otimes\Hi^L(\C)~.
\end{equation} so the highest weight of $\tau$ is
\begin{equation}
    \mu_\tau = M_1\epsilon_1 - M_2\epsilon_n - L \epsilon_{n+1}
\end{equation}
for fixed nonnegative integers $M_1, M_2$, a fixed integer $L$. From now on we call this representation $\tau_{M_1,M_2,L}$. 
As $\sigma$ is trivial, the poles of the Plancherel density are the $\lambda_k = \pm i(\frac{n}{2}+k)$ where $k$ is a non-negative integer. Hence
\begin{equation}\label{eq Ktypes H_lambda_k}
    \Hi_{\lambda_k\alpha}\simeq \sum_{\substack{l\in \Z\\m_1, m_2 \geq 0\\m_1 - m_2 +l = 0}} P^*\big(\Hi^{m_1,m_2}(\C^{n})\otimes\Hi^l(\C)\big)
\end{equation}
where $P := P_{m_1,m_2,l}$ is the projection of $\Hi_{\lambda_k \alpha}$ onto the $K$-isotypic component isomorphic to $\Hi^{m_1,m_2}(\C^{n})\otimes\Hi^l(\C)$. One can see that the conditions in the sum in \eqref{eq Ktypes H_lambda_k} imply that the pair $(m_1+m_2,l)$ determines the triple $(m_1,m_2,l)$ and since $m_1$ and $m_2$ are nonnegative, $-l\leq m_1+m_2 \leq l$ for all $l \in \Z$.

\tikzstyle{circle}=[shape=circle,draw,inner sep=2pt]
\begin{tikzpicture}[scale=0.32]
\draw (-10,-3) node{\textbullet} node{$\bigcirc$};
\draw (-10,-3.25) node[below] {$\tau_{M_1,M_2,L}$};
\fill [pattern=north east lines,pattern color=light-gray] (-11,-8) rectangle (-9,-9);
\draw (-11,-8) rectangle (-9,-9);
\draw (-10,-11) node[above] {Kernel of the};
\draw (-10,-11.25) node {Poisson transform};
\draw (-11,-15) rectangle (-9,-16);
\draw [very thick, light-gray, -] (-11,-15.5) to (-9,-15.5);
\draw (-10,-18) node[above] {Image of $T$};

\draw [->] (0,0) to (11,0);
\draw [->] (0,-8) to (0,8);
\node[circle] (1) at (4,10){1};
\draw (11,0) node[below right] {\tiny $m$};
\draw (0,8) node[above left] {\tiny $l$};
\draw [-] (0,0) to (8,8);\draw [-,dashed] (10.5,10.5) to (8,8); 
\draw [-] (0,0) to (8,-8);\draw [-, dashed] (10.5,-10.5) to (8,-8); 
\foreach \Point in {(0,0), (1,1), (1,-1), (2,-2),(2,2), (2,0), (3,3), (3,-3), (3,1), (3,-1), (3,-3), (4,4), (4,2), (4,0), (4,-2), (4,-4),(5,3), (5,-3), (5,1), (5,-1), (5,-3), (5,-5),(5,5),(6,4), (6,2), (6,0), (6,-2), (6,-4),(6,-6),(6,6),(7,3),(7,-3), (7,1), (7,-1), (7,-3), (7,-5),(7,5),(7,-7),(7,7),(8,4), (8,2), (8,0), (8,-2), (8,-4),(8,-6),(8,6),(8,-8),(8,8),(9,3),(9,-3), (9,1), (9,-1), (9,-3), (9,-5),(9,5),(9,-7),(9,7),(9,-9),(9,9),(10,4), (10,2), (10,0), (10,-2), (10,-4),(10,-6),(10,6),(10,-8),(10,8), (10,-10),(10,10) }{
    \node at \Point {$\bullet$};}
\draw (9,3) node{\textbullet} node{$\bigcirc$};
\draw [thick,-] (0,-6) to (11,5);\draw [thick,->] (4.5,-1.5) to++ (0.75,-0.75);\draw [thick,->] (8.5,2.5) to++ (0.75,-0.75);\draw (0,-6) node[left] {\textbf{\tiny $-2k-2$}};
\draw [thick,-] (0,6) to (11,-5);\draw [thick,->] (4.5,1.5) to++ (0.75,0.75);\draw [thick,->] (8.5,-2.5) to++ (0.75,0.75);\draw (0,6) node[left] {\textbf{\tiny $2k+2$}};
\draw [very thick, light-gray] (11,5.5) to (5.5,0) to (11,-5.5);

\begin{scope}[xshift=20 cm] 
\draw [->] (0,0) to (11,0);
\draw [->] (0,-8) to (0,8);
\node[circle] (2) at (4,10){2};
\draw (11,0) node[below right] {\tiny $m$};
\draw (0,8) node[above left] {\tiny $l$};
\draw [-] (0,0) to (8,8);\draw [-,dashed] (10.5,10.5) to (8,8); 
\draw [-] (0,0) to (8,-8);\draw [-, dashed] (10.5,-10.5) to (8,-8); 
\foreach \Point in {(0,0), (1,1), (1,-1), (2,-2),(2,2), (2,0), (3,3), (3,-3), (3,1), (3,-1), (3,-3), (4,4), (4,2), (4,0), (4,-2), (4,-4),(5,3), (5,-3), (5,1), (5,-1), (5,-3), (5,-5),(5,5),(6,4), (6,2), (6,0), (6,-2), (6,-4),(6,-6),(6,6),(7,3),(7,-3), (7,1), (7,-1), (7,-3), (7,-5),(7,5),(7,-7),(7,7),(8,4), (8,2), (8,0), (8,-2), (8,-4),(8,-6),(8,6),(8,-8),(8,8),(9,3),(9,-3), (9,1), (9,-1), (9,-3), (9,-5),(9,5),(9,-7),(9,7),(9,-9),(9,9),(10,4), (10,2), (10,0), (10,-2), (10,-4),(10,-6),(10,6),(10,-8),(10,8), (10,-10),(10,10) }{\node at \Point {$\bullet$};}
\draw [thick,-] (0,-6) to (11,5);\draw [thick,->] (4.5,-1.5) to++ (0.75,-0.75);\draw [thick,->] (8.5,2.5) to++ (0.75,-0.75);\draw (0,-6) node[left] {\textbf{\tiny $-2k-2$}};
\draw [thick,-] (0,6) to (11,-5);\draw [thick,->] (4.5,1.5) to++ (0.75,0.75);\draw [thick,->] (8.5,-2.5) to++ (0.75,0.75);\draw (0,6) node[left] {\textbf{\tiny $2k+2$}};

\draw (7,5) node{\textbullet}node{$\bigcirc$};
\draw [very thick,light-gray] (11,11.75) to++ (-8.75,-8.75) to (11,-5.75);
\draw [-] (11,5.5) to (5.5,0) to (11,-5.5);
\fill [pattern=north east lines,pattern color=light-gray] (11,5.5) to (5.5,0) to (11,-5.5);
\end{scope}

\begin{scope}[yshift=-25 cm] 
\draw [->] (0,0) to (11,0);
\draw [->] (0,-8) to (0,8);
\node[circle] (3) at (4,10){3};
\draw (11,0) node[below right] {\tiny $m$};
\draw (0,8) node[above left] {\tiny $l$};
\draw [-] (0,0) to (8,8);\draw [-,dashed] (10.5,10.5) to (8,8); 
\draw [-] (0,0) to (8,-8);\draw [-, dashed] (10.5,-10.5) to (8,-8); 
\foreach \Point in {(0,0), (1,1), (1,-1), (2,-2),(2,2), (2,0), (3,3), (3,-3), (3,1), (3,-1), (3,-3), (4,4), (4,2), (4,0), (4,-2), (4,-4),(5,3), (5,-3), (5,1), (5,-1), (5,-3), (5,-5),(5,5),(6,4), (6,2), (6,0), (6,-2), (6,-4),(6,-6),(6,6),(7,3),(7,-3), (7,1), (7,-1), (7,-3), (7,-5),(7,5),(7,-7),(7,7),(8,4), (8,2), (8,0), (8,-2), (8,-4),(8,-6),(8,6),(8,-8),(8,8),(9,3),(9,-3), (9,1), (9,-1), (9,-3), (9,-5),(9,5),(9,-7),(9,7),(9,-9),(9,9),(10,4), (10,2), (10,0), (10,-2), (10,-4),(10,-6),(10,6),(10,-8),(10,8), (10,-10),(10,10) }{
    \node at \Point {$\bullet$};}
\draw [thick,-] (0,-6) to (11,5);\draw [thick,->] (4.5,-1.5) to++ (0.75,-0.75);\draw [thick,->] (8.5,2.5) to++ (0.75,-0.75);\draw (0,-6) node[left] {\textbf{\tiny $-2k-2$}};
\draw [thick,-] (0,6) to (11,-5);\draw [thick,->] (4.5,1.5) to++ (0.75,0.75);\draw [thick,->] (8.5,-2.5) to++ (0.75,0.75);\draw (0,6) node[left] {\textbf{\tiny $2k+2$}};

\draw (8,-6) node{\textbullet}node{$\bigcirc$};
\draw [very thick,light-gray] (11,-11.75) to++ (-8.75,8.75) to (11,5.75);
\draw [-] (11,5.5) to (5.5,0) to (11,-5.5);
\fill [pattern=north east lines,pattern color=light-gray] (11,5.5) to (5.5,0) to (11,-5.5);
\end{scope}

\begin{scope}[xshift=20 cm,yshift=-25 cm] 
\draw [->] (0,0) to (11,0);
\draw [->] (0,-8) to (0,8);
\node[circle] (4) at (4,10){4};
\draw (11,0) node[below right] {\tiny $m$};
\draw (0,8) node[above left] {\tiny $l$};
\draw [-] (0,0) to (8,8);\draw [-,dashed] (10.5,10.5) to (8,8); 
\draw [-] (0,0) to (8,-8);\draw [-, dashed] (10.5,-10.5) to (8,-8); 
\foreach \Point in {(0,0), (1,1), (1,-1), (2,-2),(2,2), (2,0), (3,3), (3,-3), (3,1), (3,-1), (3,-3), (4,4), (4,2), (4,0), (4,-2), (4,-4),(5,3), (5,-3), (5,1), (5,-1), (5,-3), (5,-5),(5,5),(6,4), (6,2), (6,0), (6,-2), (6,-4),(6,-6),(6,6),(7,3),(7,-3), (7,1), (7,-1), (7,-3), (7,-5),(7,5),(7,-7),(7,7),(8,4), (8,2), (8,0), (8,-2), (8,-4),(8,-6),(8,6),(8,-8),(8,8),(9,3),(9,-3), (9,1), (9,-1), (9,-3), (9,-5),(9,5),(9,-7),(9,7),(9,-9),(9,9),(10,4), (10,2), (10,0), (10,-2), (10,-4),(10,-6),(10,6),(10,-8),(10,8), (10,-10),(10,10) }{
    \node at \Point {$\bullet$};}
\draw [thick,-] (0,-6) to (11,5);\draw [thick,->] (4.5,-1.5) to++ (0.75,-0.75);\draw [thick,->] (8.5,2.5)to++ (0.75,-0.75);\draw (0,-6) node[left] {\textbf{\tiny $-2k-2$}};
\draw [thick,-] (0,6) to (11,-5);\draw [thick,->] (4.5,1.5) to++ (0.75,0.75);\draw [thick,->] (8.5,-2.5) to++ (0.75,0.75);\draw (0,6) node[left] {\textbf{\tiny $2k+2$}};

\draw (1,1) node{\textbullet}node{$\bigcirc$};
\draw [very thick,light-gray] (11,-11.5) to (8,-8.5) to (-0.5,-0) to (3,3.5) to (9,9.5) to (11,11.5);
\draw [-] (11, -11.35) to++ (-8.35,8.35) to ++(3, 3) to++(-3,3) to (11, 11.35);
\fill [pattern=north east lines,pattern color=light-gray] (11, -11.35) to++ (-8.35,8.35) to ++(3, 3) to++(-3,3) to (11, 11.35);
\end{scope}
\end{tikzpicture}

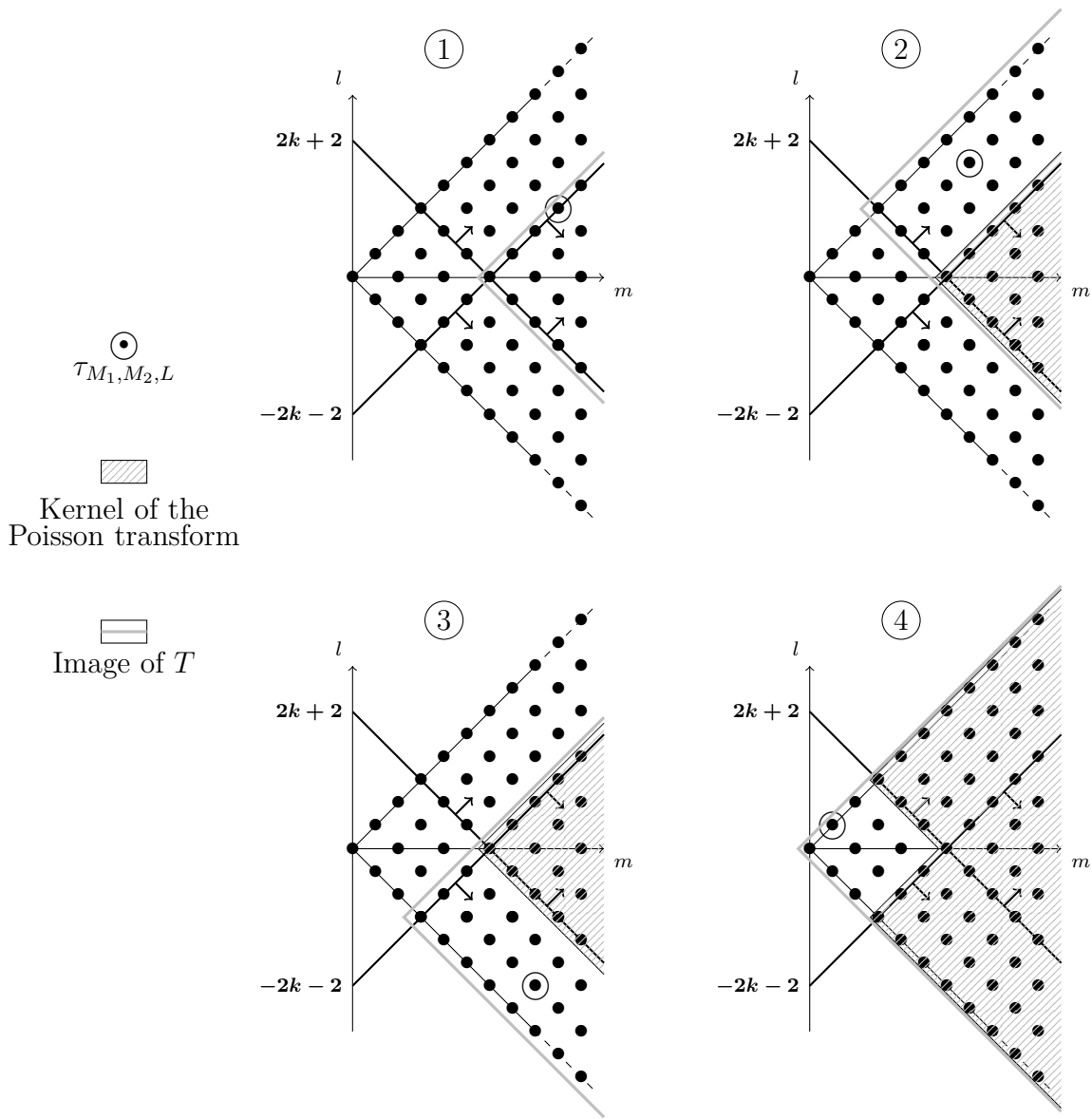
\captionof{figure}{$K$-types in $\Hi_{\lambda_k\alpha}$ and decomposition series for $k=1$}\label{complexKtypes}

\begin{proof}[Langlands parameters of $\E_k$.]
In Figure \ref{complexKtypes}, each bullet corresponds to one $K$-type of $\Hi_{\lambda_k\alpha}$, the coordinates of the point being the pair $(m_1+m_2,l)$ for the $K$-type $\Hi^{m_1,m_2}(\C^{n})\otimes\Hi^l(\C)$. 
\cite[Lemma 4.4]{HoweTan} ensures us that the action of $G$ cannot send a $K$-type to another constituent if it doesn't follow the sense of the arrows.
Thus the space is separated in  four constituents. We denote these constituents North - East - South - West depending on their position. The figure describe the four cases when the $K$-type $\tau_{M_1,M_2,L}$ is in each constituent and is represented by the bullet $(M_1+M_2,L)$. 
The reader should recall Lemma \ref{imageofTl} and \eqref{eq decompo of Rk triv case}.
\begin{enumerate}
    \item[] \textbf{Case $\circled{1}$} : $M_1+M_2 \geq 2k+2$ and $|L| \leq -2k-2+M_1+M_2$  \newline Since the action of $G$ on the $K$-types cannot cross the barriers from the side out, the image of $T$ is the entire East-component \begin{equation*}
    \sum_{\substack{m_1, m_2 \geq 0, ~ m_1+m_2\geq 2k+2\\l\in \Z,~ |l|\leq -2k-2+m_1+m_2\\m_1 - m_2 +l = 0\\}} P^*\big(\Hi^{m_1,m_2}(\C^{n})\otimes\Hi^l(\C)\big)~.
\end{equation*} The Poisson transform is an intertwining operator. Hence its kernel, contained in an irreducible representation, is either $0$ or the entire representation. Choosing a nonzero $h \in\Hi^N(\R^{2n})$, the function $P_{\tau_{M_1,M_2,L}} ~\pi_{\lambda_k\alpha}(~\cdot~^{-1})P^*(h)$ has value $h$ at $e_G$. 
Thus $\E_k $ is equivalent as a representation to the image $T$. \vspace{2mm}
\item[] \textbf{Case $\circled{2}$} : $L\geq |-2k-1+M_1+M_2| +1$ 
\newline From the North-constituent, the action of $G$ can cross the barrier $l=-2k-2+m$ but not the barrier $l=2k+2-m$. Thus the image of $T$ is composed of two North and East-components: 
\begin{equation*}
    \sum_{\substack{m_1, m_2 \geq 0\\l\in \Z,~ l> 2k+2-m_1-m_2\\m_1 - m_2 +l = 0\\}} P^*\big(\Hi^{m_1,m_2}(\C^{n})\otimes\Hi^l(\C)\big)~.
\end{equation*} 
Following the proof in the previous case, one can conclude that the image of the Poisson transform is nonzero and that the North-constituent, where $\tau_{M_1,M_2,L}$ is, is not in the kernel of this map. Because of the barrier $l=-2k-2+m$ the action of $G$ cannot bring a $K$-type from the East-constituent into the North-constituent. Thus the East-constituent is the kernel of the Poisson transform and $\E_k$ is equivalent to the subquotient of the North-East constituents modulo the East-constituent. 
\item[] \textbf{Case $\circled{3}$} : $L\leq |-2k-1+M_1+M_2|-1$ 
\newline This case is completely symmetric to the previous one. The figure explains the result.  \vspace{2mm}
\item[] \textbf{Case $\circled{4}$} : $M_1+M_2 \in [0,2k+2[$ and $L< |2k+2-M_1+M_2 |$
\newline  $\tau_{M_1,M_2,L}$ is in the finite-dimensional West-constituent. From there, the action of $G$ can send a $K$-type onto any other $K$-type in $\Hi_{\lambda_k\alpha}$. Thus the image of $T$ is the entire spherical principal series representation.
As in the previous cases, one can prove that the image of the Poisson transform is nonzero and that the West-constituent, where $\tau_{M_1,M_2,L}$ is, is not in the kernel of this map. Because of the two barriers, the action of $G$ cannot bring a $K$-type from the others constituents into the West-constituent. This means that the kernel of the Poisson transform is the quotient
\begin{equation*}
    \fracobl{\Hi_{\lambda_k\alpha}}{\displaystyle\sum_{\substack{m_1, m_2 \geq 0\\l\in \Z,~ |l|\geq +2k+2-m_1-m_2\\m_1 - m_2 +l = 0\\}} j_{-k-n}\big(\Hi^{m_1,m_2}(\C^{n})\otimes\Hi^l(\C)\big)}
\end{equation*}
\end{enumerate}

The four representations $\E_k$ we have found above are irreducible subquotients of $\Hi_{\lambda_k\alpha}$. Their unitarity is given in \cite[Diagram 3.15]{HoweTan}. We list their Langlands parameter in Table \ref{Table Ktypes of E_k - Ccase}. 

\bigskip
{\hspace{-0.5cm}\tiny\begin{tabular}{|c|c|c|c|c|}
\hline
    Case      & Minimal $K$-type              & $\delta$                    & values of $\nu$\\
    \hline
    $\circled{1}$     & $(k+1,0, \ldots, 0,-(k+1),0)$ & $\begin{array}{cc}
        \big((k+1), 0,\ldots,0,-(k+1),0\big) & \text{if } n>2 \\
        \emptyset & \text{if } n=2
    \end{array}$    & $\pm \left(\frac{n}{2}-1\right)\alpha$ if $n>2$\\
    \hline 
    $\circled{2}$     & $\begin{array}{cc}
        (0, \ldots, 0,-(k+1),k+1) & \text{if } k+1 \leq n-1 \\
        (\left\lfloor \frac{k+2-n}{2}\right\rfloor,0, \ldots, 0,-(k+1),\left\lceil \frac{k+n}{2}\right\rceil) & \text{if } k+1 > n-1
    \end{array}$ &    $\big(0, 0,\ldots,0,-(k+1),(k+1)/2\big)$ & $\pm \left(\frac{k}{2}+\frac{n}{2}-\frac{1}{2}\right)\alpha$\\
    \hline
    $\circled{3}$     & $\begin{array}{cc}
        ((k+1),0, \ldots, 0,k+1) & \text{if } k+1 \leq n-1 \\
        ((k+1),0, \ldots, 0,-\left\lfloor \frac{k+2-n}{2}\right\rfloor,\left\lceil \frac{k+n}{2}\right\rceil) & \text{if } k+1 > n-1
    \end{array}$           & $\big((k+1), 0,\ldots,0,(k+1)/2\big)$ & $\pm \left(\frac{k}{2}+\frac{n}{2}-\frac{1}{2}\right)\alpha$\\
    \hline
    $\circled{4}$     & $\triv_K $            & $\triv_M$                 & $i\lambda_k$\\
    \hline
\end{tabular}
\captionof{table}{Langlands parameters of $\E_k$ when $G = \SU(n,1)$}\label{Table Ktypes of E_k - Ccase}}
To compute the entries of Table \ref{Table Ktypes of E_k - Ccase}, let $\mu_{m_1,m_2,l}$ be the highest weight of $\Hi^{m_1,m_2}(\C^{n})\otimes\Hi^l(\C)$. The minimal $K$-type is obtained after minimising the Vogan norm of their highest weight in $\E_k$:
\begin{equation*}
    \rVert\mu_{m_1,m_2,l}\lVert_V = \<\mu_{m_1,m_2,l}+2\rho_K,\mu_{m_1,m_2,l}+2\rho_K\>
\end{equation*}
where $\rho_\Ak$ is the half sum of positive roots in $S_{\Ak_\C}^+$ (see Appendix \ref{Section Plancherel density Ccase} for the definition). 
Computations show that $\rVert\mu_{m_1,m_2,l}\lVert_V$ is minimal when $(m_1 + n-1)^2 + (m_2 + n-1)^2 + l^2$ is minimal.
\newline
To find $\delta$, one can use \cite[Theorem 4.4]{Bald1}. We show the reasoning in case $\circled{1}$. The minimal $K$-type $\tau_{\min{}}$ has highest weight \begin{equation*}
    (k+1) \epsilon_1 - (k+1) \epsilon_n
\end{equation*} 

\begin{itemize}
    \item[]Suppose $n>2$: The branching rules imply that \begin{equation*}
    \delta \in \hat{M}(\tau_{\min{}}) \Leftrightarrow \mu_\delta = a \epsilon_2 + b \epsilon_n - \frac{a+b}{2} (\epsilon_{n+1} + \epsilon_1) ~,
\end{equation*}
where $0\leq a \leq k+1$ and $0\leq -b \leq k+1$. But the minimal $K$-type of $\Ind_{M}^K(\delta)$ is $\tau_{\min{}}$ only when $a = k+1$ and $b=-k-1$. So\begin{equation*}
    \mu_\delta = (k+1) \epsilon_2 + (k+1) \epsilon_n
\end{equation*}
    \item[]Suppose $n=2$: The difference with the case $n>2$ is that there is no integer between 1 and $n$. Here 
    \begin{equation*}
    \delta \in \hat{M}(\tau_{\min{}}) \Leftrightarrow \mu_\delta = a \epsilon_2 + \frac{-a}{2}  (\epsilon_{1} + \epsilon_3) ~,
\end{equation*}
where $-(k+1)\leq a \leq k+1$. One can prove that $\mu' = a\epsilon_1 -a\epsilon_3$ is always the highest weight of a $K$-type in $\Ind_{M}^K(\delta)$ with a smaller Vogan norm than $\tau_{\min{}}$. So we get then discrete series representation. The Blattner parameter of the discrete series is the highest weight $\mu_{\tau_{\min{}}}$ of its minimal $K$-type $\tau_{\min{}}$. Its Harish-Chandra parameter is \begin{equation*}
    \Lambda_k = \mu_{\tau_{\min{}}} +2\rho_\Ak -\rho_\g = (k+1) \epsilon_1 - k\epsilon_2 -\epsilon_3 ~. 
\end{equation*}
\end{itemize}
To find $\nu$, one has to compare the infinitesimal characters of $\Ind_{MAN}^G(\triv\otimes e^{i\lambda_k\alpha}\otimes \triv)$ and of $\Ind_{MAN}^G(\delta \otimes e^{i\nu}\otimes \triv)$. They have to coincide up to the action of the Weyl group of $(\g_\C, \Ak_\C)$. For Case $\circled{1}$, one gets respectively the infinitesimal characters
\begin{equation*}
    \left(\frac{n}{2} +k\right) (\epsilon_1 - \epsilon_{n+1}) + \frac{1}{2} \sum_{j=2}^n (n-2i +2) \epsilon_i 
\end{equation*}
and 
\begin{equation*}
    \nu_\alpha (\epsilon_1 - \epsilon_{n+1}) + \left(\frac{n}{2} +k \right) \epsilon_2 + \frac{1}{2} \sum_{j=3}^{n-1} (n-2i +2) \epsilon_i +  \left(-\frac{n}{2} -k \right)\epsilon_n
\end{equation*}
The (complex) Weyl group permutes the $\pm\epsilon_i$'s, so $\nu = \pm\left(\frac{n}{2} - 1\right)\alpha$.
The theorem follows from similar computations in other cases.
\end{proof}

Since the multiplicity of $\tau$ in $\Hi^\delta_\nu$ is 1 (see for instance \cite{KoornwMultfree}) for these two first cases, the residue representation can be identified as the unique irreducible subquotient of $\Hi^\delta_\nu$ containing $\tau$.

\subsection{Case of $\Sp(n,1)$} \label{section E_k study - Qcase}

Let $G = \Sp(n,1)$. We recall that $K = \Sp(1)\times \Sp(n)$. Up to equivalence there a unique representation of $\Sp(1) = \SU(2)$ of dimension $j+1$ for all nonnegative integer $j$. Denote this representation by $\theta_j$, acting on the space $V_1^j$. Denote by $V^{m_1,m_2}_p$ the irreducible representation of $\Sp(p)$ with highest weight $(m_1, m_2, 0, \ldots,0)$ where $m_1\geq m_2 \geq 0$ (see \cite[Theorem 6 page 327]{Zhelobenko}). \newline As $\sigma$ is trivial, the poles are the points $\lambda_k = \pm i(n + \frac{1}{2}+k)$ with $k\in \N$.
The spherical principal series representation decomposes over $K$ as follows: 
\begin{equation}
    \Hi \simeq \sum P^*\big(V^{m_1,m_2}_p\otimes V^l_1\big)
\end{equation}
where $P := P_{m_1,m_2,l}$ is the projector in $\Hi_{\lambda_k\alpha}$ on the $K$-isotopic component isomorphic to $V^{m_1,m_2}_p\otimes V^l_1$ and the sum is over the following set : \begin{equation}\label{set of conditions for K types in Sp}
    \begin{array}{cc}
        m_1\geq m_2\geq0, & l\geq0, \\
        m_1 + m_2 \geq l, & m_1 + m_2 +l\in 2\Z~.
    \end{array}
\end{equation}
These conditions imply that a fixed pair $(m,l) = (m_1+m_2,l)$ represents a fiber of $K$-types $V^{m_1,m_2}_p\otimes V^l_1$, one for each value of $m_1-m_2$.
The highest weight of $V^{m_1,m_2}_p\otimes V^l_1$ can be computed as  $m_1\epsilon_1 + m_2\epsilon_2 + |l|\epsilon_{n+1}$ with respect to the sets of roots in \eqref{Section Plancherel density Qcase}. Let $\tau$ be $V^{a_1,a_2}_p\otimes V^{a_n+1}_1$. From now on we call this representation $\tau_{a_1,a_2,a_{n+1}}$. 

\begin{proof}[Langlands parameters of $\E_k$]

To understand the composition series of the representation $\E_k$, we have now to know how $\p$ acts on its $K$-types. This action is given in \cite[Lemma 5.3]{HoweTan}. The diagram 5.18 in that paper give us the different cases.

One can prove that the action of $\g$ can bring one $K$-type in each fiber to every other in the same fiber. Then we can follow the method used for $G = \SO(2n,1)$ or $\SU(n,1)$ putting the $K$-types of $\Hi_{\lambda_k\alpha}$ in the same two-dimensional space corresponding to the points of coordinates $(m_1 + m_2,l)$. Notice that here each point in Figure \ref{quaternKtypes} corresponds then to a fiber of $K$-types. We refer to \cite[Lemma 5.4]{HoweTan} for more details.
Figure \ref{quaternKtypes} illustrates the computation of the residue representation $\E_k$. Two barriers cross the set of $K$-types. 
These separate the space $\Hi_{\lambda_k\alpha}$ in  three constituent. The figure shows the three cases where the $K$-type $\tau_{a_1,a_2,a_{n+1}}$ is in each constituent. The arguments are the same as in the complex case. Naming the constituent North, West ant East according to their position, one gets the following equivalences: 
{\begin{equation}
    \E_k \simeq\begin{cases}
           \text{East-constituent}  &\text{if } a_{n+1}\leq -2k-4+a_1+a_2\\
            \fracobl{\text{North-East-constituents}}{\text{East-constituent}}  &\text{if } a_{n+1}\geq |-2k-2+a_1+a_2|\\
        \fracobl{\Hi_{\lambda_k\alpha}}{\text{North-East-constituents}}     &\text{if } a_{n+1}\geq 2k+2-a_1-a_2
    \end{cases}
\end{equation}}

\begin{center}
\begin{tikzpicture}[scale=0.35]
\begin{scope}[xshift=5cm] 
\draw (5,-5) node{\textbullet} node{$\bigcirc$};
\draw (5,-5.25) node[below] {$\tau_{a_1,a_2,a_{n+1}}$};
\fill [pattern=north east lines,pattern color=light-gray] (27,-4.5) rectangle (29,-5.5);
\draw (27,-4.5) rectangle (29,-5.5);
\draw (28,-8) node[above] {Kernel of the};
\draw (28,-8.25) node {Poisson transform};
\draw (15,-4.5) rectangle (17,-5.5);
\draw [very thick, light-gray, -] (15,-5) to (17,-5);
\draw (16,-8) node[above] {Image of $T$};
\end{scope}
\draw [->] (0,0) to (11,0);
\draw [->] (0,0) to (0,8);
\draw (11,0) node[below right] {\tiny $m$};
\draw (0,8) node[above left] {\tiny $l$};
\draw [-] (0,0) to (8,8);\draw [-,dashed] (10.5,10.5) to (8,8); 
\foreach \Point in {(0,0), (1,1), (2,2), (2,0), (3,3),  (3,1),  (4,4), (4,2), (4,0), (5,3), (5,1), (5,5),(6,4), (6,2), (6,0),(6,6),(7,3), (7,1), (7,5),(7,7),(8,4), (8,2), (8,0), (8,6),(8,8),(9,3), (9,1), (9,5),(9,7),(9,9),(10,4), (10,2), (10,0), (10,6),(10,8), (10,10) }{
    \node at \Point {$\bullet$};}
\draw (8,2) node{\textbullet} node{$\bigcirc$};
\draw [thick,-] (11,5) to (6,0);\draw [thick,->] (8.5,2.5) to++ (0.75,-0.75);\draw (6,-0.25) node[below] {\textbf{\tiny $2k+4$}};
\draw [thick,-] (0,4) to (4,0);\draw [thick,->] (2.5,1.5) to++ (0.75,0.75);\draw (0,4) node[left] {\textbf{\tiny $2k+2$}};
\draw [very thick, light-gray] (11,5.5) to (5, -0.5) to (11,-0.5) ;

\begin{scope}[xshift=17 cm] 
\draw [->] (0,0) to (11,0);
\draw [->] (0,0) to (0,8);
\draw (11,0) node[below right] {\tiny $m$};
\draw (0,8) node[above left] {\tiny $l$};
\draw [-] (0,0) to (8,8);\draw [-,dashed] (10.5,10.5) to (8,8); 
\foreach \Point in {(0,0), (1,1), (2,2), (2,0), (3,3),  (3,1),  (4,4), (4,2), (4,0), (5,3), (5,1), (5,5),(6,4), (6,2), (6,0),(6,6),(7,3), (7,1), (7,5),(7,7),(8,4), (8,2), (8,0), (8,6),(8,8),(9,3), (9,1), (9,5),(9,7),(9,9),(10,4), (10,2), (10,0), (10,6),(10,8), (10,10) }{
    \node at \Point {$\bullet$};}
\draw [thick,-] (11,5) to (6,0);\draw [thick,->] (8.5,2.5) to++ (0.75,-0.75);\draw (6,-0.25) node[below] {\textbf{\tiny $2k+4$}};
\draw [thick,-] (0,4) to (4,0);\draw [thick,->] (2.5,1.5) to++ (0.75,0.75);\draw (0,4) node[left] {\textbf{\tiny $2k+2$}};
\draw (7,5) node{\textbullet} node{$\bigcirc$};
\draw [very thick, light-gray] (11,-0.5) to (4,-0.5) to++ (-2.5,2.5) to++ (9,9);
\draw[-] (11,5.7) to (5, -0.3) to (11,-0.3);
\fill [pattern=north east lines,pattern color=light-gray] (11,5.7) to (5, -0.3) to (11,-0.3);
\end{scope}

\begin{scope}[xshift=34 cm] 
\draw [->] (0,0) to (11,0);
\draw [->] (0,0) to (0,8);
\draw (11,0) node[below right] {\tiny $m$};
\draw (0,8) node[above left] {\tiny $l$};
\draw [-] (0,0) to (8,8);\draw [-,dashed] (10.5,10.5) to (8,8); 
\foreach \Point in {(0,0), (1,1), (2,2), (2,0), (3,3),  (3,1),  (4,4), (4,2), (4,0), (5,3), (5,1), (5,5),(6,4), (6,2), (6,0),(6,6),(7,3), (7,1), (7,5),(7,7),(8,4), (8,2), (8,0), (8,6),(8,8),(9,3), (9,1), (9,5),(9,7),(9,9),(10,4), (10,2), (10,0), (10,6),(10,8), (10,10) }{
    \node at \Point {$\bullet$};}
\draw [thick,-] (11,5) to (6,0);\draw [thick,->] (8.5,2.5) to++ (0.75,-0.75);\draw (6,-0.25) node[below] {\textbf{\tiny $2k+4$}};
\draw [thick,-] (0,4) to (4,0);\draw [thick,->] (2.5,1.5) to++ (0.75,0.75);\draw (0,4) node[left] {\textbf{\tiny $2k+2$}};
\draw (1,1) node{\textbullet} node{$\bigcirc$};
\draw [very thick, light-gray] (11,-0.5) to (-1,-0.5) to++ (11.5,11.5);
\draw [-] (11,-0.3) to (4,-0.3) to++ (-2.3,2.3) to++ (8.7,8.7);
\fill [pattern=north east lines,pattern color=light-gray] (11,-0.3) to (4,-0.3) to++ (-2.3,2.3) to++ (8.7,8.7);
\end{scope}
\end{tikzpicture}
\captionof{figure}{$K$-types in $\Hi_{\lambda_k\alpha}$ and decomposition series for $k=1$}\label{quaternKtypes}
\end{center}

The three representations $\E_k$ we have found above are irreducible subquotients of $\Hi_{\lambda_k\alpha}$. Their unitarity is given in \cite[Diagram 3.15]{HoweTan}. We find the following results using Proposition 3.1 in \cite{Bald2}:

\medskip
{\hspace{-0.7cm}\begin{tabular}{|c|c|c|c|c|}
\hline
    Case      & Minimal $K$-type              & $\delta$                    & values of $\nu$\\
    \hline
    $\circled{1}$     & $(k+2)\epsilon_1+(k+2)\epsilon_2$ & $
        (k+2)e_3+ (k+2)e_4$& $\pm \left(n-\frac{3}{2}\right)\alpha$\\
    \hline 
    $\circled{2}$     & 
        $(k+1)\epsilon_1+(k+1)\epsilon_{n+1}$ &    $\begin{array}{cc}
            (k+1)e_3+\frac{k+1}{2}(e_1-e_2)& \text{ if } k \leq 2n-4 \\
            \emptyset & \text{ if } k > 2n-4
        \end{array}$ & $\pm \left(\frac{k}{2}+n\right)\alpha$~ if  $k \leq 2n-4$\\
    \hline
    $\circled{3}$     & $\triv_K $            & $\triv_M$                 & $i\lambda_k$\\
    \hline
\end{tabular}
\captionof{table}{Langlands parameters of $\E_k$ when $G = \Sp(n,1)$}\label{Table Ktypes of E_k - Qcase}}

We indicate how to compute the entries of this table. In case $\circled{2}$, for $k > 2n-4$, one cannot find a representation $\delta$ following the conditions of the Langlands parameters. We conclude that $\E_k$ is in the discrete series in these cases. Their Blattner parameter is the highest weight of the minimal $K$-type, which is $(k+1)\epsilon_1+(k+1)\epsilon_{n+1}$. The Harish-Chandra parameter of the discrete series is \begin{equation*}
    (k+n)\epsilon_1 + (k+3-n)\epsilon_{n+1} + \sum_{j=2}^n 2(n-i+1)\epsilon_i~. 
\end{equation*}
\end{proof}

\begin{Rem}
One can verify that Theorem 1 in \cite{Parthasarathy} gives the same conclusion as ours: there are discrete series representations in case $\circled{2}$ for $k > 2n-4$. 
\end{Rem}

\subsection{Case of $F_4$} \label{section E_k study - Ocase}
The paper of Johnson \cite{JohnF4} gives the results we need in this case. 
Let $G$ be the exceptional Lie group $F_4$. We recall that here $K = \Spin(9)$ and $M = \Spin(7)$. As $\sigma$ is trivial, the poles of the meromorphically extended resolvent are the points $\lambda_k = \pm i(\frac{11}{2}+k)$ with $k\in \N$.

The $K$-types of $\Hi_{\lambda_k\alpha}$ are the $V^{p,q}$ with $p\geq q\geq0$ and $p + q\in 2\Z$ (see \cite[Theorem 3.1]{JohnF4}), with highest weight\begin{equation*}
    \mu_{p,q} =  \frac{p}{2}\epsilon_1 + \frac{q}{2}\epsilon_2 + \frac{q}{2}\epsilon_3 + \frac{q}{2}\epsilon_4
\end{equation*}
with respect to the sets of roots in Appendix \eqref{Section Plancherel density Qcase}. Let $\tau$ be $V^{a,b}$, for $a\geq b\geq0$ and $a + b \in 2\Z$. In the following, we call this representation $\tau_{a,b}$. 

We can follow the method of the cases $G = \SO(2n,1)$ or $\SU(n,1)$, putting the $K$-types of $\Hi_{\lambda_k\alpha}$ in the same two-dimensional space corresponding to the points of coordinates $(p,q)$.
Figure \ref{OctoKtypes} illustrates the computations of $\E_k$. There are again two barriers. 
They separate the space $\Hi_{\lambda_k\alpha}$ in  three constituents. The figure describes the three cases where the $K$-type $\tau_{p,q}$ is in different constituents. The arguments are the same as in the complex case.

Naming the constituent North, West ant East according to their position, one gets the following equivalences: 

{\begin{equation}
    \E_k \simeq\begin{cases}
           \text{East-constituent}  &      \text{if } a_{n+1}\leq -2k-4+a_1+a_2\\
            \fracobl{\text{North-East-constituents}}{\text{East-constituent}}  &      \text{if } a_{n+1}\geq |-2k-2+a_1+a_2|\\
        \fracobl{\Hi_{\lambda_k\alpha}}{\text{North-East-constituents}}     &      \text{if } a_{n+1}\geq 2k+2-a_1-a_2
    \end{cases}
\end{equation}}

\begin{center}
\begin{tikzpicture}[scale=0.35]
\begin{scope}[xshift=5cm] 
\draw (5,-5) node{\textbullet} node{$\bigcirc$};
\draw (5,-5.25) node[below] {$\tau_{a,b}$};
\fill [pattern=north east lines,pattern color=light-gray] (27,-4.5) rectangle (29,-5.5);
\draw (27,-4.5) rectangle (29,-5.5);
\draw (28,-8) node[above] {Kernel of the};
\draw (28,-8.25) node {Poisson transform};
\draw (15,-4.5) rectangle (17,-5.5);
\draw [very thick, light-gray, -] (15,-5) to (17,-5);
\draw (16,-8) node[above] {Image of $T$};
\end{scope}
\draw [->] (0,0) to (11,0);
\draw [->] (0,0) to (0,8);
\draw (11,0) node[below right] {\tiny $p$};
\draw (0,8) node[above left] {\tiny $q$};
\draw [-] (0,0) to (8,8);\draw [-,dashed] (10.5,10.5) to (8,8); 
\foreach \Point in {(0,0), (1,1), (2,2), (2,0), (3,3),  (3,1),  (4,4), (4,2), (4,0), (5,3), (5,1), (5,5),(6,4), (6,2), (6,0),(6,6),(7,3), (7,1), (7,5),(7,7),(8,4), (8,2), (8,0), (8,6),(8,8),(9,3), (9,1), (9,5),(9,7),(9,9),(10,4), (10,2), (10,0), (10,6),(10,8), (10,10) }{
    \node at \Point {$\bullet$};}
\draw (10,0) node{\textbullet}node{$\bigcirc$};
\draw [thick,-] (11,3) to (8,0);\draw [thick,->] (9.5,1.5) to++ (0.75,-0.75);\draw (8,-0.25) node[below] {\textbf{\tiny $2k+8$}};
\draw [thick,-] (0,2) to (2,0);\draw [thick,->] (1.5,0.5) to++ (0.75,0.75);\draw (0,2) node[left] {\textbf{\tiny $2k+2$}};
\draw [very thick, light-gray] (11,3.5) to (7, -0.5) to (11,-0.5);

\begin{scope}[xshift=17 cm] 
\draw [->] (0,0) to (11,0);
\draw [->] (0,0) to (0,8);
\draw (11,0) node[below right] {\tiny $p$};
\draw (0,8) node[above left] {\tiny $q$};
\draw [-] (0,0) to (8,8);\draw [-,dashed] (10.5,10.5) to (8,8); 
\foreach \Point in {(0,0), (1,1), (2,2), (2,0), (3,3),  (3,1),  (4,4), (4,2), (4,0), (5,3), (5,1), (5,5),(6,4), (6,2), (6,0),(6,6),(7,3), (7,1), (7,5),(7,7),(8,4), (8,2), (8,0), (8,6),(8,8),(9,3), (9,1), (9,5),(9,7),(9,9),(10,4), (10,2), (10,0), (10,6),(10,8), (10,10) }{
    \node at \Point {$\bullet$};}
\draw [thick,-] (11,3) to (8,0);\draw [thick,->] (9.5,1.5) to++ (0.75,-0.75);\draw (8,-0.25) node[below] {\textbf{\tiny $2k+8$}};
\draw [thick,-] (0,2) to (2,0);\draw [thick,->] (1.5,0.5) to++ (0.75,0.75);\draw (0,2) node[left] {\textbf{\tiny $2k+2$}};
\draw (7,5) node{\textbullet}node{$\bigcirc$};
\draw [very thick, light-gray] (11,-0.5) to (2,-0.5) to++ (-1.5,1.5) to++ (10,10);
\draw [-] (11,3.7) to (7, -0.3) to (11,-0.3);
\fill [pattern=north east lines,pattern color=light-gray] (11,3.7) to (7, -0.3) to (11,-0.3);
\end{scope}

\begin{scope}[xshift=34 cm] 
\draw [->] (0,0) to (11,0);
\draw [->] (0,0) to (0,8);
\draw (11,0) node[below right] {\tiny $p$};
\draw (0,8) node[above left] {\tiny $q$};
\draw [-] (0,0) to (8,8);\draw [-,dashed] (10.5,10.5) to (8,8); 
\foreach \Point in {(0,0), (1,1), (2,2), (2,0), (3,3),  (3,1),  (4,4), (4,2), (4,0), (5,3), (5,1), (5,5),(6,4), (6,2), (6,0),(6,6),(7,3), (7,1), (7,5),(7,7),(8,4), (8,2), (8,0), (8,6),(8,8),(9,3), (9,1), (9,5),(9,7),(9,9),(10,4), (10,2), (10,0), (10,6),(10,8), (10,10) }{
    \node at \Point {$\bullet$};}
\draw [thick,-] (11,3) to (8,0);\draw [thick,->] (9.5,1.5) to++ (0.75,-0.75);\draw (8,-0.25) node[below] {\textbf{\tiny $2k+8$}};
\draw [thick,-] (0,2) to (2,0);\draw [thick,->] (1.5,0.5) to++ (0.75,0.75);\draw (0,2) node[left] {\textbf{\tiny $2k+2$}};
\draw (0,0) node{\textbullet}node{$\bigcirc$};
\draw [very thick, light-gray] (11,-0.5) to (-1,-0.5) to++ (11.5,11.5);
\draw [-] (11,-0.3) to (2,-0.3) to++ (-1.3,1.3) to++ (9.7,9.7);
\fill [pattern=north east lines,pattern color=light-gray] (11,-0.3) to (2,-0.3) to++ (-1.3,1.3) to++ (9.7,9.7);
\end{scope}

\end{tikzpicture}
\captionof{figure}{$K$-types in $\Hi_{\lambda_k\alpha}$ and decomposition series for $k=0$}\label{OctoKtypes}
\end{center}

The three representations $\E_k$ we have found above are irreducible subquotients of $\Hi_{\lambda_k\alpha}$. We find the following results using \cite[Theorem 3.4]{Bald1}:

{\begin{center}\begin{tabular}{|c|c|c|c|c|}
\hline
    Case      & Minimal $K$-type              & $\delta$                    & values of $\nu$\\
    \hline
    $\circled{1}$     & $(k+4)\epsilon_1$ & $\frac{(k+4)}{4}(3 \epsilon_1+ \epsilon_{2}+\epsilon_{3}+\epsilon_{4})$ & $\pm \frac{1}{2}\left(k+1\right)\alpha$\\
    \hline 
    $\circled{2}$     & 
        $\frac{\lfloor \frac{3k+5}{2}\rfloor}{2}\epsilon_1+\frac{\lceil \frac{k-1}{2}\rceil}{2}(\epsilon_{2}+\epsilon_{3}+\epsilon_{4})$ &    $\frac{(k+4)}{4}(3 \epsilon_1+ \epsilon_{2}+\epsilon_{3}+\epsilon_{4})$ & $\frac{1}{2} \left(k+10\right)\alpha$\\
    \hline
    $\circled{3}$     & $\triv_K $            & $\triv_M$                 & $i\lambda_k$\\
    \hline
\end{tabular}
\captionof{table}{Langlands parameters of $\E_k$ when $G = \F_4$}\label{Table Ktypes of E_k - Ocase}\end{center}}
In the table $\lfloor \cdot\rfloor$ denotes the integer part and $\lceil \cdot\rceil$ is the upper integer part. 
Here the method is exactly the same as when $G = \Spin(2n,1)$. One has just to take care of the embedding of $M$ in $K$ which is not standard (see \cite[section 6]{Bald1}). We used \cite{WeylgroupF4} to know how the (complex) Weyl group acts on the infinitesimal characters of the principal series.

\section{Wave front set of the residue representations}

In this section, we continue to assume that $(\tau,\Hi_\tau)$ contains the trivial representation of $M$. The residue representations $\E_k$ are those listed in Theorem \ref{Thmintro2} and defined in section \ref{section residue repr}. The purpose of this part is to compute the wave front set $\WF(\E_k)$ of these representations. For this, we need some results on the Gelfand-Kirillov dimension \cite{VoganGelfKirill} and on the nilpotent orbits in semisimple Lie algebras \cite{CollinMcGov}. 
Proposition 2.4 in \cite{HoweWFS} tells us that the wave front set of $\E_k$ is equal to a closed union of nilpotent orbits of $\g$. 
To know which occurs, we use the Gelfand-Kirillov dimension of $\E_k$, computed by \cite[Theorem 1.2]{VoganGelfKirill} in terms of $K$-types. 
This dimension is the half of the dimension of the wave front set see as a nilpotent orbit (see \cite{VoganGelfKirill, Rossman, BarbaschVogan}). Combining this information with the list of nilpotent orbits in semisimple Lie algebra and their dimensions in  \cite{CollinMcGov}, we get the results.

\subsection{Generalities}

The definition of the wave front set of a representation can be found in \cite[page 118]{HoweWFS}. The wave front sets are closed conical sets in $T^*G$. For Lie groups, they can be seen as invariant sets of $\g^*$ under the coadjoint action $\Ad^*$ of $G$.
For semisimple Lie groups, $\g$ and $\g^*$ can be identified by the Killing form. Then $\Ad^*$ invariant subsets of $\g^\ast$ can be seen as $\Ad$ invariant subsets of $\g$. More precisely, the wave front set of a representation can be identified with the closure of a union of nilpotent orbits under the adjoint action in $\g$ (see \cite[Proposition 2.4]{HoweWFS}). It will be denoted by $\WF(\pi)$.

\subsection{Case of $\SO(2n,1)$}

In this section, we prove the results about wave front set stated in Theorem \ref{Thmintro2} for $G = \SO(2n,1)$. First we compute the Gelfand-Kirillov dimension of the Harish-Chandra module of $\E_k$ defined in section \ref{section residue repr}.

\begin{Lemma}\label{Gelf.-Kir.dim/real}
The Gelfand-Kirillov dimension of the residue representation $\E_k$ is $\begin{cases}{cl}
    0 & \text{ if }N\leq k \\
    2n-1 & \text{ if }N > k
\end{cases}$
\end{Lemma}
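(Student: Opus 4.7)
The plan is to split according to the two cases in Theorem \ref{Thmintro2}(1), which correspond to whether the residue representation $\E_k$ is finite-dimensional or infinite-dimensional.

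First, for $N \leq k$: from the analysis in Section \ref{section E_k study - Rcase}, $\E_k$ is the subquotient $\circled{1}$ in \eqref{eq composition series real}, whose $K$-type decomposition is $\bigoplus_{m=0}^{k} \Hi^m(\R^{2n})$. This is a finite direct sum of finite-dimensional $K$-isotypic components, so $\E_k$ is finite-dimensional as a $(\g,K)$-module. Its Gelfand-Kirillov dimension is therefore $0$ by definition.

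Second, for $N > k$: again by the analysis in Section \ref{section E_k study - Rcase}, $\E_k$ is the subquotient $\circled{2}$ in \eqref{eq composition series real} with $K$-type decomposition $\bigoplus_{m \geq k+1} \Hi^m(\R^{2n})$, each with multiplicity one. To compute $\operatorname{GKdim}(\E_k)$, I will apply \cite[Theorem 1.2]{VoganGelfKirill}, which identifies the Gelfand-Kirillov dimension with the degree of polynomial growth in $R$ of the function
\begin{equation*}
    R \longmapsto \sum_{\lVert \mu \rVert \leq R} m_{\E_k}(\mu) \dim V_\mu\,.
\end{equation*}
In our situation the $K$-types have highest weight $\mu = m\epsilon_1$, so $\lVert \mu \rVert$ is proportional to $m$, and the Weyl dimension formula (or the classical count of spherical harmonics) gives $\dim \Hi^m(\R^{2n}) = \binom{m+2n-1}{2n-1} - \binom{m+2n-3}{2n-1}$, which is a polynomial in $m$ of degree $2n-2$. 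Summing over $m$ from $k+1$ up to the cutoff then produces polynomial growth in $R$ of degree $(2n-2)+1 = 2n-1$, yielding $\operatorname{GKdim}(\E_k) = 2n-1$.

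The only step requiring attention is matching the normalization of $\lVert \mu \rVert$ used in Vogan's theorem with the counting parameter $m$, but this is routine since $\lVert m\epsilon_1 \rVert$ depends linearly on $m$ and only the leading-order behavior matters. No essential obstacle arises, and the lemma follows from the $K$-type description of $\E_k$ obtained in the previous section combined with Vogan's asymptotic formula.
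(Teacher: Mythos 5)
Your proposal is correct and follows essentially the same route as the paper: the case $N\leq k$ is dismissed because $\E_k$ is finite-dimensional, and for $N>k$ one applies \cite[Theorem 1.2]{VoganGelfKirill} to the multiplicity-one $K$-types $\Hi^m(\R^{2n})$, $m\geq k+1$, whose dimensions grow like $m^{2n-2}$, so the cumulative count grows like $t^{2n-1}$. The only cosmetic difference is that the paper truncates the sum by the eigenvalue of the Casimir operator $\Omega_\Ak$ rather than by the norm of the highest weight, which is asymptotically the same cutoff and yields the same conclusion.
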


\begin{proof}
First of all, if $N\leq k$, the residue representation is finite-dimensional, so the Gelfand-Kirillov dimension is 0. 
For the case $N > k$, the residue representation is infinite-dimensional. One can compute the eigenvalue of each $K$-type $\tau_m$ in $\Hi_{\lambda_k\alpha}$ for the Casimir operator $\Omega_\Ak$ of $\Ak$ (using for example \cite[Proposition 10.6]{Hall}): 
\begin{equation}
    \tau_m(\Omega_\Ak) = \Big((m+n-1)^2 - (n-1)^2\Big) \Id
\end{equation}
Knowing that the dimension of the space of harmonic polynomials of homogeneous degree $m$ is $d_m = \binom{m+2n-1}{2n-1} - \binom{m+2n-3}{2n-1}$ (see \cite{Zhelobenko} for example), we compute the sum $N_{\E_k}(t)$ of the dimensions $d_m$ of $\tau_m$ until its eigenvalue exceed a fixed real number $t^2$. This sum depends on $t$ as follows: 
\begin{equation}
    N_{\E_k}(t) = \sum_{m=k+1}^{N_t} d_m = \binom{N_t+2n-1}{2n-1} + \binom{N_t+2n-2}{2n-1} - \binom{k+2n-2}{2n-1} - \binom{k+2n-1}{2n-1}
\end{equation}
where $N_t +n-1= \lfloor t\rfloor$ and if $t$ goes to infinity, we have : 
\begin{equation}
    N_{\E_k}(t) \overset{\infty}{\sim} \frac{(N_t+2n-1)!}{N_t!} \overset{\infty}{\sim} t^{2n-1}
\end{equation}
Hence the Gelfand-Kirillov dimension is equal to $2n-1$ (see \cite[Theorem 1.2]{VoganGelfKirill}).
\end{proof}

\begin{Lemma}\label{Lemma number of orbits in Spin}
 Let \begin{equation}
    \g = \m \oplus \Aa \oplus \g_\alpha \oplus \g_{-\alpha}
\end{equation}be the restricted root space decomposition of $\g$. Then there are 2 nilpotent orbits in $\g$, namely the zero orbit and the orbit generated by any non-zero element of $\g_\alpha$. 
\end{Lemma}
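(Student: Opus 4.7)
The plan is to establish two claims: $(a)$ the set $\g_\alpha \setminus \{0\}$ forms a single $G$-orbit under the adjoint action, and $(b)$ every nonzero nilpotent element of $\g$ is $G$-conjugate to an element of $\g_\alpha$. Together with the obvious orbit $\{0\}$, this gives exactly two nilpotent orbits.

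For $(a)$, I would observe that $A$ acts on $\g_\alpha$ by positive scaling, since $\Ad(\exp tH)X = e^{t\alpha(H)}X$ for all $X \in \g_\alpha$ and $H \in \Aa$. Simultaneously, $M = \Spin(2n-1)$ acts on $\g_\alpha \simeq \R^{2n-1}$ via the standard representation of $\SO(2n-1)$ on $\R^{2n-1}$, which is transitive on each sphere centered at the origin. Composing these two actions, $MA \subset G$ acts transitively on $\g_\alpha \setminus \{0\}$, which is therefore contained in a single $G$-orbit.

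For $(b)$, the strategy is as follows. Given a nonzero nilpotent $X \in \g$, invoke the Jacobson--Morozov theorem (valid over $\R$) to embed $X$ in an $\sL_2$-triple $(X, H, Y)$ with $H$ hyperbolic. Because $G$ has real rank one, every hyperbolic element of $\g$ is $K$-conjugate into $\Aa$, as any two maximal abelian subspaces of $\p$ are $K$-conjugate. After conjugating, I may therefore assume $H \in \Aa$. The relation $[H, X] = 2X$ then forces $X$ to lie in the $+2$-eigenspace of $\ad H$. Inspecting the decomposition $\g = \m \oplus \Aa \oplus \g_\alpha \oplus \g_{-\alpha}$, and using that $\m \oplus \Aa$ is centralized by $\Aa$, this eigenspace is $\g_\alpha$ when $\alpha(H) = 2$ and $\g_{-\alpha}$ when $\alpha(H) = -2$. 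In the second case I would apply the nontrivial element $s_\alpha$ of the restricted Weyl group, realized by some $w \in N_K(\Aa)$, whose adjoint action swaps $\g_\alpha$ and $\g_{-\alpha}$.

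The key technical inputs are Jacobson--Morozov over $\R$ and the $K$-conjugacy of maximal abelian subspaces of $\p$, both standard consequences of rank-one structure. I anticipate no serious obstacle, since the vanishing of $\g_{\alpha/2}$ (forced by $\Sigma = \{\pm\alpha\}$) makes $\g_\alpha$ abelian and removes any delicate normal-form issues that would complicate the other rank-one cases where $\g_{\alpha/2} \neq 0$.
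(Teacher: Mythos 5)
Your proof is correct, but it completes the key step by a different route than the paper. The paper argues that $MA$ acts transitively on $\g_\alpha\setminus\{0\}$ and that $\g_{-\alpha}=\theta(\g_\alpha)$ is conjugate to $\g_\alpha$, and then, for the fact that \emph{every} nonzero nilpotent element of $\g$ actually lands in these root spaces up to conjugacy, it falls back on the classification of real nilpotent orbits of $\soe(2n,1)$ by signed Young diagrams (Theorem 9.3.4 in \cite{CollinMcGov}): only two diagrams occur, so there are only two orbits. You prove that reduction directly: real Jacobson--Morozov gives a triple $(X,H,Y)$ with $H$ hyperbolic, $H$ is conjugated into $\Aa$, and then $[H,X]=2X$ forces $X\in\g_\alpha$ or $\g_{-\alpha}$, the two being exchanged by the nontrivial restricted Weyl group element; combined with the $MA$-transitivity on $\g_\alpha\setminus\{0\}$ (which you justify more explicitly than the paper, via the standard $\Spin(2n-1)$-action on $\R^{2n-1}$ and scaling by $A$), this gives the count without any appeal to classification tables, and it makes transparent where the hypothesis $\Sigma=\{\pm\alpha\}$ enters. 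One small imprecision: the Jacobson--Morozov element $H$ need not lie in $\p$, so the justification ``any two maximal abelian subspaces of $\p$ are $K$-conjugate'' does not apply as stated; you should instead invoke the standard fact (Kostant) that every hyperbolic element is $\Ad(G)$-conjugate into the closed chamber $\overline{\Aa^+}\subset\Aa$, or choose a Kostant--Sekiguchi (Cayley) triple with $H\in\Aa$ from the start. With that citation corrected, the argument is complete; note also that $G$-conjugacy, not $K$-conjugacy, is all that is needed.
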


\begin{proof}
The non-zero elements of $\g_\alpha$ are conjugated by the elements of $MA$. Moreover, $\g_{-\alpha} = \theta(\g_\alpha) = \Ad(k_\theta)(\g_\alpha)$ for a suitable element $k_\theta \in K$. Then all nilpotent elements in the restricted root spaces above are conjugate, except 0 which forms an orbit alone. 

This can also be found using Theorem 9.3.4 in \cite{CollinMcGov}. The two only possible Young diagrams for $\soe(2n,1)$ are : 

\ytableausetup{centertableaux}
\begin{center}\begin{ytableau}
   + \\
   + \\
   + \\
   \none [\vdots] \\
   + \\
   -
\end{ytableau}\hspace{30mm}
    \begin{ytableau}
      +  & -& +\\
   + & \none  & \none \\
   +& \none  & \none \\
   + & \none  & \none \\
   \none [\vdots] & \none  & \none \\
   + & \none  & \none 
\end{ytableau}
\end{center}
with $2n$ '+', which respectively correspond respectively to the 0 orbit and the orbit generated by $\g_\alpha$. 
\end{proof}

We are now able to prove the results about the wave front set of $\E_k$ in Theorem \ref{Thmintro2} for $G = \SO(2n,1)$. 


\begin{proof}[Wave front set of $\E_k$.]
We have just two cases. When $N\leq k$, $\E_k$ is finite-dimensional so the wave front set is the zero orbit. If $N>k$, as $\E_k$ is infinite-dimensional, only the nilpotent orbit generated by $\g_\alpha$ can correspond. 

This can be checked using the formula for the dimension of nilpotent orbits in $\g_\C$, the complexification of $\g$ (\cite[Corollary 6.1.4]{CollinMcGov}). In fact if $N>k$, the Gelfand-Kirillov dimension is $2n-1$ because of Lemma \ref{Gelf.-Kir.dim/real}. The dimension of the wave front set is then $4n-2$. Because of the corollary cited above, we have:
\begin{equation}
    4n-2 = \dim \WF(\E_k) = (2n+1)^2 - \frac{1}{2}\sum s_i^2 -\frac{1}{2} \sum_{\text{odd}} r_i 
\end{equation}
where $s_i = \left|\left\{j~|~d_j\geq i \right\}\right|$ and $r_i = \left|\left\{j~|~d_j = i \right\}\right|$ in a partition $[d_1, \ldots, d_k]$ of $2n+1$.\newline
Thus, only one complex nilpotent orbit has this dimension: the one matching to the partition $[3,1,1,\ldots,1]$ of $2n+1$. This corresponds indeed to the Young diagram of the (real) nilpotent orbit generated by $\g_\alpha$. 
\end{proof}

\begin{minipage}{0.6\textwidth}
\subsection{Case of $\SU(n,1)$}In this section, we prove the results about wave front set in Theorem \ref{Thmintro2} for $G = \SU(n,1)$. First of all, we compute the Gelfand-Kirillov dimension of the Harish-Chandra module of $\E_k$ defined in section \ref{section residue repr}. Recall that there are 4 possibilities for the residue representation (see \ref{section E_k study - Ccase}). We label each case as in Figure \ref{cases for E_k - Complex case}. 
\end{minipage}
\begin{minipage}{0.05\textwidth}
\end{minipage}
\begin{minipage}{0.35\textwidth}
\begin{center}
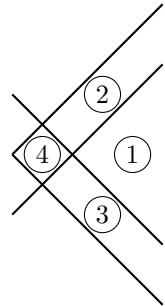

\begin{tikzpicture}[scale=0.4]
\draw [-,thick] (0,0) to (5,5);
\draw [-,thick] (0,0) to (5,-5);
\draw [-,thick] (0,-2) to (5,3);
\draw [-,thick] (0,2) to (5,-3);
\node[draw,circle] (D)at(4,0) {\footnotesize 1};
\node[draw,circle] (D)at(3,2) {\footnotesize 2};
\node[draw,circle] (D)at(3,-2) {\footnotesize 3};
\node[draw,circle] (D)at(1,0) {\footnotesize 4};
\end{tikzpicture}
\captionof{figure}{Cases for $\E_k$ - $\SU(n,1)$}\label{cases for E_k - Complex case}
\end{center}
\end{minipage}

\begin{Lemma}\label{Gelf.-Kir.dim/complex case}
The Gelfand-Kirillov dimension of the residue representation $\E_k$ is $\left\{\begin{array}{cl}
    2n-1 & \text{in case }\emph{\circled{1}}\\
    n & \text{in case }\emph{\circled{2}}\\
    n & \text{in case }\emph{\circled{3}}\\
    0 & \text{in case }\emph{\circled{4}}
\end{array}\right.$.
\end{Lemma}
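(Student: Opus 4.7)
My plan is to adapt the argument of Lemma~\ref{Gelf.-Kir.dim/real} case by case, using Vogan's characterization \cite[Theorem 1.2]{VoganGelfKirill}: if $N_{\E_k}(t)$ denotes the sum of the dimensions of those $K$-types of $\E_k$ whose eigenvalue for the Casimir $\Omega_\Ak$ of $\Ak$ is at most $t^2$, then the exponent $d$ such that $N_{\E_k}(t) \sim t^{d}$ as $t \to \infty$ is the Gelfand--Kirillov dimension of $\E_k$. Case~\circled{4} is immediate: there $\E_k$ is finite-dimensional by Theorem~\ref{Thmintro2}, so $N_{\E_k}(t)$ is eventually constant and the Gelfand--Kirillov dimension is $0$.

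For the remaining cases, the $K$-types of $\E_k$ are those $\Hi^{m_1,m_2}(\C^n)\otimes \Hi^l(\C)$ satisfying the constraint $m_1 - m_2 + l = 0$ from \eqref{eq Ktypes H_lambda_k} together with the additional inequalities identifying the relevant constituent in Figure~\ref{complexKtypes}. Eliminating $l = m_2 - m_1$ reduces the enumeration to a sum over pairs $(m_1,m_2)$. The eigenvalue of $\Omega_\Ak$ on such a $K$-type is $\langle \mu, \mu + 2\rho_K\rangle$ with $\mu = m_1\epsilon_1 - m_2\epsilon_n - (m_2 - m_1)\epsilon_{n+1}$; this is a positive definite quadratic form in $(m_1, m_2)$ whose leading term is of order $m_1^2 + m_2^2$. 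Hence the bound ``Casimir at most $t^2$'' translates, for large $t$, to $m_1, m_2 \lesssim t$.

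The Weyl dimension formula applied to the irreducible representation of $U(n)$ with highest weight $(m_1, 0, \ldots, 0, -m_2)$ yields
\begin{equation*}
\dim \Hi^{m_1,m_2}(\C^n) = \frac{m_1+m_2+n-1}{n-1}\binom{m_1+n-2}{n-2}\binom{m_2+n-2}{n-2},
\end{equation*}
which for $m_1,m_2 \to \infty$ is comparable to $(m_1+m_2)\,m_1^{n-2} m_2^{n-2}$. In Case~\circled{1}, the description in section~\ref{section E_k study - Ccase} shows that the $K$-types are indexed by pairs with $m_1 \geq k+1$ and $m_2 \geq k+1$ (the inequality $|m_1-m_2| \leq m_1+m_2-2k-2$ being equivalent to both $m_i \geq k+1$); approximating the double sum by the corresponding integral gives $N_{\E_k}(t) \asymp t^{2n-1}$, so the Gelfand--Kirillov dimension is $2n-1$. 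In Cases~\circled{2} and~\circled{3}, the ``North'' and ``South'' constituents force one of the indices to be bounded above by $k$ while the other ranges up to order $t$, so one integration collapses to a bounded sum and we obtain $N_{\E_k}(t) \asymp t^{n}$, giving Gelfand--Kirillov dimension $n$.

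The main technical obstacle is extracting precisely which pairs $(m_1,m_2)$ index the $K$-types of $\E_k$ in each constituent from the pictorial subquotient descriptions of Figure~\ref{complexKtypes}, and checking that after the substitution $l = m_2 - m_1$ the quadratic form $\langle \mu, \mu+2\rho_K\rangle$ remains positive definite on $(m_1,m_2)$ (so that the Casimir bound cuts out a genuine two-dimensional region and not a degenerate one). Once these two points are settled, only the leading exponents of the asymptotic integrals matter, and the conclusion follows by the same elementary estimates used in Lemma~\ref{Gelf.-Kir.dim/real}.
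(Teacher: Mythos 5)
Your proposal is correct and takes essentially the same route as the paper: case \circled{4} by finite-dimensionality, and for the other cases Vogan's $K$-type counting criterion with the Casimir cutoff, after identifying the $K$-types of the relevant constituent (both $m_1,m_2\geq k+1$ in case \circled{1}, one index bounded by $k$ in cases \circled{2} and \circled{3}), which yields the exponents $2n-1$, $n$, $n$. The only difference is bookkeeping: you sum directly over $(m_1,m_2)$ using the closed-form Weyl dimension formula, whereas the paper works in the $(m,l)=(m_1+m_2,\,m_2-m_1)$ coordinates and telescopes the dimension sum along each line of fixed $l$ inside the Casimir ellipse; both give the same leading powers of $t$.
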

\begin{proof}
In case $\circled{4}$, the representation $\E_k$ is finite-dimensional. The Gelfand-Kirillov dimension is then 0. For the three other cases, we need some computations.

First of all, one can compute the eigenvalue of each $K$-type $\tau_{m,l}$ in $\Hi_{\lambda_k\alpha}$ for the Casimir operator $\Omega_\Ak$ of $\Ak$ (using for example \cite[Proposition 10.6]{Hall}). Recall from \ref{section E_k study - Ccase} that \begin{equation}
    \tau_{m,l} \simeq \Hi^{\frac{m-l}{2},\frac{m+l}{2}}(\C^n)\times \Hi^l(\C)
\end{equation}
with highest weight $\mu_{m,l} = \frac{m-l}{2} \epsilon_1 - \frac{m+l}{2} \epsilon_n + l \epsilon_{n+1}$. Then one finds:
\begin{equation}\label{eigenvalue for Casimir - Complex case}
    \tau_{m,l}(\Omega_\Ak) = \frac{1}{2}\Big((m+n-1)^2 - (n-1)^2 +3l^2\Big) \Id
\end{equation}
The dimension of the space of $\tau_{m,l}$ follows for example from the Weyl dimension formula \cite[Theorem 5.84]{Knapp-LieGroups}: \begin{equation*}
    d_{m,l} = \binom{\frac{m-l}{2}+n-1}{n-1} \binom{\frac{m+l}{2}+n-1}{n-1} - \binom{\frac{m-l}{2}+n-2}{n-1} \binom{\frac{m+l}{2}+n-2}{n-1} 
\end{equation*}
Let $\delta({m,l,N}) := \binom{\frac{m-l}{2}+N-1}{n-1}\binom{\frac{m+l}{2}+N-1}{n-1}$. So \begin{equation}
    d_{m,l} = \delta({m,l,n}) - \delta({m,l,n-1})~.
\end{equation}
We compute the sum $N_{\E_k}(t)$ of the dimensions $d_m$ of $\tau_m$ until the eigenvalue \eqref{eigenvalue for Casimir - Complex case} exceeds a fixed real number $t^2$. This sum depends on $t$ and we want to know what is the equivalent power of $t$ to use \cite[Theorem 1.2]{VoganGelfKirill}. 
First one can see, that for a fixed $l$, the sum between two values $m_{\min{}}$ and $m_{\max{}}$ of $m$ is telescopic:
\begin{equation}\label{sum of dimensions - complex case}
    \sum_{m=m_{\min{}}}^{m_{\max{}}} d_{m,l} = \delta({m_{\max{}},l,n}) - \delta({m_{\min{}},l,n-1})
\end{equation}
The inequality given by the eigenvalue of the Casimir operator \eqref{eigenvalue for Casimir - Complex case} limits the area of points of $K$-types by an ellipsis. The goal is to know the behaviour in $t$ of the sum over th $K$-types inside the ellipsis when this $t$-ellipsis is going to infinity. 

\begin{minipage}{0.49\textwidth}
\underline{Case $\circled{1}$}: The sum \eqref{sum of dimensions - complex case} is telescopic on every (gray) line for a fixed $l$, so it is just the difference $\delta({m_{\max{}},l,n}) - \delta({m_{\min{}},l,n-1})$ where $m_{\min{}}$ and $m_{\max{}}$ are respectively the abscissa of the first (circled) and the last (squared) point on the line.
Let $l_{\max{}}$ be the maximal $l$-coordinate of the $K$-types that are indexed by the points in this area.
This number is roughly equal to the coordinate of the intersection between the ellipsis and the line "limiting" the representation.
Computing the intersection, we have $l_{\max{}}(t)$ is asymptotic at infinity to $\frac{t}{2}$. As $m_{\max{}}(l,t)$ is roughly on the ellipsis, we also have that $m_{\max{}}(l,t)$ is asymptotic at infinity to $t^2-3l^2$ on the line with ordinate $l$. We can forget the term $\delta({m_{\min{}},l,n-1})$ because it is a constant in $t$ so it will bring strictly lower powers of $t$ than $\delta({m_{\max{}},l,n})$ in the sum.
We get then (the symbol $\overset{\infty}{\sim}$ means that both sides have the same highest power of $t$) 
\end{minipage}
\begin{minipage}{0.5\textwidth}
\begin{tikzpicture}{scale = 0.5}
\draw (4,5) arc (60:-60:5.5);
\draw [-] (5,5) to (0,0) to (5,-5);
\node[left] at (4,5) {\tiny $t^2 = \frac{1}{2}\Big((m+n-1)^2 - (n-1)^2 +3l^2\Big)$};
\draw [-,thick, light-gray] (-0.5,0) to (7,0);
\draw [-,thick, light-gray] (0,0.5) to (6.9,0.5);
\draw [-,thick, light-gray] (0,-0.5) to (6.9,-0.5);
\draw [-,thick, light-gray] (0.5,1) to (6.8,1);
\draw [-,thick, light-gray] (0.5,-1) to (6.8,-1);
\draw [-,thick, light-gray] (1,1.5) to (6.6,1.5);
\draw [-,thick, light-gray] (1,-1.5) to (6.6,-1.5);
\draw [-,thick, light-gray] (1.5,2) to (6.5,2);
\draw [-,thick, light-gray] (1.5,-2) to (6.5,-2);
\draw [-,thick, light-gray] (2,2.5) to (6.4,2.5);
\draw [-,thick, light-gray] (2,-2.5) to (6.4,-2.5);
\draw [-,thick, light-gray] (2.5,3) to (6.3,3);
\draw [-,thick, light-gray] (2.5,-3) to (6.3,-3);
\draw [-,thick, light-gray] (3,3.5) to (6.2,3.5);
\draw [-,thick, light-gray] (3,-3.5) to (6.2,-3.5);
\draw [-,thick, light-gray] (3.5,4) to (6.1,4);
\draw [-,thick, light-gray] (3.5,-4) to (6.1,-4);
\draw [-,thick, light-gray] (4,4.5) to (6,4.5);
\foreach \ordo in {0,0.5,1,1.5,2,2.5,3} {\foreach \absci in {6,5.5,...,\ordo} {\node at (\absci,\ordo) {$\bullet$};}}
\foreach \ordo in {-2.5,-2,-1.5,-1,-0.5} {\foreach \absci in {6,5.5,...,-\ordo} {\node at (\absci,\ordo) {$\bullet$};}}
\foreach \absci in {5.5,5,...,3.5} {\node at (\absci,3.5) {$\bullet$};}
\foreach \absci in {5,4.5,4} {\node at (\absci,4) {$\bullet$};}
\foreach \absci in {5.5,5,...,3} {\node at (\absci,-3) {$\bullet$};}
\foreach \absci in {5,4.5,...,3.5} {\node at (\absci,-3.5) {$\bullet$};}
\foreach \absci in {4.5,4} {\node at (\absci,-4) {$\bullet$};}
\foreach \ordo in {-1,-0.5,0,0.5,1,1.5} {\node at (6.5,\ordo) {$\bullet$};\node at (6.5,\ordo) {$\square$};}
\foreach \ordo in {-2.5,-1.5,-2,2,2.5,3} {\node at (6,\ordo) {$\bullet$};\node at (6,\ordo) {$\square$};}
\foreach \Point in {(0,0), (1,1), (2,2),(3,3),(4,4), (1,-1), (2,-2),(3,-3),(4,-4),(0.5,0.5),(0.5,-0.5), (1.5,1.5), (2.5,2.5),(3.5,3.5),(4.5,4.5), (1.5,-1.5), (2.5,-2.5),(3.5,-3.5)}{
    \node at \Point {$\bullet$};\node at \Point {$\bigcirc$};}
\foreach \Point in {(5.5,3.5), (5,4), (5.5,-3), (5,-3.5), (4.5,-4)}{\node at \Point {$\bullet$};\node at \Point {$\square$};}
\end{tikzpicture}
\end{minipage}
\begin{equation*}
    N_{\E_k}(t) \overset{\infty}{\sim} \displaystyle\sum_{l=-l_{\max{}}(t)}^{l_{\max{}}(t)}  \delta({m_{\max{}}(l,t),l,n}) 
                \overset{\infty}{\sim} \sum_{|l|\leq t/2}   \frac{\prod_{\tiny j=1}^{\tiny n-1} \left(\frac{(m_{\max{}}(l,t)+n-1-j)^2}{4}-\frac{l^2}{4}\right)}{(n-1)!}
\end{equation*}
And replacing $m_{\max{}}$:
\begin{equation*}
    N_{\E_k}(t) \overset{\infty}{\sim} \sum_{0\leq l\leq t/2}   (t^2 -4l^2)^{n-1}  \overset{\infty}{\sim} t^{2n-1}
\end{equation*}
The Gelfand-Kirillov dimension follows from Theorem 1.2 in \cite{VoganGelfKirill}.

For the cases $\circled{2}$ and $\circled{3}$ (which are symmetric), the reasoning is exactly the same and give \begin{equation*}
    N_{\E_k} \overset{\infty}{\sim} t^n ~.
\end{equation*}
\end{proof}

The following Lemma gives us the different nilpotent orbits in the nilradical of $\g$.

\begin{Lemma}\label{definition of n1 and n2}
There are four nilpotent orbits in $\SU(n,1)$:
\begin{enumerate}
    \item the trivial orbit,
    \item the one generated by $\g_{\alpha/2}$, of dimension $4n-2$,
    \item the one generated by $n_1 = i\begin{pmatrix} 1 &0&-1\\0&$\textbf{$0$}$ &0\\1&0&-1\end{pmatrix}$, of dimension $2n$,
    \item the one generated by $n_2 = i\begin{pmatrix} 1 &0&-1\\0&$\textbf{$0$}$ &0\\1&0&-1\end{pmatrix}$, of dimension $2n$.
\end{enumerate}
Here the \textbf{$0$} in the center of the matrix is the $(n-1)\times(n-1)$ zero matrix. 
\end{Lemma}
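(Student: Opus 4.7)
The approach is to combine the Jordan-form classification of complex nilpotent orbits in $\sL(n+1,\C)$ with the signed Young diagram description of real nilpotent orbits in $\su(p,q)$ provided by Theorem 9.3.3 of \cite{CollinMcGov}: real nilpotent $\SU(p,q)$-orbits are parameterized by Young diagrams of $p+q$ boxes filled with $p$ plus signs and $q$ minus signs, with signs alternating along each row, modulo permutations of rows of the same length. The case $(p,q)=(n,1)$ has only one minus sign available, and I would exploit this scarcity to drastically restrict the admissible shapes.

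Specifically, no row can contain two minus signs, which rules out every row of length $\geq 4$ and forces a length-$3$ row to read $+-+$. The admissible partitions of $n+1$ are therefore only three:
\begin{itemize}
\item $[1,1,\dots,1]$, with the unique filling, yielding the zero orbit;
\item $[2,1,\dots,1]$, whose length-$2$ row may read $+-$ or $-+$, yielding two distinct orbits;
\item $[3,1,\dots,1]$, whose length-$3$ row is forced to read $+-+$, yielding one orbit.
\end{itemize}
This accounts for exactly $1+2+1=4$ orbits, matching the count in the lemma.

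I would then compute the dimensions using the classical formula $\dim \mathcal{O}_\lambda = N^2 - \sum_i(\lambda_i^t)^2$ for $\sL(N,\C)$-orbits, together with the Kostant--Sekiguchi fact that the real dimension of each $\su(n,1)$-orbit coincides with the complex dimension of its enveloping $\SL(n+1,\C)$-orbit. For $\lambda=[2,1^{n-1}]$ the transpose partition is $[n,1]$, giving dimension $(n+1)^2-n^2-1=2n$; for $\lambda=[3,1^{n-2}]$ the transpose is $[n-1,1,1]$, giving dimension $(n+1)^2-(n-1)^2-2=4n-2$. These figures agree with the ones asserted in the statement.

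It remains to identify each abstract orbit with a concrete representative. The matrices $n_1$ and $n_2$ are rank-one and square to zero, hence have Jordan type $[2,1^{n-1}]$, so each lies in one of the two minimal orbits of dimension $2n$; to check that they actually lie in \emph{different} $\SU(n,1)$-orbits I would evaluate the Hermitian form of signature $(n,1)$ on a generator of the image of $n_j$, the resulting sign being precisely the invariant distinguishing the patterns $+-$ and $-+$. For the remaining orbit, any nonzero $X\in\g_{\alpha/2}$ satisfies $[\theta X, X]\in\g_\alpha\setminus\{0\}$, and the resulting $\sL(2,\C)$-triple forces $X$ to act as a nilpotent of order three on the defining representation of $\SU(n,1)$, so its Jordan type is $[3,1^{n-2}]$ and its orbit is the one of dimension $4n-2$. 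I expect this last matching step to be the main obstacle: while the enumeration and the dimension calculations are essentially mechanical, pinning down which of the signed diagrams $+-$ and $-+$ corresponds to $n_1$ and which to $n_2$ requires a careful signature computation on each Jordan chain.
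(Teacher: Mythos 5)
Your enumeration of the signed Young diagrams via Theorem 9.3.3 of Collingwood--McGovern and your dimension count are exactly the paper's route (the paper cites the same theorem and gets the dimensions from its Corollary 6.1.4), so up to that point the proposal matches the intended proof. The genuine problem is the step you yourself flag as the main obstacle: the invariant you propose for separating the two square-zero orbits does not work. For a rank-one $X\in\su(n,1)$ with $X^2=0$, skewness with respect to the Hermitian form $h$ gives $h(Xv,Xv)=-h(v,X^2v)=0$, so the image of $n_j$ is an $h$-isotropic line and ``the value of $h$ on a generator of the image'' is identically zero; it cannot distinguish the patterns $+-$ and $-+$. The invariant that does the job is the sign of $i\,h(X v,v)$ for any $v$ with $Xv\neq 0$ (this quantity is purely imaginary, nonzero on the length-two Jordan block, and its sign is unchanged under conjugation and under rescaling of $v$); equivalently, the position of the minus sign in the length-two row. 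The paper avoids the form altogether and instead argues with the Iwasawa decomposition $G=KAN$: conjugation by $K$ cannot alter the relevant corner entry of the matrix, conjugation by $A$ multiplies it by a positive factor, and conjugation by $N$ leaves $n_1$, $n_2$ untouched, so the two representatives lie in distinct orbits. Either repair is short, but as written your distinguishing step would fail.

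A smaller slip: to place $\g_{\alpha/2}$ in the $[3,1^{n-2}]$ orbit you invoke $[\theta X,X]\in\g_\alpha\setminus\{0\}$, which is impossible since $[\g_{-\alpha/2},\g_{\alpha/2}]\subset\g_0=\m\oplus\Aa$. The conclusion you want is nevertheless true and is most easily seen in the matrix realization (for $X\in\g_{\alpha/2}$ one has $X^2\neq 0$, $X^3=0$, forced by the relation $[\g_{\alpha/2},\g_{\alpha/2}]=\g_\alpha$), or, as in the paper, by observing that the $+-+$ diagram is the unique one of nilpotency order three, while the argument that $\g_{\alpha/2}$ and $\g_{-\alpha/2}$ give the same orbit is the $MA$- and $\theta$-conjugation argument already used for $\Spin(2n,1)$.
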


\begin{proof}
Using Theorem 9.3.3 in \cite{CollinMcGov}, one finds that there are four possible Young diagrams corresponding to the nilpotent orbit for $\su(n,1)$:
\begin{center}\begin{ytableau}
   + \\
   + \\
   + \\
   \none [\vdots] \\
   + \\
   -
\end{ytableau}\hspace{30mm}
\begin{ytableau}
      +  & -& +\\
   + & \none  & \none \\
   +& \none  & \none \\
   + & \none  & \none \\
   \none [\vdots] & \none  & \none \\
   + & \none  & \none 
\end{ytableau}
\hspace{30mm}
\begin{ytableau}
      +  & -\\
   + & \none   \\
   +& \none   \\
   + & \none   \\
   \none [\vdots] & \none   \\
   + & \none 
\end{ytableau}
\hspace{30mm}
\begin{ytableau}
      -  & +\\
   + & \none   \\
   +& \none   \\
   + & \none   \\
   \none [\vdots] & \none   \\
   + & \none 
\end{ytableau}
\end{center}
The first one corresponds to the zero orbit. The second one is the only one which is nilpotent of degree 2. It corresponds to the orbit generated by any element of $\g_{\alpha/2}$. In fact, the proof of the fact that $\g_{\alpha/2}$ and $\g_{-\alpha/2}$ are in the same orbit is exactly the same as the one in Lemma \ref{Lemma number of orbits in Spin}. \newline
The two last ones are nilpotent of degree 1. Computations show that $n_1$ and $n_2$ (defined in the Lemma) cannot be conjugated by an element of $K$. In particular, $K$ cannot change the last number on the last line and the last column. The conjugation by $A$ is a multiplication by a positive factor and the conjugation by $N$ doesn't affect $n_1$ or $n_2$. So $n_1$ and $n_2$ are each representative of one nilpotent orbit of degree 1. 

The dimensions are given by Corollary 6.1.4 in \cite{CollinMcGov}. Recall that the partition of the complex nilpotent orbit coming from a real nilpotent orbit is given by the boxes of the corresponding Young diagram. 
\end{proof}

We are now able to prove the results about the wave front set of $\E_k$ in Theorem \ref{Thmintro2} for $\SU(n,1)$. 

\begin{proof}[Wave front set of $\E_k$]
The two lemmas above conclude the cases $\circled{1}$ and $\circled{4}$. Now we have to know which the cases $\circled{2}$ and $\circled{3}$ correspond to the nilpotent orbit generated by $n_1$ or the one generated by $n_2$. They have the same dimension, so we need to have more information about the wave front set. We will use the projection over $K$ to figure it out.
In fact, \cite[Proposition 2.3]{HoweWFS} ensures us that \begin{equation}
    \WF(\E_k| _K) = \Ad^*(K) \big(-\mathop{AC}(\supp \E_k)\big)
\end{equation}
where $\mathop{AC}$ the asymptotic cone and $\supp \rho$ is the set of highest weights of $\E_k$. Proposition 2.5 gives us:
\begin{equation}
    \WF(\E_k| _K) = q(\WF \E_k)
\end{equation}
where $q$ is the projection from $\g^*$ onto $\Ak^*$.
We write $\E_k^2$ or $\E_k^3$ if $\E_k$ is respectively in case $\circled{2}$ or in case $\circled{3}$ (see figure \ref{cases for E_k - Complex case}).
We have:
\begin{equation*}
    -\mathop{AC}(\supp \E_k^2) \ni -\mu_{1,1} = \epsilon_n - \epsilon_{n+1} \text{ and } -\mathop{AC}(\supp \E_k^3) \ni \mu_{1,-1} = -\epsilon_1 + \epsilon_{n+1}
\end{equation*}
Moreover,
\begin{align*}
    q\circ B :~~ & n_1 \mapsto (2n+2) (-\epsilon_1 +\epsilon_{n+1})\\
               & n_2 \mapsto (2n+2) (\epsilon_1 -\epsilon_{n+1})
\end{align*}
This gives the result, as $K$ sends by the coadjoint action $\epsilon_n - \epsilon_{n+1}$ to $\epsilon_1 - \epsilon_{n+1}$.
\end{proof}

\begin{minipage}{0.5\textwidth}
\subsection{Case of $\Sp(n,1)$}
In this section, we prove the results about wave front set in Theorem \ref{Thmintro2} for $G = \Sp(n,1)$. First of all, we compute the Gelfand-Kirillov dimension of the Harish-Chandra module of $\E_k$ defined in section \ref{section residue repr}. Recall that there are 3 possibilities of residue representation (see \ref{section E_k study - Qcase}). We label each case as in Figure \ref{cases for E_k - Quater case}. The proofs are very similar to the complex case. So we will be less specific. 
\end{minipage}
\begin{minipage}{0.5\textwidth}
\begin{center}
\begin{tikzpicture}[scale=0.8]
\draw [-,thick] (0,0) to (5,5);
\draw [-,thick] (0,0) to (5,0);
\draw [-,thick] (3,0) to (5,2);
\draw [-,thick] (0,2.5) to (2.5,0);
\node[draw,circle] (D)at(5,0.75) {\footnotesize 1};
\node[draw,circle] (D)at(3.5,2) {\footnotesize 2};
\node[draw,circle] (D)at(1.25,0.5) {\footnotesize 3};
\end{tikzpicture}
\captionof{figure}{Cases for $\E_k$ - $\Sp(n,1)$}\label{cases for E_k - Quater case}
\end{center}
\end{minipage}

\begin{Lemma}\label{Gelf.-Kir.dim/quater case}
The Gelfand-Kirillov dimension of the residue representation $\E_k$ is $\left\{\begin{array}{cl}
    4n-1 & \text{in case 1}\\
    2n+1 & \text{in case 2}\\
    0 & \text{in case 3}\\
\end{array}\right.$
\end{Lemma}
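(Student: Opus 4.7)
The plan is to adapt the three-step approach of Lemma~\ref{Gelf.-Kir.dim/complex case} to the quaternionic setting. In case 3, the representation $\E_k$ is the West constituent, spanned only by the finitely many $K$-types $V^{m_1,m_2}_p\otimes V^l_1$ with $m_1+m_2+l\leq 2k+2$, so $\E_k$ is finite-dimensional and its Gelfand-Kirillov dimension equals $0$.

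For cases 1 and 2, where $\E_k$ is infinite-dimensional, the plan is to (i) compute the eigenvalue of $\Omega_\Ak$ on $V^{m_1,m_2}_p\otimes V^l_1$ using \cite[Proposition~10.6]{Hall} applied to each factor of $\Ak=\Asp(1)\oplus\Asp(n)$, obtaining a quadratic polynomial in $(m_1,m_2,l)$ with leading part $m_1^2+m_2^2+l^2$; (ii) compute $d_{m_1,m_2,l}=\dim V^{m_1,m_2}_p\cdot(l+1)$ via the Weyl dimension formula, noting that the $\Sp(n)$-factor is a polynomial of total degree $4n-4$ in $(m_1,m_2)$ with leading term proportional to $(m_1^2-m_2^2)(m_1m_2)^{2n-3}$, while $(l+1)$ contributes degree $1$ in $l$; and (iii) sum these dimensions over the $K$-types of $\E_k$ with $\tau(\Omega_\Ak)\leq t^2$, then extract the leading exponent of $N_{\E_k}(t)$ as $t\to\infty$, which by \cite[Theorem~1.2]{VoganGelfKirill} gives the Gelfand-Kirillov dimension.

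The geometry of the summation region differs between the two cases. In case 1, the East constituent constraints $m_1+m_2\geq 2k+4$ and $l\leq m_1+m_2-(2k+4)$ leave a genuinely three-dimensional region, producing $N_{\E_k}(t)\sim t^{4n-1}$. In case 2, the constraint $l>m_1+m_2-(2k+4)$ defining the North constituent, combined with the basic constraint $l\leq m_1+m_2$, pins $l$ to a ribbon of bounded width around $l=m_1+m_2$; together with the structure of the $\g$-action described in \cite[Lemma~5.4]{HoweTan}, which further limits the effective range of $m_1-m_2$ in each fiber, this reduces the effective dimension and yields $N_{\E_k}(t)\sim t^{2n+1}$.

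The main obstacle is the bookkeeping of the sum in case 2: unlike the complex setting, where the binomial form of $\dim\Hi^{m_1,m_2}(\C^n)$ permitted a clean telescoping, the dimension polynomial of $V^{m_1,m_2}_p$ is less factorizable, and each fiber $(m,l)$ of the parameter space carries multiple $K$-types indexed by $m_1-m_2$; the asymptotic analysis requires careful tracking of these multiplicities alongside the constituent constraints before the leading power of $t$ can be extracted.
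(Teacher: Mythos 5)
Your overall strategy is the same as the paper's (Casimir eigenvalue on each $K$-type, Weyl dimension formula, asymptotics of $N_{\E_k}(t)$, then \cite[Theorem 1.2]{VoganGelfKirill}), and your treatment of case 3 is fine. The genuine gap is in the $K$-type bookkeeping for cases 1 and 2: you treat the $K$-spectrum of $\Hi_{\lambda_k\alpha}$ as a three-parameter family $(m_1,m_2,l)$, with every value of $m_1-m_2$ occurring over each point $(m,l)=(m_1+m_2,l)$. In fact the spherical principal series of $\Sp(n,1)$ is multiplicity free over $K$: its restriction to $K$ is $\Ind_M^K\triv$ with $M=\Sp(n-1)\times\Sp(1)$ embedded with the $\Sp(1)$-factor diagonal, and Frobenius reciprocity (equivalently the decomposition of harmonics on $\Hh^n$) forces $l=m_1-m_2$, so the $K$-types form a \emph{two}-parameter family. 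This is not a cosmetic point, because the multiplicity enters exactly at the level of the leading exponent: summing $\dim V^{m_1,m_2}_p\cdot(l+1)$ (total degree $4n-3$) over a genuinely three-dimensional region of size $t$ gives $N_{\E_k}(t)\sim t^{4n}$ in case 1 — which would even exceed $\dim\n=4n-1$, the largest possible Gelfand--Kirillov dimension for a subquotient of a principal series — and in case 2 the bounded ribbon in $l$ still leaves a two-dimensional $(m_1,m_2)$-region, giving $t^{4n-1}$, not $t^{2n+1}$. The appeal to the $\g$-action of \cite[Lemma 5.4]{HoweTan} to ``limit the effective range of $m_1-m_2$'' cannot repair this: the constituents are unions of whole $K$-isotypic components, so the $\g$-action does not remove $K$-types from the count; the reduction has to come from the multiplicity-free structure itself.

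Once $l=m_1-m_2$ is imposed, the computation goes through as in the paper: parametrize the $K$-types by $(m,l)=(m_1+m_2,m_1-m_2)$ with $m\geq l\geq 0$ and $m\equiv l\ (\mathrm{mod}\ 2)$; the single $K$-type at $(m,l)$ has dimension $d_{m,l}$ carrying the factor $(l+1)^2$ (the factor $m_1-m_2+1$ from the $\Sp(n)$-part times $\dim V^l_1$), and the sum over $m$ at fixed $l$ telescopes. In case 1 the East condition $l\leq m-(2k+4)$ leaves a two-dimensional region inside the Casimir ellipse and the degree-$(4n-3)$ summand yields $t^{4n-1}$; in case 2 the North condition $l>m-(2k+4)$ is equivalent to $m_2=(m-l)/2\leq k+2$, so the sum is effectively one-dimensional with summand of order $m^{2n}$, yielding $t^{2n+1}$. (Your stated leading part $m_1^2+m_2^2+l^2$ of the Casimir eigenvalue is also slightly off — the two simple factors of $\Ak$ give different relative coefficients, as in \eqref{eigenvalue for Casimir - Quater case} — but that does not affect the exponents; the multiplicity issue is the real gap.)
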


\begin{proof}
First of all, one can compute the eigenvalue on each $K$-type $\tau_{m,l}$ in $\Hi_{\lambda_k\alpha}$ for the Casimir operator $\Omega_\Ak$ of $\Ak$ (using for example \cite[Proposition 10.6]{Hall}). Recall from \ref{section E_k study - Qcase} that $\tau_{m,l}$ has highest weight $\mu_{m,l} = \frac{m+l}{2} \epsilon_1 - \frac{m-l}{2} \epsilon_n + l \epsilon_{n+1}$. Then one finds:
\begin{equation}\label{eigenvalue for Casimir - Quater case}
    \tau_{m,l}(\Omega_\Ak) = \frac{1}{2}\Big((m+2n-1)^2 - (2n-1)^2 +3(l+1)^2 -1\Big) \Id
\end{equation}
The dimension of the space of $\tau_{m,l}$ follows for example from the Weyl dimension formula \cite[Theorem 5.84]{Knapp-LieGroups}: \begin{equation*}
    d_{m,l} = \frac{(l+1)^2}{2n-1}\left(\binom{\frac{m-l}{2}+2n-2}{2n-2} \binom{\frac{m+l}{2}+2n-1}{2n-2} - \binom{\frac{m-l}{2}+2n-3}{2n-2} \binom{\frac{m+l}{2}+2n-2}{2n-2} \right)
\end{equation*}
Setting $\delta({m,l,N}) := \binom{\frac{m-l}{2}+N}{2n-2}\binom{\frac{m+l}{2}+N+1}{2n-2}$ we have \begin{equation}
    d_{m,l} = \frac{(l+1)^2}{2n-1}\big(\delta({m,l,2n-2}) - \delta({m,l,2n-1})\big)
\end{equation}

As before, for a fixed $l$, the sum of $d_{m,l}$ between the values $m_{\min{}}$ and $m_{\max{}}$ of $m$ is telescopic:
\begin{equation}
    \sum_{m=m_{\min{}}}^{m_{\max{}}} d_{m,l} = \frac{(l+1)^2}{2n-1}\big(\delta({m_{\max{}},l,n}) - \delta({m_{\min{}},l,n-1})\big)
\end{equation}
The inequality given by the eigenvalue of the Casimir operator \ref{eigenvalue for Casimir - Complex case} limits the area of points of $K$-types by an ellipsis. The goal is to know the behaviour in $t$ of the sum over the $K$-types delimited by this ellipsis when this ellipsis is going to infinity. The method to compute the $N_{\E_k}(t)$ out of \cite[Theorem 1.2]{VoganGelfKirill} is exactly the same as the $\SU(n,1)$ case and left to the reader. 
\end{proof}

The following lemma gives us the different nilpotent orbits in the nilradical of $\g$.

\begin{Lemma}
There are three nilpotent orbits in $\Asp(n,1)$:
\begin{enumerate}
    \item the trivial orbit,
    \item the orbit generated by $\g_{\alpha/2}$, of dimension $8n-2$,
    \item the orbit generated by $\g_{\alpha}$, of dimension $4n+2$.
\end{enumerate}
\end{Lemma}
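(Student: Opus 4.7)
The plan is to adapt the strategy of Lemmas \ref{Lemma number of orbits in Spin} and \ref{definition of n1 and n2}: invoke the classification of nilpotent orbits in the real form $\mathfrak{sp}(p,q)$ by signed Young diagrams (Theorem 9.3.5 of \cite{CollinMcGov}) with $p=n$, $q=1$, enumerate the admissible diagrams, identify each one with a concrete orbit in the restricted root-space decomposition, and read off the dimensions from Corollary 6.1.4 of the same reference.

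First I would enumerate the admissible signed Young diagrams. The underlying partition must be a symplectic partition of $2(n+1)$ (each odd part having even multiplicity) and the total signature must be $(2n,2)$. The fact that only two boxes carry a $-$, combined with the pairing rules for rows of odd length in \cite[Thm.~9.3.5]{CollinMcGov}, leaves exactly three admissible partitions: $[1^{2n+2}]$, $[2,2,1^{2n-2}]$, and $[3,3,1^{2n-4}]$. The first corresponds to the zero orbit.

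Second, I would identify the other two partitions with the orbits through $\g_{\alpha}$ and $\g_{\alpha/2}$. A non-zero $X\in \g_\alpha$ acts as a square-zero endomorphism on the defining quaternionic representation (because no root $2\alpha$ exists) and is of rank two there, forcing the Jordan type $[2,2,1^{2n-2}]$. A non-zero $Y\in \g_{\alpha/2}$ instead satisfies $[Y,Y']\in \g_\alpha\neq 0$ for generic $Y'\in \g_{\alpha/2}$, so its nilpotency index is three; combined with the analogous rank count this yields Jordan type $[3,3,1^{2n-4}]$. Finally, $MA$ acts transitively on $\g_\alpha\setminus\{0\}$ (the factor $\Sp(1)$ of $M$ acts on $\g_\alpha\cong \mathrm{Im}(\Hh)\cong \R^3$ as $\mathrm{SO}(3)$, and $A$ rescales) and on $\g_{\alpha/2}\setminus\{0\}$ (the standard action of $\Sp(n-1)\times \Sp(1)$ on $\Hh^{n-1}$ together with $A$-dilation is transitive on non-zero vectors), so each of these sets is a single $G$-orbit.

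Finally, the dimensions follow from the centralizer formula of Corollary 6.1.4 of \cite{CollinMcGov}: for $X$ with Jordan partition $\lambda$ in $\mathfrak{sp}_{n+1}(\C)$,
\begin{equation*}
\dim \mathfrak{z}(X) = \tfrac{1}{2}\sum_i (\lambda'_i)^2 + \tfrac{1}{2}\#\{i : \lambda_i \text{ odd}\},
\end{equation*}
where $\lambda'$ is the transposed partition. Subtracting from $\dim_\C \mathfrak{sp}_{n+1}(\C) = (n+1)(2n+3)$ gives orbit dimensions $4n+2$ for $[2,2,1^{2n-2}]$ and $8n-2$ for $[3,3,1^{2n-4}]$, matching the statement. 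The main obstacle is the combinatorial first step: carefully verifying that no other symplectic partition of $2(n+1)$ admits a signing of signature $(2n,2)$ compatible with the pairing conditions of the classification theorem; once this is dispensed with, the identification of the geometric representatives and the dimension computations are direct.
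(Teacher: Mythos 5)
Your overall strategy is the same as the paper's (enumerate via the signed Young diagram classification in \cite{CollinMcGov}, identify the two nonzero orbits with $\g_\alpha$ and $\g_{\alpha/2}$ through their Jordan types, and get the dimensions from Corollary 6.1.4), and your final data are correct: the complex Jordan types $[2,2,1^{2n-2}]$ and $[3,3,1^{2n-4}]$, the identification of the former with $\g_\alpha$ and the latter with $\g_{\alpha/2}$, the $MA$-transitivity remarks, and the centralizer computation giving $4n+2$ and $8n-2$ are all fine. The problem is the combinatorial step, which you yourself single out as the main obstacle: the criteria you state (symplectic partition of $2(n+1)$, a signing of signature $(2n,2)$, plus ``pairing rules for rows of odd length'') do \emph{not} cut the list down to your three partitions. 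For instance $[2,1^{2n}]$ and $[4,1^{2n-2}]$ are symplectic and admit signings of signature $(2n,2)$ (take the row $+-$ together with one $-$ box and $2n-1$ plus boxes, resp.\ the row $+-+-$ with all remaining boxes $+$), and their only odd rows are rows of length one; no consistent pairing condition on odd rows can exclude these configurations without simultaneously excluding $[1^{2n+2}]$ (which must survive, being the zero orbit) or $[2,2,1^{2n-2}]$. So, as written, your enumeration would produce five candidates, not three, or none at all, depending on how the odd-row rule is read.

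The constraint that actually eliminates $[2,1^{2n}]$ and $[4,1^{2n-2}]$ comes from the quaternionic structure and concerns rows of \emph{even} length: for $\mathfrak{sp}(p,q)$ the classification of \cite{CollinMcGov} is in terms of quaternionic Jordan data, i.e.\ signed diagrams with $p+q$ boxes in which only odd rows carry a meaningful sign; equivalently, in the complex picture every Jordan block must occur with even multiplicity (each $\Hh$-block doubles into two $\C$-rows, odd blocks into a same-sign pair, even blocks into a pair with opposite leading signs). With that bookkeeping the admissible diagrams for signature $(n,1)$ are exactly $[1^{n+1}]$, $[2,1^{n-1}]$ and $[3,1^{n-2}]$ --- these are the diagrams the paper displays --- whose doublings are your three complex partitions, and the rest of your argument then goes through verbatim. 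So the gap is a misstatement of the classification theorem at the decisive step rather than a wrong overall approach; note also that once the count of three is established, the identification can be done more quickly, as in the paper, by observing that the two nonzero orbits are distinguished by nilpotency degree, which already forces every nonzero element of $\g_\alpha$ (resp.\ $\g_{\alpha/2}$) into the orbit of dimension $4n+2$ (resp.\ $8n-2$) without a separate transitivity argument.
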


\begin{proof}
Using Theorem 9.3.5 in \cite{CollinMcGov}, one finds that there are three possible Young diagrams corresponding to the nilpotent orbits for $\Asp(n,1)$:
\begin{center}\begin{ytableau}
   + \\
   + \\
   + \\
   \none [\vdots] \\
   + \\
   -
\end{ytableau}\hspace{30mm}
\begin{ytableau}
      +  & -& +\\
   + & \none  & \none \\
   +& \none  & \none \\
   + & \none  & \none \\
   \none [\vdots] & \none  & \none \\
   + & \none  & \none 
\end{ytableau}
\hspace{30mm}
\begin{ytableau}
      +  & -\\
   + & \none   \\
   +& \none   \\
   + & \none   \\
   \none [\vdots] & \none   \\
   + & \none 
\end{ytableau}
\end{center}
The first one corresponds to the zero orbit. The second is the only one which is nilpotent of degree 2. It corresponds to the orbit generated by any element of $\g_{\alpha/2}$. 
The last one is the only one nilpotent of degree 1. So it is generated by any element of $\g_{\alpha}$. 

The dimensions are given by Corollary 6.1.4 in \cite{CollinMcGov}. Also here we have to recall that the partition of the complex nilpotent orbit coming from a real nilpotent orbit is given by the boxes of the corresponding Young diagram. 
\end{proof}

The combination of the two lemmas is sufficient to conclude the results about wave front set of $\E_k$ in Theorem \ref{Thmintro2} for $\Sp(n,1)$.

\begin{minipage}{0.5\textwidth}
\subsection{Case of $\F_4$}
In this section, we prove the results about wave front set in Theorem \ref{Thmintro2} for $G = F_4$. We first compute the Gelfand-Kirillov dimension of the Harish-Chandra module of $\E_k$ defined in section \ref{section residue repr}. Recall that there are three possible residue representations (see \ref{section E_k study - Ocase}) which we label as in Figure \ref{cases for E_k - Octo case}. The proofs are very similar to the complex case. 
\end{minipage}
\begin{minipage}{0.5\textwidth}
\begin{center}
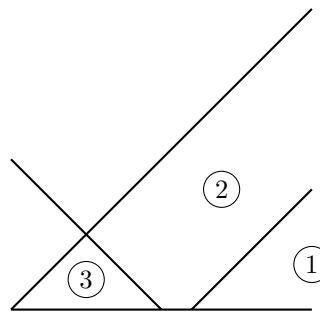

\begin{tikzpicture}[scale=0.8]
\draw [-,thick] (0,0) to (5,5);
\draw [-,thick] (0,0) to (5,0);
\draw [-,thick] (3,0) to (5,2);
\draw [-,thick] (0,2.5) to (2.5,0);
\node[draw,circle] (D)at(5,0.75) {\footnotesize 1};
\node[draw,circle] (D)at(3.5,2) {\footnotesize 2};
\node[draw,circle] (D)at(1.25,0.5) {\footnotesize 3};
\end{tikzpicture}
\captionof{figure}{Cases for $\E_k$ - $\F_4$}\label{cases for E_k - Octo case}
\end{center}
\end{minipage}

\begin{Lemma}\label{Gelf.-Kir.dim/Octo case}
The Gelfand-Kirillov dimension of the residue representation $\E_k$ is $\left\{\begin{array}{cl}
    15 & \text{in case 1}\\
    11 & \text{in case 2}\\
    0 & \text{in case 3}\\
\end{array}\right.$ .
\end{Lemma}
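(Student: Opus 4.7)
The plan is to mirror the argument already carried out for $\SU(n,1)$ and $\Sp(n,1)$, adapted to $K = \Spin(9)$. Case 3 is immediate: from the analysis in section \ref{section E_k study - Ocase}, $\E_k$ in that case is the finite-dimensional West-constituent of $\Hi_{\lambda_k\alpha}$, so its Harish-Chandra module has Gelfand-Kirillov dimension $0$. For cases 1 and 2, I will apply \cite[Theorem 1.2]{VoganGelfKirill}: if
\begin{equation*}
    N_{\E_k}(t) := \sum_{\substack{(p,q) \in R \\ \tau_{p,q}(\Omega_\Ak)\leq t^2}} \dim V^{p,q}
\end{equation*}
has $N_{\E_k}(t) \overset{\infty}{\sim} t^d$, where $R$ is the subset of the $(p,q)$-lattice indexing the $K$-types appearing in $\E_k$ (as described by the constituents in Figure \ref{OctoKtypes}), then the Gelfand-Kirillov dimension equals $d$.

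The first step is to compute the Casimir eigenvalue on $V^{p,q}$. Using the formula $\tau_{p,q}(\Omega_\Ak) = \langle \mu_{p,q}+2\rho_\Ak,\mu_{p,q}\rangle$ with $\mu_{p,q} = \tfrac{p}{2}\epsilon_1+\tfrac{q}{2}(\epsilon_2+\epsilon_3+\epsilon_4)$ and $2\rho_\Ak = (7,5,3,1)$ for $B_4 = \spin(9)$, a direct calculation gives a quadratic expression in $p$ and $q$ of the form $\tfrac14 \big((p+7)^2 + 3(q+3)^2 + c\big)$ up to a constant $c$. Thus the condition $\tau_{p,q}(\Omega_\Ak)\leq t^2$ describes (asymptotically) the interior of an ellipse of size $O(t)$ in the $(p,q)$-plane.

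The second step is to compute $\dim V^{p,q}$ via the Weyl dimension formula. Since $B_4$ has $16$ positive roots, this yields a polynomial $d_{p,q}$ whose leading homogeneous part is of degree $14$ in $(p,q)$ (degree $7$ in each of $p$ and $q$, by the factorisation coming from the four roots depending only on $\epsilon_1$, the three depending only on $\epsilon_i$ ($i=2,3,4$), and the mixed roots). The third step is to carry out the summation. As in the $\Sp(n,1)$ case, for each fixed $q$ the sum over $p$ of $d_{p,q}$ on an interval $[p_{\min},p_{\max}]$ telescopes via the Weyl formula; the leading behaviour in $t$ is then obtained by replacing the remaining sum in $q$ by an integral over the appropriate half-ellipse.

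For case 1 (the East-constituent), the range of summation is $p \geq 2k+8 - q$ with $q\leq p$, intersected with the ellipse of size $t$; asymptotic evaluation of the resulting two-dimensional Riemann sum should give $N_{\E_k}(t) \sim t^{15}$. For case 2 (the North-constituent modulo the East), the range is bounded by the two barriers $2k+2$ and $2k+8$ in the diagonal direction, so one of the two summation variables varies only in a bounded window and the resulting power drops by $4$, yielding $N_{\E_k}(t)\sim t^{11}$. The main obstacle is the careful bookkeeping of the Weyl dimension polynomial for $\Spin(9)$ and the verification that the leading term of the asymptotic integral does not vanish due to some cancellation—I will check this by comparing with the expected dimensions of nilpotent orbits in $\f_4$ (namely $2\cdot 15 = 30 = \dim \mathcal{O}_{\g_{\alpha/2}}$ and $2\cdot 11 = 22 = \dim \mathcal{O}_{\g_\alpha}$ via \cite[Corollary 6.1.4]{CollinMcGov}), which matches the wave front set statement of Theorem \ref{Thmintro2} and gives an independent consistency check.
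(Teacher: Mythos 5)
Your plan follows the paper's own proof almost step for step: case 3 by finite-dimensionality, and cases 1 and 2 by counting $K$-types inside the Casimir ellipse via \cite[Theorem 1.2]{VoganGelfKirill}, using the Weyl dimension formula and a telescoping sum in $p$ for fixed $q$; your Casimir eigenvalue agrees with the paper's $\left(\frac{p+7}{2}\right)^2+\frac{3(q+3)^2}{4}-19$. The one step that is wrong as stated is the degree bookkeeping for $d_{p,q}=\dim V^{p,q}$. For $\mu_{p,q}=\frac p2\epsilon_1+\frac q2(\epsilon_2+\epsilon_3+\epsilon_4)$, the three positive roots $\epsilon_i-\epsilon_j$ ($2\le i<j\le4$) of $\soe(9,\C)$ contribute factors independent of $(p,q)$, so $d_{p,q}$ has total degree $16-3=13$, not $14$; its degree in $p$ is indeed $7$ (the seven roots involving $\epsilon_1$), but its degree in $q$ is $12$, not $7$. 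This matches the paper's explicit expression $d_{p,q}=C(q)\bigl(\delta(p+2,q)-\delta(p,q)\bigr)$, with $\deg C(q)=6$ and the telescoped difference of degree $7$. The point matters because the whole lemma is a growth count: summing a degree-$13$ polynomial over the sector of the $t$-ellipse (on which its leading form does not vanish) gives $t^{2+13}=t^{15}$, consistent with the answer you assert, whereas the degree $14$ you state would give $t^{16}$, i.e.\ Gelfand--Kirillov dimension $16$ in case 1. So your conclusion is right but your count, taken literally, is not.

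In case 2 you assert that ``the power drops by $4$'' because one summation variable is confined to a bounded window; by itself that only accounts for a drop of $1$. The remaining drop of $3$ comes from the fact that on the strip where $p-q$ is bounded by the two barriers, the three Weyl factors attached to the roots $\epsilon_1-\epsilon_j$ (which are comparable to $p-q$) stay bounded, so the restriction of $d_{p,q}$ to the strip has degree $13-3=10$ in the unbounded direction $p+q$, and the sum is $\asymp t^{1+10}=t^{11}$; in the paper's notation, $C(q)\,\delta(p,q)\asymp q^{10}$ when $p-q$ stays bounded. With these two corrections your plan coincides with the paper's proof (which itself leaves the final asymptotics to the reader, referring back to the $\SU(n,1)$ computation). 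The consistency check against the orbit dimensions $30$ and $22$ is a reasonable sanity check, but note it is not independent evidence, since the wave front set statement of Theorem \ref{Thmintro2} is deduced from this lemma.
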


\begin{proof}
As in the complex case, one can compute the eigenvalue on each $K$-type $\tau_{p,q}$ in $\Hi_{\lambda_k\alpha}$ for the Casimir operator $\Omega_\Ak$ of $\Ak$ (using for example \cite[Proposition 10.6]{Hall}). Recall from \ref{section E_k study - Ocase} that $\tau_{p,q}$ has highest weight $\mu_{p,q} = \frac{p}{2} \epsilon_1 + \frac{q}{2} (\epsilon_2 + \epsilon_3 +\epsilon_4)$. Then one finds:
\begin{equation}\label{eigenvalue for Casimir - Octo case}
    \tau_{p,q}(\Omega_\Ak) = \left(\left(\frac{p+7}{2}\right)^2+ \frac{3(q+3)^2}{4} -19 \right) \Id
\end{equation}
The dimension of the space of $\tau_{p,q}$ follows from the Weyl dimension formula \cite[Theorem 5.84]{Knapp-LieGroups}: \begin{equation*}
    d_{p,q} = C(q) (\delta(p+2,q) - \delta(p,q))
\end{equation*}
where $\delta(p,q) := \frac{\left(\frac{p+q}{2}+6\right)!}{\left(\frac{p+q}{2}+2\right)!}\frac{\left(\frac{p-q}{2}+3\right)!}{\left(\frac{p-q}{2})-1\right)!}$ and $C(q) := \frac{(q+1)(q+3)(q+5)(2q+8)(2q+6)(2q+4)}{2^{12} ~ 3^{4} ~ 5^2 ~ 7}$. 
For a fixed $q$, the sum of $d_{p,q}$ between 2 values $p_{\min{}}$ and $p_{\max{}}$ of $p$ is telescopic:
\begin{equation}
    \sum_{p=p_{\min{}}}^{p_{\max{}}} d_{p,q} = C(q)\big(\delta({p_{\max{}}+2,q}) - \delta({p_{\min{}},q})\big)
\end{equation}
The inequality given by the eigenvalue of the Casimir operator \eqref{eigenvalue for Casimir - Octo case} bounds a region of $K$-types by an ellipsis. As in the previous case, we need to know the behaviour in $t$ of the sum limited by this ellipsis when the ellipsis goes to infinity. The method to compute the $N_{\E_k}(t)$ of the \cite[Theorem 1.2]{VoganGelfKirill} is exactly the same as the $\SU(n,1)$ case and is left to the reader.
\end{proof}

The following lemma gives us the different nilpotent orbits in the nilradical of $\g$.

\begin{Lemma}
There are three nilpotent orbits in $\g$:
\begin{enumerate}
    \item the trivial orbit,
    \item the orbit generated by $\g_{\alpha/2}$, of dimension $30$,
    \item the orbit generated by $\g_{\alpha}$, of dimension $22$.
\end{enumerate}
\end{Lemma}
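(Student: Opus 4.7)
The plan is to mirror the strategy used in the $\Sp(n,1)$ case, adapting the arguments to the exceptional group $F_{4(-20)}$. The two tasks are: (i) show that the three listed orbits are pairwise distinct and exhaust all nilpotent adjoint orbits in $\g$, and (ii) compute their dimensions.

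First I would use the Sekiguchi-Djokovi\'c classification of real nilpotent orbits for the noncompact real form $F_{4(-20)}$ (as recorded in Collingwood-McGovern \cite{CollinMcGov}, in the chapter on exceptional real Lie algebras) to obtain the complete list of orbits. Since the restricted root system is $BC_1$ with $\Sigma_+ = \{\alpha/2,\alpha\}$, the nilpotent elements of $\g$ can be taken to lie (up to $K$-conjugation) in $\n = \g_{\alpha/2}\oplus \g_{\alpha}$. I would verify directly that the only $G$-orbits intersecting $\n$ nontrivially are the two candidates named in the statement, together with the zero orbit. The key structural input is that for an element $X\in \g_{\alpha}$ one has $\ad(X)^2 = 0$, whereas for a generic $X\in \g_{\alpha/2}$ one has $\ad(X)^2\neq 0$ (since $[\g_{\alpha/2},\g_{\alpha/2}]\subset \g_\alpha$ is nonzero); this invariant distinguishes the two nonzero orbits and shows they cannot coincide.

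Next I would show that each of $\g_\alpha\setminus\{0\}$ and $\g_{\alpha/2}\setminus\{0\}$ lies in a single orbit. The group $MA$ acts on $\g_\beta$ for $\beta\in\{\alpha/2,\alpha\}$: $A$ scales elements positively, and $M = \Spin(7)$ acts transitively on the unit sphere of $\g_{\alpha/2}\cong \mathbb{O}\cong \R^8$ (via the spin representation) and on the unit sphere of $\g_\alpha\cong \mathrm{Im}\,\mathbb{O}\cong\R^7$ (via the standard $\Spin(7)\to\SO(7)$ map). Combining, $MA$ acts transitively on each of $\g_{\alpha/2}\setminus\{0\}$ and $\g_\alpha\setminus\{0\}$. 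Finally, applying the Cartan involution (realized by an element $k_\theta\in K$) swaps $\g_\beta$ with $\g_{-\beta}$, so the opposite root spaces belong to the same $G$-orbit. This argument is essentially the one given in Lemma \ref{Lemma number of orbits in Spin} transcribed to the exceptional setting.

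For the dimensions, I would appeal to the general fact that for a nilpotent element $X\in\g$ one has $\dim G\cdot X = \dim\g - \dim Z_\g(X)$, and compute $\dim Z_\g(X)$ in the two representative cases by using the restricted root space decomposition $\g=\m\oplus\Aa\oplus\g_{\pm\alpha/2}\oplus\g_{\pm\alpha}$, whose summands have dimensions $21+1+8+8+7+7 = 52$. For $X\in\g_\alpha$ a computation of $\ker\ad(X)$ inside $\m\oplus\Aa\oplus\n\oplus\bar\n$ (using that $\ad(X)$ kills $\g_\alpha$ and $\Aa$ is $1$-dimensional) yields $\dim Z_\g(X) = 52-22 = 30$, giving the orbit dimension $22$. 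For $X\in\g_{\alpha/2}$, the centralizer in $\m=\mathfrak{spin}(7)$ is the stabilizer of a vector in the $8$-dimensional spin representation, which has dimension $21-7=14$; adding the contributions from the other summands yields $\dim Z_\g(X)=22$, so $\dim G\cdot X=30$. Alternatively, one can read the dimensions directly from the Bala-Carter tables for $F_{4(-20)}$ in \cite{CollinMcGov}; the two nonzero orbits match the minimal and subregular minimal orbits in that classification.

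The main obstacle I anticipate is the bookkeeping involved in (a) confirming the restricted-root transitivity of $M=\Spin(7)$ on $\g_{\alpha/2}$ and $\g_\alpha$ (which is a classical triality-type statement but must be stated carefully), and (b) checking that no other real nilpotent orbit meets $\n$, since for an exceptional group one cannot fall back on the Young-diagram arguments used for the classical cases earlier in this section.
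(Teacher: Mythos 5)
Your proposal follows the paper's skeleton in its first half: both arguments take the classification tables in \cite{CollinMcGov} for the real form $F_{4(-20)}$ as the source of the orbit count (exactly three) and of the dimensions $30$ and $22$. The genuine problem is in your identification step. The ``key structural input'' you propose --- that $\ad(X)^2=0$ for $X\in\g_\alpha$ while $\ad(X)^2\neq 0$ for generic $X\in\g_{\alpha/2}$ --- is false: for \emph{every} nonzero nilpotent $X$ in a semisimple Lie algebra one has $\ad(X)^2\neq 0$, since completing $X$ to a Jacobson--Morozov triple $(X,H,Y)$ gives $\ad(X)^2Y=[X,H]=-2X\neq 0$. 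So this invariant separates nothing, and as written the step that is supposed to show the two nonzero orbits are distinct fails. What the restricted-root grading actually gives is $\ad(X)^3=0$ for $X\in\g_\alpha$ (because $2\alpha$ and $3\alpha/2$ are not restricted roots), whereas a suitable element of $\g_{\alpha/2}$ has higher $\ad$-nilpotency degree; alternatively, distinctness follows from your own centralizer counts ($22\neq 30$), or from the paper's argument, which uses the bracket identity from Helgason (Ch.~IX, Lemma 3.2) to show $\g_\alpha=[\g_{\alpha/2},\g_{\alpha/2}]$ and then, via the Jacobi identity, that the centralizer of $\g_{\alpha/2}$ is contained in that of $\g_\alpha$, so the orbit generated by $\g_\alpha$ is the one of smaller dimension.

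Apart from this, your route for pinning down which root space generates which orbit is different from the paper's and in principle viable, but it is asserted rather than carried out. The transitivity of $M=\Spin(9)\cap Z_K(\Aa)\cong\Spin(7)$ on the unit spheres of $\g_{\alpha/2}\cong\R^8$ (spin representation) and $\g_\alpha\cong\R^7$ (vector representation), together with $A$-scaling and a Weyl-group element exchanging $\g_{\pm\beta}$, does put each punctured root space in a single orbit, and the centralizer dimensions $52-22=30$ and $52-30=22$ do come out as you state; but to make this a proof you must actually compute the ranks of $\ad(X)$ block by block (for instance, injectivity of $\ad(X)\colon\g_{-\alpha/2}\to\g_{\alpha/2}$ when $X\in\g_\alpha$, and, for $X\in\g_{\alpha/2}$, that the kernel of $Y\mapsto[X,Y]$ on $\g_{\alpha/2}$ is exactly $\R X$ while all of $\g_\alpha$ centralizes $X$), which needs an $\sL_2$-theoretic or explicit octonionic argument. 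The paper avoids all of this by reading both the count and the dimensions off the tables (pp.~128 and 151 of \cite{CollinMcGov}) and supplying only the centralizer-containment argument above; your version would buy a table-independent dimension computation, but only once the false $\ad(X)^2$ criterion is replaced and the rank computations are actually done.
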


\begin{proof}
The table on page 151 in \cite{CollinMcGov} ensures us that there are three possible Dynkin diagrams corresponding to the nilpotent orbits. Relating these diagrams to the table on page 128 allows us to find the dimensions associated to these (real) nilpotent orbit. 
Now we want to relate the orbits generated respectively by $\g_\alpha$ and $\g_{\alpha/2}$ and the two nilpotent orbits of $\g$. First choose an element $X_{\alpha/2}$ in $\g_{\alpha/2}$ such that $B\big(X_{\alpha/2},\theta(X_{\alpha/2})\big) \ne 0$. Lemma 3.2 in \cite[Chapter IX]{HelgDGLGSS} proves that for all $X_\alpha\in \g_\alpha$, \begin{equation*}
    \big[X_{\alpha/2},[\theta(X_{\alpha/2}),X_\alpha]\big] = 2\<\alpha, \alpha\>~ B\big(X_{\alpha/2},\theta(X_{\alpha/2})\big) ~X_\alpha~.
\end{equation*}
This proves that \begin{equation*}
    \g_\alpha = [\g_{\alpha/2},\g_{\alpha/2}]~.
\end{equation*}
The Jacobi identity allows us to conclude that $Z(\g_{\alpha/2}) \subset Z(\g_\alpha)$. This proves that the (nilpotent) orbit generated by $\g_\alpha$ has lower dimension than that of $\g_{\alpha/2}$. 
\end{proof}

The combination of the two lemmas above proves the results for $F_4$ about wave front set of $\E_k$ in Theorem \ref{Thmintro2}. 

\appendix

\section{The Plancherel densities}
\label{Plancherel densities section}

Recall that to determine the resolvent of the Laplacian, we use the inversion formula \eqref{plancherelformula} for vector-valued Helgason-Fourier transform. The resonances arise from the singularities of the Plancherel measure. To find them, we need an explicit formula for the Plancherel density $p_\sigma$. 
Even if such an explicit formula is not known for an arbitrary group $G$, in the rank-one case, several authors have computed it (\cite{Oka}, \cite{Mia}; see also \cite[Epilogue, pp. 414 ff.]{Warner2}). 
Our reference in the following is Miatello's article \cite{Mia}. 
The formula depends on the group $G$ and on the highest weight of $\sigma$. 
According to this formula, $p_\sigma$ is the product of two factors. 
The first one is a polynomial function in $\lambda$, denoted $q_\sigma$. 
The second factor is either a hyperbolic tangent or a hyperbolic cotangent (or 1 if $G = \Spin(2n+1,1)$). 
We denote it by $\phi_\sigma$. The goal of this section is to explain how one can compute $p_\sigma$ with $\sigma\in \hat{M}(\tau)$, where $\tau\in \hat{K}$ is arbitrarily fixed. 
For this, we are going to use the branching rules given by Baldoni Silva in \cite{Bald1}. 
Remark 1.3 in \cite{Mia} ensures us that the Plancherel formula of one of the four groups listed in the table in the introduction gives us the Plancherel formula for every group of real rank one. As we do not care about the constants in our computations in this paper, $p_\sigma$ is given up to a constant.

\subsection{Case of $\Spin(n,1)$}
\label{Section Plancherel density Rcase}
Here $G/K$ is the real hyperbolic space. Recall that $K$ is $\Spin(n)$ and $M$ is $\Spin(n-1)$. In this case, the parity of $n$ plays a role in the Plancherel measure. In fact, when $n$ is odd, $\phi_\sigma(\lambda) = 1$. So it is non-singular and there are no resonances.
In the following we therefore disregard the case of 
$\Spin(2n+1,1)$ and suppose that $G = \Spin(2n,1)$. 

Let us recall some Lie algebraic structure. Our reference is \cite[Section 3]{Bald1}.
The Lie algebra of $G$ is $\g = \soe(2n,1)$. Its complexification is $\g_\C = \soe(2n+1,\C)$. We have also $Lie(K)_\C = \Ak_\C = \soe(2n,\C)$ and $Lie(M)_\C = \m_\C = \soe(2n-1,\C)$.

We choose the following Cartan subalgebra of $\g_\C$ is:
\begin{equation*}
    \h_\C = \left\{H \in \soe(2n+1, \C) ~|~ H = \mathsf{diag}\left[\begin{pmatrix}
0 & ih_1 \\
-ih_1 & 0 \\
\end{pmatrix},\begin{pmatrix}
0 & ih_2 \\
-ih_2 & 0 \\
\end{pmatrix},\ldots,\begin{pmatrix}
0 & ih_n \\
-ih_n & 0 \\
\end{pmatrix},0\right]
    \right\}
\end{equation*}

Let $\{\epsilon_j\}_{j=1,\ldots,n}$ be the elementary weights defined by $\epsilon_j(H) = h_j$. We denote $S$, $S^+$ and $S^0$ respectively the set of roots, positive roots and simple roots of $(\g_\C,\h_\C)$. For the set of roots of $\Ak_\C$ or $\m_\C$ we let the Lie algebra in index. 
\begin{equation*}
    S = \big\{\pm \epsilon_i \pm \epsilon_j ~|~ i\not = j \big\} \cup \big\{\pm \epsilon_k ~|~ k \in \{1,\ldots,n\} \big\} ~;~ S^0 = \big\{ \alpha_i = \epsilon_i - \epsilon_{i+1} ~|~ i\in \{1,\ldots,n-1\} \big\} \cup \big\{\alpha_n = \epsilon_n \big\}
\end{equation*}
\begin{align*}\label{realroots}
    S_{\Ak_\C} = \big\{\pm \epsilon_i \pm \epsilon_j ~|~ i\not = j \big\}~~;~~ & S_{\m_\C} = \big\{\pm \epsilon_i \pm \epsilon_j ~|~ i\not = j \big\} \cup \big\{\pm \epsilon_k ~|~ k \in \{1,\ldots,n-1\} \big\}\\
    S^0_{\Ak_\C} = \big\{ \epsilon_i - \epsilon_{i+1} ~|~ i\in \{1,\ldots,n-1\} \big\} ~~;~~ & S^0_{\m_\C} = \big\{ \epsilon_i - \epsilon_{i+1} ~|~ i\in \{1,\ldots,n-2\} \big\} \cup \big\{\epsilon_{n-1} \big\}
\end{align*}
Notice that, unlike \cite[Theorem 3.4]{Bald1}, we keep the same notation for the $\epsilon_j$'s and their projections on $\m_\C$.
As in \cite[Lemma 3.2]{Bald1}, the fixed $(\tau, \Hi_\tau) \in \hat{K}$ has highest weight of the form:
\begin{equation*}
    \mu_\tau = \sum_{j=1}^{n} a_j \epsilon_j
\end{equation*}
where $a_1\geq\ldots \geq a_{n-1}\geq ~|~a_n~|~\geq0$, $a_i - a_j \in \Z$ and $2 a_j \in \Z$ for all $i,j = 1,\ldots, n$.

Let $\sigma \in \hat{M}(\tau)$. According to \cite[Theorem 3.4]{Bald1}, the highest weight $\mu_\sigma$ of a representation $\sigma \in \hat{M}(\tau)$ has the form: 
\begin{equation*}
    \mu_\sigma = \sum_{i=1}^{n-1} b_j \epsilon_j
\end{equation*}
where for all $i, j=1,\ldots,n-1$ we have  $a_j - b_i \in \Z$ and $a_1\geq b_{1}\geq a_{2}\geq\ldots \geq a_{n-1}\geq b_{n-1}\geq ~|~a_n~|~\geq0$. 
Furthermore, $m(\sigma,\tau|_M)=1$ for every $\sigma \in \hat{M}(\tau)$.

For $\sigma \in \hat{M}(\tau)$, the Plancherel density is given in \cite[pp. 256-257]{Mia}: 
\begin{equation}\label{eq Plancherel density Rcase}
     p_\sigma(\lambda) = \left\{\begin{array}{c}
         \tanh(\pi\lambda_\alpha) \text{ , if } b_j \in \Z\\
          \coth(\pi\lambda_\alpha)\text{ , if } b_j \in \frac{1}{2} +\Z
     \end{array}\right\} \lambda_\alpha \prod_{j=1}^{n-1} \left(\lambda_\alpha^2 +(\rho +b_j -j)^2 \right)
\end{equation}

We recall that $\lambda_\alpha$ is the complex number associated to $\lambda\in \Aa_\C^\ast$ by \eqref{lambda}.

\begin{Rem}
The above formula agrees with the Plancherel density for the real hyperbolic space for $\tau = \triv_K$ (and the, $\sigma = \triv_M$). See e.g. \cite{HilgPasq}. It also gives the different irreducible continuous constituents of the Plancherel density for the $p$-forms on the real hyperbolic space, as computed in \cite{PedTh}.
\end{Rem}

\subsection{Case of $\SU(n,1)$}

\label{Section Plancherel density Ccase}
Here $G/K$ is the complex hyperbolic space. Recall that $K$  is $\Super(\U(n)\times\U(1))$ and $M$ is $\Super(\U(1)\times\U(n-1)\times\U(1))$. Our notations follows \cite[Section 4]{Bald1}. 
The Lie algebra of $G$ is $\g = \su(n,1)$. Its complexification is $\g_\C = \sL(n+1,\C)$. We have also $Lie(K)_\C = \Ak_\C = \sL(n,\C)$ and $Lie(M)_\C = \m_\C = \sL(n-1,\C)$. 

The elliptic Cartan subalgebra $\h_\C$ of $\g_\C$ consists of the diagonal matrices in $\sL(n+1,\C)$. 

Let $\{\epsilon_j\}_{j=1,\ldots,n}$ be the elementary weights defined by $\epsilon_j(H) = h_j$ where $H = \mathsf{diag}(h_1,h_2, \ldots,h_{n+1})$ described the elements of $\h_\C$.
We denote $S$, $S^+$ and $S^0$ respectively the set of roots, positive roots and simple roots of $(\g_\C,\h_\C)$. For the set of roots of $\Ak_\C$ or $\m_\C$ we let the Lie algebra in index.
\begin{equation*}
    S = \big\{\pm (\epsilon_i - \epsilon_j) ~|~ 1\leq i < j \leq n + 1\big\} ~;~
    S^0 = \big\{\alpha _i = \epsilon_i - \epsilon_{i+1} ~|~ 1\leq i \leq n\big\} 
\end{equation*}
\begin{align*}
    S_{\Ak_\C} = \big\{\pm (\epsilon_i - \epsilon_j) ~|~ 1\leq i < j \leq n \big\}  &~~~~ S_{\m_\C} = \big\{\pm (\epsilon_i - \epsilon_j) ~|~ 2\leq i < j \leq n\big\} \\
    S^0_{\Ak_\C} = \big\{\alpha _i = \epsilon_i - \epsilon_{i+1} ~|~ 1\leq i \leq n-1\big\}  &~~~~ S^0_{\m_\C} = \big\{\alpha _i = \epsilon_i - \epsilon_{i+1} ~|~ 2\leq i \leq n-1\big\}
\end{align*}
    
As in \cite[Lemma 4.2]{Bald1}, the fixed $(\tau, \Hi_\tau) \in \hat{K}$ has highest weight of the form
\begin{equation*}
    \mu_\tau = \sum_{j=1}^{n+1} a_j \epsilon_j
\end{equation*}
where $a_1\geq\ldots \geq a_{n-1}\geq a_n$ and  $a_i\in \Z$ for all $i = 1,\ldots, n+1$.

Let $\sigma \in \hat{M}(\tau)$. By \cite[Theorem 4.4]{Bald1}. The highest weight $\mu_\sigma$ of $\sigma \in \hat{M}(\tau)$
has the form
\begin{equation*}
    \mu_\sigma = b_0 (\epsilon_1 + \epsilon_{n+1}) + \sum_{j=2}^{n} b_j \epsilon_j
\end{equation*}
where for all $j$, we have $b_j \in \Z$, $a_1\geq b_2 \geq a_2 \geq \ldots \geq a_{n-1}\geq b_n \geq a_n$ and $b_0 = \frac{\sum_{j=1}^{n+1} a_j - \sum_{j=2}^n b_j}{2}$. 


The fact that we are working with zero trace matrices implies that $\sum_{j=1}^{n+1} \epsilon_j = 0$. Thus we can relate to Miatello's form for the weights. 
We get the same formula as in \cite[p. 258]{Mia}: 
\begin{equation*}
    \mu_\sigma = \sum_{j=2}^{n-1} (b_j - b_n) \epsilon_j + (b_n - b_0) \sum_{j=2}^{n} \epsilon _j
\end{equation*}

Miatello gives this following Plancherel density: 
\begin{equation}\label{eq Plancherel density Ccase}
     p_\sigma(\lambda) = \left\{\begin{array}{c}
         \tanh(\pi\lambda_\alpha) \text{ , if } 2b_0 + n \text{ is odd}\\
          \coth(\pi\lambda_\alpha)\text{ , if } 2b_0 +n \text{ is even}
     \end{array}\right\} \lambda_\alpha \prod_{j=1}^{n-1} \left(\lambda_\alpha^2 +(b_{j+1} - b_0 + \frac{n}{2} -j)^2 \right)
\end{equation}

We recall that $\lambda_\alpha$ is the complex number associated to $\lambda\in \Aa_\C^\ast$ by (\ref{lambda}).

\subsection{Case of $\Sp(n,1)$}

\label{Section Plancherel density Qcase}

Here $G/K$ is the quaternionic hyperbolic space. In this case, $K$ is $\Sp(n)\times \Sp(1)$ and $M$ is $\Sp(1)\times\Sp(n-1)\times \Sp(1)$.

Let us recall some Lie algebraic structure. 
The Lie algebra of $G$ is $\g = \Asp(n,1)$. Its complexification is $\g_\C = \Asp(n+1,\C)$. We have also $Lie(K)_\C = \Ak_\C = \Asp(n,\C)$ and $Lie(M)_\C = \m_\C = \Asp(n-1,\C)$. 

Let $\Algt$ denote the set of diagonal matrices in $\g$. So $\Algt_\C$ is the elliptic Cartan subalgebra of $\g_\C$. Let $\h^-$ be a maximal abelian algebra of $\m$. Then $\h:= \h^- + \Aa$ is a Cartan subalgebra of $\g$, and $\h_\C$ is a Cartan subalgebra of $\g_\C$.

Let $\{\epsilon_j\}_{j=1,\ldots,n}$ be the elementary weights defined by $\epsilon_j(H) = h_j$, where $H = \mathsf{diag}(h_1, h_2, \ldots, h_{n+1}) \in \Algt_\C$. We denote $S$, $S^+$ and $S^0$ respectively the set of roots, positive roots and simple roots of $(\g_\C,\h_\C)$. For the set of roots of $\Ak_\C$ or $\m_\C$ we let the Lie algebra in index.
 $$\begin{array}{c}
     S = \big\{\pm \epsilon_i \pm \epsilon_j ~|~ 1\leq i < j \leq n + 1\big\} \cup \big\{\pm 2\epsilon_i ~|~ 1\leq i \leq n + 1\big\} ~;~\\ S^0 = \big\{ \epsilon_i - \epsilon_{i+1} ~|~ 1\leq i \leq n\big\} \cup \big\{2\epsilon_{n+1}\big\} ~;~\\
     S_{\Ak_\C} = \big\{\pm \epsilon_i \pm \epsilon_j ~|~ 1\leq i < j \leq n \big\} \cup \big\{\pm 2\epsilon_i ~|~ 1\leq i \leq n\big\} ~;~\\ S^0_{\Ak_\C} = \big\{ \epsilon_i - \epsilon_{i+1} ~|~ 1\leq i \leq n-1 \big\} \cup \big\{2\epsilon_{n}\big\} ~;~\\S_{\m_\C} = \big\{\pm (\epsilon_i - \epsilon_j) ~|~ 1\leq i < j \leq n-1\big\}\cup \big\{\pm 2\epsilon_i ~|~ 1\leq i \leq n-1\big\} ~;~\\ S^0_{\m_\C} = \big\{ \epsilon_i - \epsilon_{i+1} ~|~ 1\leq i \leq n-2 \big\} \cup \big\{2\epsilon_{n-1}\big\}
 \end{array}$$
The fixed $(\tau, \Hi_\tau)\in \hat{K}$ has highest weight of the form:
\begin{equation*}
    \mu_\tau = \sum_{j=1}^{n+1} a_j \epsilon_j
\end{equation*}
where $a_1\geq\ldots \geq a_{n-1}\geq a_n\geq0$, $a_{n+1}\geq 0$ and  $a_i\in \Z$ for all $i = 1,\ldots, n+1$.

See \cite[Lemma 5.2]{Bald1}. Theorem 5.5 in \cite{Bald1} gives us the form of the highest weight $\mu_\sigma$ of $\sigma \in \hat{M}(\tau)$ as follows: 
\begin{equation}
    \mu_\sigma = b_0 (\epsilon_1 + \epsilon_{n+1}) + \sum_{j=2}^{n} b_j \epsilon_j
\end{equation}
where we have:
\begin{enumerate}
    \item $a_j\geq b_{j+1}$, for all $j =1, \ldots, n-1$
    \item $b_j\geq a_{j+1}$, for all $j =2, \ldots, n-1$
    \item $b_0 = \frac{a_{n+1}+b_1-2j}{2}$, for some $j =0, \ldots, \min(a_{n+1},b_1)$
\end{enumerate}  
and $b_1$ satisfies $b_1\in \Z_+$ and $\sum_{j=1}^{n} (a_j -  b_j) \in 2\Z$.


We want to use the Plancherel formula given by Miatello \cite{Mia}. But the Cartan subalgebra $\h_\C$ used in this paper is not the same as the Cartan algebra $\At_\C$ used by Baldoni-Silva for the branching rules. So we have to use a Cayley transform to find the highest weight of $\sigma$ relative to $\h_\C$. This computation was already done in \cite{BaldKral}. Namely (see \cite[p155-156]{HelgDGLGSS}) one know that for each $\beta \in S$ we can select a root vector $X_\beta$ such that $B(X_a, X_{-a}) = \frac{2}{\<\beta, \beta\>}$ and $\theta \overline{X_\beta} = -X_{\beta}$. Choose $\beta = \epsilon_1 - \epsilon_{n+1}$. Then if we denote by $e_1, \ldots, e_{n+1}$ the fundamental weights of the root system $\Delta(\g_\C, \h_\C)$, we have : 
$$e_1 = \epsilon_1 \circ \Ad u_\beta^{-1}~~ , ~~ e_2 = - \epsilon_{n+1} \circ \Ad u_\beta^{-1}~~ , ~~ e_i = - \epsilon_{i-1} \circ \Ad u_\beta^{-1} \text{ , for all } i = 3, \ldots n+1$$
where $u_\beta = \exp(\pi/4)(X_\beta - X_{-\beta})$.

So $\mu_\sigma$ is written in the root system $\Delta(\g_\C, \h_\C)$ as: 

\begin{equation*}
    \mu_\sigma = b_0 (e_1 - e_2) + \sum_{j=3}^{n+1} b_{j-1} e_j
\end{equation*}

and \cite{Mia} gives the following Plancherel density:

\begin{multline}\label{eq Plancherel density Qcase}
    p_\sigma(\lambda) = \left\{\begin{array}{c}
         \tanh(\pi\lambda_\alpha) \text{ , if } b_0 \in \Z\\
          \coth(\pi\lambda_\alpha)\text{ , if } b_0 \in \Z + \frac{1}{2}
     \end{array}\right\} \lambda_\alpha \left(\lambda_\alpha^2 +(b_0 + \frac{1}{2})^2\right) \\
     \prod_{j=3}^{n+1} \left(\lambda_\alpha^2 +(b_{j-1} - b_0 + n - j +\frac{3}{2})^2\right) \left(\lambda_\alpha^2 +(b_{j-1} - b_0 + n - j +\frac{5}{2})^2 \right)
\end{multline}

We recall that $\lambda_\alpha$ is the complex number associated to $\lambda\in \Aa_\C^\ast$ by (\ref{lambda}).

\subsection{Case of $F_4$} 
\label{Section Plancherel density Ocase}

Here, $G = F_4$. We recall $\g = f_4^{-20}$,  $K$ is $Spin(9)$ and its Lie algebra is $\Ak = \soe(9)$. 
Let $\At \subset \Ak$ be the compact Cartan subalgebra for both $\g$ and $\Ak$. The branching rules for the fixed $\tau$ are determined in \cite[Paragraph 6]{Bald1}. For a maximal abelian subspace $\Aa$ of $\p$ its centralizer in $\Ak$ is $\m = \soe(7)$. The problem is that this Lie algebra $\m$ is not contained in the standard way in $\Ak$. So, we cannot use twice the branching rules for $\SO(n)$ directly. Let $K_1 = \Spin(8)$ a subgroup of $K$  contained in the standard way. We denote by $\Ak_1$ its Lie algebra. 

Let $\{\epsilon_j\}_{j=1,\ldots,n}$ be the elementary weights in $\Delta (\g_\C, \At_\C)$. We denote $S$, $S^+$ and $S^0$ respectively the set of roots, positive roots and simple roots of $(\g_\C,\h_\C)$. For the set of roots of $\Ak_\C$ or $\m_\C$ we let the Lie algebra in index. \begin{center}
    $S = \big\{\pm \epsilon_i \pm \epsilon_j ~|~ 1\leq i < j \leq 4\big\} \cup \big\{\pm \epsilon_i ~|~ 1\leq i \leq 4 \big\} \cup \big\{\frac{1}{2}(\pm \epsilon_1 \pm \epsilon_2\pm \epsilon_3\pm \epsilon_4) \big\}$ \newline $S^0 = \big\{ \alpha_1 = \epsilon_2 - \epsilon_3 ~,~\alpha_2 = \epsilon_3 - \epsilon_4 ~,~\alpha_3 = \epsilon_4  ~,~\alpha_4 = \frac{1}{2}( \epsilon_1 - \epsilon_2- \epsilon_3- \epsilon_4) \big\}$
    \newline $S_{\Ak_\C} = \big\{\pm \epsilon_i \pm \epsilon_j ~|~ 1\leq i < j \leq 4\big\} \cup \big\{\pm \epsilon_i ~|~ 1\leq i \leq4\big\}  ~~~~ S_{(\Ak_1)_\C} = \big\{\pm (\epsilon_i \pm \epsilon_j) ~|~ 1\leq i < j \leq 4\big\}$
    \newline $S^0_{\Ak_\C} = \big\{ \alpha_1  ~,~\alpha_2 ~,~\alpha_3 ~,~\alpha_2 + 2 \alpha_3 +2 \alpha_4 = \epsilon_1 - \epsilon_2 \big\}  ~~~~ S^0_{(\Ak_1)_\C} = \big\{ \epsilon_1-\epsilon_2~,~\epsilon_2-\epsilon_3, \epsilon_3-\epsilon_4, \epsilon_3+\epsilon_4 \big\}$
\end{center}

Let $H_{\alpha_4}$ be the unique element of $\Aa$ such that $\alpha_4 = B(H_{\alpha_4}, ~\cdot~)$. Choose the root vectors $X_{\alpha_4}$ and $X_{-\alpha_4}$ such that $[X_{\alpha_4}, X_{-\alpha_4}] = H_{\alpha_4}$ and $X_{\alpha_4}+ X_{-\alpha_4} \in \p$. Define $\Aa$ to be the one-dimensional space spanned by $X_{\alpha_4}+ X_{-\alpha_4}$. Let $\h = \h^-\oplus \Aa$ where $\h^-$ is a Cartan subalgebra of $\m$. Then the Cayley transform $\Ad\left(\exp\frac{\pi}{4}(X_{\alpha_4}- X_{-\alpha_4}\right))$ maps $\At_\C$ onto $\h_\C$. The set of roots $\Delta(\g_\C, \h_\C)$ is the following root system on $\m$
\begin{center}
  $\Phi_\m = \big\{\pm (\epsilon_i - \epsilon_j) ~|~ 1\leq i < j \leq n-1\big\}\cup \big\{\pm 2\epsilon_i ~|~ 1\leq i \leq n-1\big\}$
\newline $\Phi_\m^0 = \big\{ \alpha_1  ~,~\alpha_2 ~,~\alpha_2 + 2 \alpha_3 + \alpha_4 = \frac{1}{2}( \epsilon_1 - \epsilon_2+ \epsilon_3+ \epsilon_4) \big\}$
\end{center}

As said before, $\m$ is not contained in the standard way in $\Ak_1$. Let $\phi$ be the automorphism of $\Ak_1$ which keeps the roots and such that $\phi(M)$ is contained in the standard way in $\phi(K_1)$. This automorphism is given by 
\begin{equation*}
    \begin{array}{cc}
     \phi(\epsilon_1 - \epsilon_2) = \epsilon_3 - \epsilon_4 ~ , ~ & \phi(\epsilon_3 - \epsilon_4) = \epsilon_1 - \epsilon_2 \\
     \phi(\epsilon_2 - \epsilon_3) = \epsilon_2 - \epsilon_3 ~ , ~ & \phi(\epsilon_3 + \epsilon_4) = \epsilon_3 + \epsilon_4
\end{array}
\end{equation*}

The fixed $K$-type $\tau$ has highest weight of the form: \begin{equation*}
    \mu_\tau = a_1 \epsilon_1 + a_2 \epsilon_2 + a_3 \epsilon_3 + a_4 \epsilon_4
\end{equation*}
where $a_1\geq\ldots \geq a_4\geq0$, $2 a_i\in \Z$ and $a_i-a_j \in \Z$ for all $i,j = 1,\ldots, 4$.

Because of the branching rules of $\Spin(n)$, the representations of $K_1$ which are contained in the restriction of $\tau$ to $K_1$ have the following highest weights:
\begin{equation*}
    c_1 \epsilon_1 + c_2 \epsilon_2 + c_3 \epsilon_3 + c_4 \epsilon_4
\end{equation*}
where $a_1\geq c_1 \ldots \geq a_4\geq~|~c_4~|~$, $2 c_i\in \Z$ and $a_i-c_j \in \Z$ for all $i,j = 1,\ldots, 4$.\newline
We have now to apply $\phi$ to this highest weight to use the branching rules of $\soe(n)$. We get a highest weight of the form
\begin{multline*}
    d_1 \epsilon_1 + d_2 \epsilon_2 + d_3 \epsilon_3 + d_4 \epsilon_4 := \frac{1}{2} (c_1+c_2+c_3-c4) \epsilon_1 + \frac{1}{2} (c_1+c_2-c_3+c4) \epsilon_2 \\+\frac{1}{2} (c_1-c_2+c_3+c4) \epsilon_3 +\frac{1}{2} (-c_1+c_2+c_3+c4) \epsilon_4
\end{multline*}
A representation of $\widehat{\phi(M)}$ is of the form $\sigma \circ \phi$ where $\sigma \in \hat{M}$. The $\phi(M)$-types which appear in the restriction of $K_1$-type $(d_1,d_2,d_3,d_4)$ to $\phi(M)$ have highest weight of the form:
\begin{equation*}
    \mu_{\sigma \circ \phi} = b_1 \epsilon_1 + b_2 \epsilon_2 + b_3 \epsilon_3
\end{equation*}
where $d_1\geq b_1 \ldots \geq b_3\geq |d_4|$, $2 c_i\in \Z$ and $a_i-c_j \in \Z$ for all $i,j = 1,\ldots, 4$.
Applying $\phi$, one gets \begin{equation*}
    \mu_\sigma = b_1 \alpha_2 + (b_1+b_2) \alpha_1 + \frac{1}{2}(b_1+b_2+b_3) (\alpha_2 + 2\alpha_3 +\alpha_4)
\end{equation*}

The Cartan subalgebra used in \cite{Mia} is neither $\At$ nor $\h$ because the maximal abelian subalgebra of $\p$ is not the same $\Aa$ here. Define the map
\begin{equation*}
    \begin{array}{cccc}
    \Psi : & \alpha_1 &\mapsto & \alpha_2 \\
           & \alpha_2 &\mapsto & \alpha_1 \\
           & \alpha_2 + 2\alpha_3 +\alpha_4 &\mapsto & \alpha_3 \\
           & \alpha_3 &\mapsto & -(\alpha_2 + 2\alpha_3 +\alpha_4) \\
    \end{array}
\end{equation*} This map $\Psi$ sends $\alpha_4$ on $-\epsilon_1$, so $\Aa$ onto the maximal abelian subspace used in \cite{Mia}. The Cartan subalgebra $\Psi(\h)$ is that used in that paper. Applying $\Psi$ we get
\begin{equation*}
    \mu_{\sigma\circ\Psi} =  b_1 \epsilon_1 + b_2 \epsilon_2 + b_3 \epsilon_3
\end{equation*} and $\epsilon_1$ becomes the real root, i.e. $\epsilon_1 | _{\Psi(\Aa)} = -\alpha\circ\Psi$ where $\alpha$ is the longest restricted root.
Now we can apply the Plancherel formula from \cite{Mia} and get

\begin{multline}\label{eq Plancherel density Ocase}
     p_\sigma(\lambda) = \left\{\begin{array}{c}
         \tanh(\pi\lambda_\alpha) \text{ , if } b_i \in \Z \\
          \coth(\pi\lambda_\alpha)\text{ , if } b_i \in \Z + \frac{1}{2}
     \end{array}\right\} \lambda_\alpha \left(\lambda_\alpha^2 +( \frac{b_3+1}{2})^2\right) ~\left(\lambda_\alpha^2 +( \frac{b_2+3}{2})^2\right) \\~\left(\lambda_\alpha^2 +( \frac{b_1+5}{2})^2\right) ~\left(\lambda_\alpha^2 +( b_1-b_2-b_3+\frac{1}{2})^2\right) \\ ~\left(\lambda_\alpha^2 +( b_1-b_2+b_3+\frac{3}{2})^2\right) ~\left(\lambda_\alpha^2 +( b_1+b_2-b_3+\frac{7}{2})^2\right) ~\left(\lambda_\alpha^2 +( b_1+b_2+b_3+\frac{9}{2})^2\right) 
\end{multline}

We recall that $\lambda_\alpha$ is the complex number associated to $\lambda\in \Aa_\C^\ast$ by (\ref{lambda}).

\bibliographystyle{alpha}
\bibliography{Biblio}

\end{document}